\DeclareFontFamily{OT1}{pzc}{}
\DeclareFontShape{OT1}{pzc}{m}{it}{<-> s * [1.10] pzcmi7t}{}
\DeclareMathAlphabet{\mathpzc}{OT1}{pzc}{m}{it}
\newcommand{\kibitz}[2]{\ifnum\Comments=1\textcolor{#1}{#2}\fi}
\newtheoremstyle{normal}% name
{2ex}               % Space above, empty = `usual value'
{3ex}               % Space below
{}                  % Body font
{}                  % Indent amount (empty = no indent, \parindent = para indent)
\newtheoremstyle{italic}% name
{2ex}%      Space above, empty = `usual value'
{3ex}%      Space below
{\itshape}% Body font
{}%         Indent amount (empty = no indent, \parindent = para indent)
\theoremstyle{normal}
\newtheorem{definition}{Definition}[section]
\newtheorem{example}[definition]{Example}
\newtheorem{condition}[definition]{Condition}
\theoremstyle{italic}
\newtheorem{theorem}[definition]{Theorem}
\newtheorem{lemma}[definition]{Lemma}
\newtheorem{proposition}[definition]{Proposition}
\newcommand\N{\mathbb{N}}
\newcommand\R{\mathbb{R}}
\begin{document}

\title{Testing for simultaneous jumps in case of asynchronous observations}

\author{Ole Martin and  Mathias Vetter\thanks{Christian-Albrechts-Universit\"at zu Kiel, Mathematisches Seminar, Ludewig-Meyn-Str.\ 4, 24118 Kiel, Germany.
{E-mail:} martin@math.uni-kiel.de/vetter@math.uni-kiel.de} \bigskip \\
{Christian-Albrechts-Universit\"at zu Kiel}
}

\maketitle

\begin{abstract}
This paper proposes a novel test for simultaneous jumps in a bivariate It\^o semimartingale when observation times are asynchronous and irregular. Inference is built on a realized correlation coefficient for the jumps of the two processes which is estimated using bivariate power variations of Hayashi-Yoshida type without an additional synchronization step. An associated central limit theorem is shown whose asymptotic distribution is assessed using a bootstrap procedure. Simulations show that the test works remarkably well in comparison with the much simpler case of regular observations.  
\end{abstract}

%\medskip

 \textit{Keywords and Phrases:} Asynchronous observations; common jumps; \\high-frequency statistics; It\^o semimartingale; stable convergence

\smallskip

 \textit{AMS Subject Classification:} 62G10, 62M05 (primary); 60J60, 60J75 (secondary)

%\tableofcontents

\section{Introduction}
\def\theequation{1.\arabic{equation}}
\setcounter{equation}{0}

Understanding the jump behaviour of a continuous time process is of importance in econometrics, as many decisions in finance are based on knowledge of the path properties of the underlying asset prices. For this reason, a large amount of research over the last decade was concerned with the estimation of certain jump characteristics or with the construction of tests regarding the existence and the nature of the jumps in the respective processes. Quite naturally, the focus was on the univariate setting for most cases, and we refer to the recent monographs \cite{JacPro12} and \cite{AitJac14} as well as to the references cited therein for a overview on statistical methods for (univariate) semimartingales observed in discrete time. 

On the other hand, when it comes to portfolio management and diversification issues there is a clear need for statistical methods which help deciding whether jumps in a specific asset are of idiosyncratic nature or are accompanied by jumps in other assets as well. Starting with \cite{barshe2006}, authors therefore have developed tests for simultaneous jumps in a multivariate framework, but these tests are typically based on the assumption that all components of the multivariate process can be observed synchronously and in a regular fashion. See for example \cite{JacTod09}, \cite{liaand2011} and \cite{mangob2012}. 

A remarkable exception is the test for co-jumps from  \cite{bibwin2015} which is designed for observations including additional noise and works in more general sampling schemes than just regular ones. We will refrain from adding noise in the sequel, but we will keep the focus on irregular observation schemes including asynchronicity in the data. Allowing for such models is much more realistic when it comes to practical applications, as even in the univariate setting observations do not come at equidistant times, and in the case of multivariate processes it is typically the case that not any observation of one component coincides with observations of all the others.  For this reason, there has always been some interest in the generalization of methods for regular sampling schemes to more realistic frameworks. This includes in particular the (simpler) case of continuous It\^o semimartingales. See for example \cite{hayetal2011} or \cite{mykzha2012} for the asymptotic properties of power variations in the univariate setting, or \cite{HayYos05} and \cite{HayYos08} on estimation of covariation for bivariate processes. 

Even more complicated is the situation when the underlying processes contain jumps. In this case, the (few) existing results have mostly focused on the univariate situation. Consistency results for certain power variations can be found in Chapter 3 of \cite{JacPro12}, but associated central limit theorems are only given in the case where jumps do not play a role asymptotically. On the other hand, \cite{BibVet15} provide a central limit theorem which involves non-trivial parts related to jumps, but only in the relatively simple case of realized volatility. 

The aim of the present work therefore is twofold: First, we extend results from \cite{JacTod09}, providing a feasible test for simultaneous jumps of a bivariate process $X =(X^{(1)},X^{(2)})$ over $[0,T],$ when observation times are asynchronous and irregular. As they discriminate between joint and disjoint jumps by estimating an empirical correlation coefficient for the two jump processes, namely 
\begin{align} \label{defPhi}
\Phi_T^{(d)} = \frac{\sum_{s \leq T} \big(\Delta X_s^{(1)} \big)^2 \big(\Delta X_s^{(2)} \big)^2}{\sqrt{\sum_{s \leq T} \big(\Delta X_s^{(1)} \big)^4} \sqrt{\sum_{s \leq T} \big(\Delta X_s^{(2)} \big)^4}}, 
\end{align}
we need an extension of the results from \cite{BibVet15} to a multidimensional framework in order to estimate $\Phi_T^{(d)}$ from irregular sampling schemes as well. Our technique here utilizes the heuristics behind the standard Hayashi-Yoshida estimator for realized covariation in order to identify joint jumps, and we believe that it is of independent interest as quantities such as $\Phi_T^{(d)}$ also play a central role in various other situations related to inference on jump processes. 

Second, under the null hypothesis of no joint jumps we provide an associated central limit theorem for our estimator of $\Phi_T^{(d)}$. As the limiting variable not only depends in a complicated way on the characteristics of $X$, but also on unknown variables which are due to the fine structure of the sampling scheme, we provide a bootstrap procedure in order to estimate critical values of our final test statistic. An extensive simulation study shows that our test has a similar finite sample behaviour as the standard test by \cite{JacTod09} when the (random) number of observations in both components equals on average the fixed number of observations in the simple regular case. This is remarkable when it comes to practical applications, as no additional synchronization step is necessary which inevitably causes a loss of data and therefore leads to a loss in effeciency. 

The remainder of the paper is organized as follows: Section \ref{sec:setting} deals with the formal setting in this work, and we introduce our estimator for $\Phi_T^{(d)}$ as well as minor assumptions under which consistency holds. In Section \ref{sec:clt} we need stronger conditions, as we are interested in the associated central limit theorem. The bootstrap procedure leading to the final test statistic is introduced in Section \ref{sec:test}, while its finite sample properties are investigated in Section \ref{sec:simul}. All proofs are gathered in the Appendix, which is Section \ref{sec:proof}.

\section{Setting and test statistic} \label{sec:setting}
\def\theequation{2.\arabic{equation}}
\setcounter{equation}{0}
Our goal in the sequel is to derive a statistical test based on high-frequency observations which allows to decide whether two processes do jump at a common time or not. We consider the following model for the process and the observation times: Let $X =(X^{(1)},X^{(2)})^*$ be a two-dimensional It\^o semimartingale on $(\Omega,\mathcal{F},\mathbb{P})$ of the form
\begin{multline}
\label{ItoSemimart}
X_t = X_0 + \int \limits_0^t b_s ds + \int \limits_0^t \sigma_s dW_s + \int \limits_0^t \int \limits_{\R^2} \delta(s,z)\mathds{1}_{\{\|\delta(s,z)\|\leq 1\}} (\mu - \nu)(ds,dz) \\
+ \int \limits_0^t \int \limits_{\R^2}  \delta(s,z) \mathds{1}_{\{\|\delta(s,z)\|> 1\}} \mu(ds,dz),
\end{multline}
where $W=(W^{(1)},W^{(2)})^*$ is a two-dimensional Brownian motion with covariation $d[W^{(1)},W^{(2)}]_t = \rho_t dt$, $\mu$ is a Poisson random measure on $\mathbb{R}^+ \times\mathbb{R}^2$, and its predictable compensator satisfies \mbox{$\nu(ds,dz)=ds \otimes \lambda(dz)$} for some $\sigma$-finite measure $\lambda$ on $\mathbb{R}^2$ endowed with the Borelian $\sigma$-algebra. $b$ is a two-dimensional adapted process,  $\sigma=\text{diag}(\sigma^{(1)}, \sigma^{(2)})$ is a $(2 \times 2)$-dimensional process and $\delta$ is a two-dimensional predictable process on $\Omega \times \mathbb{R}^+ \times \mathbb{R}^2$. $\sigma_s^{(1)}$, $\sigma_s^{(2)}$ and $\rho_s$ are all univariate adapted. We write $\Delta X_s=X_s-X_{s-}$ with $X_{s-}=\lim_{t \nearrow s} X_t$ for a possible jump of $X$ in $s$. 

The observation times are given by 
\[
\pi_n =\big\{\big(t_{i,n}^{(1)} \big)_{i \in \mathbb{N}_0},\big(t_{i,n}^{(2)} \big)_{i \in \mathbb{N}_0}  \big\}, \quad n \in \N,
\]
where $\big(t_{i,n}^{(l)}\big)_{i \in \mathbb{N}_0},~ l=1,2,$ are increasing sequences of stopping times with $t_{0,n}^{(l)}=0$. 
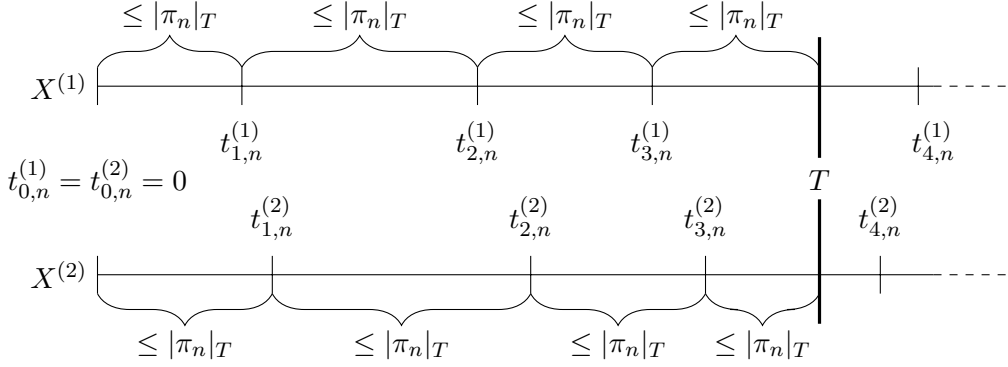
\begin{figure}[t]
\centering
\hspace{-0.6cm}
\begin{tikzpicture}
\draw (0,1.75) -- (11,1.75)
      (0,-0.75) -- (11,-0.75)
      (0,1.5) -- (0,2)
		(1.9,1.5) -- (1.9,2)
		(5,1.5) -- (5,2)
		(7.3,1.5) -- (7.3,2)
		
		(10.8,1.5) -- (10.8,2)
		(0,-0.5) -- (0,-1)
		(2.3,-0.5) -- (2.3,-1)
		(5.7,-0.5) -- (5.7,-1)
		(8,-0.5) -- (8,-1)
		
		(10.3,-0.5) -- (10.3,-1);
\draw[dashed] (11,1.75) -- (12,1.75)
      (11,-0.75) -- (12,-0.75);
\draw[very thick] (9.5,-1.4) -- (9.5,0.25)
      (9.5,0.8) -- (9.5,2.4);
\draw	(0,0.5) node{$t_{0,n}^{(1)}=t_{0,n}^{(2)}=0$}
		(1.9,1) node{$t_{1,n}^{(1)}$}
		(5,1) node{$t_{2,n}^{(1)}$}
		(7.3,1) node{$t_{3,n}^{(1)}$}
		(11,1) node{$t_{4,n}^{(1)}$}
		(9.5,0.5) node{\textbf{$T$}}
		(2.3,0) node{$t_{1,n}^{(2)}$}
		(5.7,0) node{$t_{2,n}^{(2)}$}
		(8,0) node{$t_{3,n}^{(2)}$}
		(10.3,0) node{$t_{4,n}^{(2)}$};
\draw   (0,1.75) node[left,xshift=-0pt]{$X^{(1)}$}
(0,-0.75) node[left,xshift=-0pt]{$X^{(2)}$};
\draw[decorate,decoration={brace,amplitude=12pt}]
	(0,2)--(1.9,2) node[midway, above,yshift=10pt,]{$\leq|\pi_n|_T$};\draw[decorate,decoration={brace,amplitude=12pt}]
	(1.9,2)--(5,2) node[midway, above,yshift=10pt,]{$\leq|\pi_n|_T$};\draw[decorate,decoration={brace,amplitude=12pt}]
	(5,2)--(7.3,2) node[midway, above,yshift=10pt,]{$\leq|\pi_n|_T$};\draw[decorate,decoration={brace,amplitude=12pt}]
	(7.3,2)--(9.5,2) node[midway, above,yshift=10pt,]{$\leq|\pi_n|_T$};	
\draw[decorate,decoration={brace,amplitude=12pt}]
	(9.5,-1)--(8,-1) node[midway, below,yshift=-10pt,]{$\leq|\pi_n|_T$};
	\draw[decorate,decoration={brace,amplitude=12pt}]
	(8,-1)--(5.7,-1) node[midway, below,yshift=-10pt,]{$\leq|\pi_n|_T$};
	\draw[decorate,decoration={brace,amplitude=12pt}]
	(5.7,-1)--(2.3,-1) node[midway, below,yshift=-10pt,]{$\leq|\pi_n|_T$};
	\draw[decorate,decoration={brace,amplitude=12pt}]
	(2.3,-1)--(0,-1) node[midway, below,yshift=-10pt,]{$\leq|\pi_n|_T$};
\end{tikzpicture}
\caption{A realization of the observation scheme $\pi_n$ restricted to $[0,T]$.}
\end{figure}
By
\[
|\pi_n|_T=\sup \big\{t_{i,n}^{(l)} \wedge T -t_{i-1,n}^{(l)} \wedge T \big|i\geq 1,~ l=1,2   \big\}
\]
we denote the mesh of the observation times up to $T$. Throughout the paper we use $n$ as an unobservable variable governing the observations and the asymptotics which does not appear in the statistics used later on. 

We introduce the following subsets of $\Omega$ to formalize the hypotheses:
\begin{equation*}
\begin{aligned}
    &\Omega_T^{(d)}= \{\omega \in \Omega :\exists s_1,s_2 \in [0,T] \text{ with } \Delta X_{s_1}^{(1)} \neq 0 \text{ and } \Delta X_{s_2}^{(2)} \neq 0, \\
    &~~~~~~~~~~~~~~~~~~~~~~~~~~~~~~~~~~~~~~~~~~~\text{but } \Delta X_s^{(1)}\Delta X_s^{(2)}=0 ~\forall s \in [0,T]     \}, \\
    &\Omega_T^{(j)}= \{\omega \in \Omega :\exists s \in [0,T] \text{ with } \Delta X_s^{(1)}\Delta X_s^{(2)}\neq 0   \}, \\
    &\Omega_T^{(c)}= \{\omega \in \Omega :\Delta X_{s}^{(1)}=0 ~\forall s \in [0,T]  \text{ or }  \Delta X_{s}^{(2)}=0 ~\forall s \in [0,T]\}.
\end{aligned}
\end{equation*}
Hence $\Omega_T^{(d)}$ is the set where $X^{(1)}$ and $X^{(2)}$ are both discontinuous on $[0,T]$ but do not jump together, $\Omega_T^{(j)}$ is the set where $X^{(1)}$ and $X^{(2)}$ have at least one common jump in $[0,T]$, and $\Omega_T^{(c)}$ is the set where at least one of the processes $X^{(1)}$ or $X^{(2)}$ is continuous on $[0,T]$. Our goal in this paper is to find a testing procedure for deciding whether an observation is from $\Omega_T^{(d)}$ or from $\Omega_T^{(j)}$. This means in particular that we focus on a specific path of $X$, and it might be the case that the underlying model allows for joint jumps but none of them occurs on the observed path up to time $T$. In such a case, the hypothesis of joint jumps should be rejected. Also, it is reasonable to apply a test for jumps in any of the processes (like the one from \cite{aitjac2009}) prior to the analysis, as one does not know a priori whether $\omega \in \Omega_T^{(c)}$ or not.  

All our test statistics are based on the increments
\[
\Delta_{i,n}^{(l)} X= X_{t_{i,n}^{(l)}}-X_{t_{i-1,n}^{(l)}}, \quad i \geq 1, \quad l=1,2,
\]
and we denote by $\mathcal{I}_{i,n}^{(l)}=\big(t_{i-1,n}^{(l)},t_{i,n}^{(l)}\big],~l=1,2,$ the corresponding observation intervals. For a function $f:\R^2 \rightarrow \R$ we set
\[
V(f,\pi_n)_T=\sum_{i,j:t_{i,n}^{(1)} \wedge t_{j,n}^{(2)}\leq T} f\big(\Delta_{i,n}^{(1)} X^{(1)},\Delta_{j,n}^{(2)} X^{(2)} \big)\mathds{1}_{\{\mathcal{I}_{i,n}^{(1)} \cap \mathcal{I}_{j,n}^{(2)}\neq \emptyset \}}
\]
in the style of the Hayashi-Yoshida estimator for the quadratic covariation (\cite{HayYos05}), and for a function $g: \R \rightarrow \R$ we define
\[
V^{(l)}(g,\pi_n)_T=\sum_{i:t_{i,n}^{(l)}\leq T} g\big(\Delta_{i,n}^{(l)} X^{(l)}\big), \quad l=1,2.
\]
In particular, as we are interested in estimating $\Phi_T^{(d)}$ from (\ref{defPhi}), we consider these expressions for the functions $f(x)=(x_1 x_2)^2$ and $g(x)=x^4$. Then our main statistic becomes
\[
\widetilde{\Phi}_{n,T}^{(d)}=\frac{V(f,\pi_n)_T}{\sqrt{V^{(1)}(g,\pi_n)_T V^{(2)}(g,\pi_n)_T}},
\]
whose asymptotics we are going to study and which will be used to construct an asymptotic test. 

In order to describe the asymptotics of $\widetilde{\Phi}_{n,T}^{(d)}$ we set
\begin{equation*}
\begin{aligned}
B_T=\sum_{s \leq T} \big(\Delta X_s^{(1)} \big)^2 \big(\Delta X_s^{(2)} \big)^2, \quad B_T^{(l)}=\sum_{s \leq T} \big(\Delta X_s^{(l)} \big)^4 \text{~for~} l=1,2,
\end{aligned}
\end{equation*}
so that 
\[
\Phi_T^{(d)}=\frac{B_T}{\sqrt{B_T^{(1)}B_T^{(2)}}}.
\]
Obviously, $\Phi_T^{(d)}$ is well-defined on the complement of $\Omega_T^{(c)}$ only, and in this case it can be interpreted as the correlation between the squared jumps of $X^{(1)}$ and $X^{(2)}$: $\Phi_T^{(d)}$ is always in $[0,1]$, and it is equal to $0$ if and only if there  are no common jumps and equal to $1$ if and only if there exists a constant $c > 0$ with $\big(\Delta X_s^{(1)} \big)^2=c\big(\Delta X_s^{(2)} \big)^2$ for all $s \leq T$. 

In order to derive results on the asymptotic behaviour of $\widetilde{\Phi}_{n,T}^{(d)}$, we require the following restrictions on the process $X$ and the observation scheme $\pi_n$.

\begin{condition}\label{cond_consistency}
The processes $b_s,\sigma_s^{(1)},\sigma_s^{(2)},\rho_s$ and $s \mapsto \delta(s,z)$ are continuous on $[0,T]$. Furthermore, we have $\|\delta(s,z)\| \leq \gamma(z)$ for some bounded function $\gamma$ which satisfies $\int (1 \wedge \gamma^2(z))\lambda(dz) < \infty$. The sequence of observation schemes $(\pi_n)_n$ fulfills
\[
|\pi_n|_T \overset{\mathbb{P}}{\longrightarrow} 0.
\]
\end{condition}

The conditions on the components of $X$ are not very restrictive and might even be further relaxed as in \cite{JacTod09}. We impose stronger restrictions here in order to keep the notation and the proofs simpler. The condition that the mesh vanishes is a minimal condition on the observation scheme, since we consider properties like the presence of jumps in the observed path which depend on the complete path in continuous time. 

Regarding the observation scheme, we are able to work in the general setting of increasing stopping times with vanishing mesh in order to derive consistency of the estimator $\widetilde{\Phi}_{n,T}^{(d)}$. This result might be of its own interest, as it generalizes results from Section 3 of \cite{JacPro12} to the case of asynchronicity. However, for the construction of a central limit theorem in Section \ref{sec:clt} we are not able to work within this general setting. Although in practice a theory for endogeneous observation times might be desirable, previous research shows that even in simple situations it is difficult to derive central limit theorems (see \cite{FukRos12} or \cite{VetZwi16}). For this reason we restrict ourselves in Section \ref{sec:clt} to exogeneous observation times which still cover a lot of random and irregular sampling schemes. We will see that already in this setting the asymptotic theory becomes significantly more difficult compared to the framework of equidistant observations. 

Speaking of consistency only, we are able to prove  
\begin{align}
&V(f,\pi_n)_T \overset{\mathbb{P}}{\longrightarrow}B_T, \label{cons_Vf}
\\&V^{(l)}(g,\pi_n)_T \overset{\mathbb{P}}{\longrightarrow}B_T^{(l)}, \quad l=1,2,\label{cons_Vg}
\end{align}
whenever Condition \ref{cond_consistency} holds. Note that \eqref{cons_Vg} already follows from Theorem 3.3.1 in \cite{JacPro12} while the first statement \eqref{cons_Vf} needs a generalization of this theorem to the setting of asynchronous observations. 

\begin{theorem}\label{cons_theo}
Let $X$ be an It\^o semimartingale of the form \eqref{ItoSemimart} and $(\pi_n)_n$ be a sequence of observation schemes such that Condition \ref{cond_consistency} is fulfilled. Then we have
\begin{align*}
\widetilde{\Phi}_{n,T}^{(d)} \overset{\mathbb{P}}{\longrightarrow} \Phi_T^{(d)}
\end{align*}
on the complement of $\Omega_T^{(c)}$.
\end{theorem}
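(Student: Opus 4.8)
On the complement of $\Omega_T^{(c)}$ — which equals $\Omega_T^{(d)}\cup\Omega_T^{(j)}$ — both $X^{(1)}$ and $X^{(2)}$ have at least one jump on $[0,T]$, hence $B_T^{(1)}>0$ and $B_T^{(2)}>0$ there. Since the map $(x,y,z)\mapsto x/\sqrt{yz}$ is continuous on $\R\times(0,\infty)^2$, the theorem follows from \eqref{cons_Vf}, \eqref{cons_Vg} and the continuous mapping theorem for stochastic convergence. As \eqref{cons_Vg} is Theorem 3.3.1 of \cite{JacPro12} applied to each component with $g(x)=x^4$, everything reduces to proving \eqref{cons_Vf}.

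For \eqref{cons_Vf} I would mimic the proof of the cited theorem and add the bookkeeping caused by asynchronicity. After the usual localization we may assume $b$, $\sigma$, $\rho$ and $\gamma$ bounded. Fix $\eps>0$ and decompose $X=X'(\eps)+X''(\eps)$ with $X''(\eps)_t=\int_0^t\int_{\R^2}\delta(s,z)\mathds{1}_{\{\gamma(z)>\eps\}}\,\mu(ds,dz)$: since $\int(1\wedge\gamma^2)\,d\lambda<\infty$ gives $\lambda(\{\gamma>\eps\})<\infty$, the process $X''(\eps)$ is a finite-activity, piecewise constant pure-jump process with finitely many jump times $S_1<\dots<S_N$ on $[0,T]$, and $X'(\eps)$ is an It\^o semimartingale (with a modified but still bounded drift) whose jumps are bounded by $\eps$. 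Setting $\alpha_i=\Delta_{i,n}^{(1)}X'(\eps)^{(1)}$, $\alpha_i'=\Delta_{i,n}^{(1)}X''(\eps)^{(1)}$ and defining $\beta_j,\beta_j'$ analogously from $X^{(2)}$, I would expand $f\bigl(\Delta_{i,n}^{(1)}X^{(1)},\Delta_{j,n}^{(2)}X^{(2)}\bigr)=(\alpha_i+\alpha_i')^2(\beta_j+\beta_j')^2$ into its nine monomials and treat the leading term $\alpha_i'^2\beta_j'^2$ separately from the other eight.

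For the leading term the key observation is geometric: once $n$ is large enough that $|\pi_n|_T$ is smaller than half the minimal gap between the $S_p$, an interval $\mathcal{I}_{i,n}^{(1)}$ carrying a jump of $X''(\eps)^{(1)}$ and an interval $\mathcal{I}_{j,n}^{(2)}$ carrying a jump of $X''(\eps)^{(2)}$ can intersect only if they contain a common $S_p$; on that event $\mathds{1}_{\{\mathcal{I}_{i,n}^{(1)}\cap\mathcal{I}_{j,n}^{(2)}\neq\emptyset\}}=1$ and, as the interval then contains no further jump of $X''(\eps)$, $\alpha_i'=\Delta X_{S_p}^{(1)}$ and $\beta_j'=\Delta X_{S_p}^{(2)}$ exactly. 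Conversely every common jump $S_p$ of $X''(\eps)$ lies in one interval of each grid, and those two intervals overlap. Hence for $n$ large the leading part of $V(f,\pi_n)_T$ equals $\sum_{p}(\Delta X_{S_p}^{(1)})^2(\Delta X_{S_p}^{(2)})^2\,\mathds{1}_{\{\Delta X_{S_p}^{(1)}\neq0,\ \Delta X_{S_p}^{(2)}\neq0\}}$, the sum over those $S_p$ that are genuine common jumps; letting $\eps\downarrow0$ this increases to $B_T$ by monotone convergence over the jumps of $X$.

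It then remains to show that the eight residual terms are $o_{\Prob}(1)$ after letting first $n\to\infty$ and then $\eps\downarrow0$, and this is the step I expect to be the main obstacle. The difficulty is that the mesh condition controls the lengths of the observation intervals but not how many short intervals of one grid may accumulate inside a single interval of the other, and that interval endpoints are stopping times so that $|\mathcal{I}_{i,n}^{(l)}|$ is not known at its left endpoint. One copes with this exactly as in \cite{JacPro12}: bound the second moments of the small increments $\alpha_i$, $\beta_j$ by the conditional expected interval lengths (with a constant that also dominates $\int_{\{\gamma\le\eps\}}\gamma^2\,d\lambda$, so that this constant vanishes as $\eps\downarrow0$ in the purely-small terms), use $\sum_i\bigl(t_{i,n}^{(1)}\wedge T-t_{i-1,n}^{(1)}\wedge T\bigr)=T$ and $|\pi_n|_T\to0$, the boundedness of the finitely many jumps of $X''(\eps)$, and the elementary fact that the first-grid intervals meeting a fixed second-grid interval have total length $O(|\pi_n|_T)$. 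Putting these bounds into the nine-term decomposition closes the argument and yields \eqref{cons_Vf}.
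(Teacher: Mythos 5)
Your reduction to \eqref{cons_Vf} via positivity of $B_T^{(l)}$ on $(\Omega_T^{(c)})^c$ and the continuous mapping theorem is exactly the paper's, and your treatment of the leading term (for large $n$ the jump-carrying intervals of the two grids intersect only around a common jump time of the finite-activity part, so the leading sum collapses to $\sum_p(\Delta X_{S_p}^{(1)})^2(\Delta X_{S_p}^{(2)})^2$ and increases to $B_T$ as $\eps\downarrow 0$) matches the paper's step \eqref{proof_cons1}. The gap is precisely in the part you flag as the main obstacle: the eight residual terms. Your proposed resolution --- conditional second-moment bounds on the small increments in the style of Theorem 3.3.1 of \cite{JacPro12} --- does not go through under Condition \ref{cond_consistency}. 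That condition only gives increasing stopping times with $|\pi_n|_T\to 0$ \emph{in probability}: the grids may be endogenous, there is no moment control on the mesh, and in the double sum the overlap indicator $\mathds{1}_{\{\mathcal{I}_{i,n}^{(1)}\cap\mathcal{I}_{j,n}^{(2)}\neq\emptyset\}}$ depends on future stopping times of both grids while the two increments it couples live on interleaved, overlapping intervals. There is no ordering of conditioning times along which iterated expectations can be taken, and bounds of the form $\mathbb{E}[\alpha_i^2]\leq K\,\mathbb{E}[|\mathcal{I}_{i,n}^{(1)}|]$ cannot simply be multiplied across the two asynchronous grids. This asynchronous cross-term bookkeeping is exactly the new difficulty relative to the cited single-grid theorem, and ``one copes with this exactly as in \cite{JacPro12}'' does not supply it.

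The paper avoids moments entirely and argues pathwise. It applies a weighted Cauchy--Schwarz inequality (its \eqref{algebr_ineq}) to split off the residuals with a factor $\rho$ in front of the leading term; the terms involving squared drift or squared small-martingale increments are bounded by simply dropping the overlap indicator and factoring the double sum into a product of two single-grid sums, each of which converges to a realized quadratic (co)variation --- a convergence valid along arbitrary stopping-time grids with vanishing mesh --- and the resulting limit $c_\rho([B(q),B(q)]_T+[M(q),M(q)]_T)[X,X]_T$ vanishes as $q\to\infty$. The genuinely bivariate terms, with a squared Brownian increment of one component against squared increments of the other, are controlled by the pathwise modulus
\begin{align*}
K(l,\varepsilon)=\sup_{0\leq t_0\leq\cdots\leq t_m\leq T,\ |t_m-t_0|\leq\varepsilon}\ \sum_{k=1}^m\big(C^{(l)}_{t_k}-C^{(l)}_{t_{k-1}}\big)^2,
\end{align*}
which tends to $0$ in probability as $\varepsilon\to 0$: since the first-grid intervals meeting a fixed second-grid interval span at most $3|\pi_n|_T$, the double sum is bounded on $\{|\pi_n|_T\leq\varepsilon\}$ by $K(3-l,3\varepsilon)$ times a convergent single-grid sum. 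You would need to replace your moment-bound step by an argument of this pathwise type (or strengthen the hypotheses to exogenous grids with mesh moment bounds, which is only assumed later for the CLT) for the proof to close.
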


Theorem \ref{cons_theo} states that $\widetilde{\Phi}_{n,T}^{(d)}$ converges to $0$ on the set $\Omega_T^{(d)}$ and to a strictly positive limit on $\Omega_T^{(j)}$. So a natural test for the null $\omega \in \Omega_T^{(d)}$ against $\omega \in \Omega_T^{(j)}$ makes use of a critical region of the form
\begin{align}\label{critical_region}
\mathcal{C}_n=\big\{\widetilde{\Phi}_{n,T}^{(d)}>\mathpzc{c}_n\big\}
\end{align}
for a suitable, possibly random sequence $(\mathpzc{c}_n)_{n \in \mathbb{N}}$. In order to choose $\mathpzc{c}_n$ such that the test has a certain level $\alpha$ we need knowledge of the asymptotic behaviour of $\widetilde{\Phi}_{n,T}^{(d)}$ on $\Omega_T^{(d)}$, which will be developed in form of a central limit theorem in the next section.

\section{Central limit theorem} \label{sec:clt}
\def\theequation{3.\arabic{equation}}
\setcounter{equation}{0}

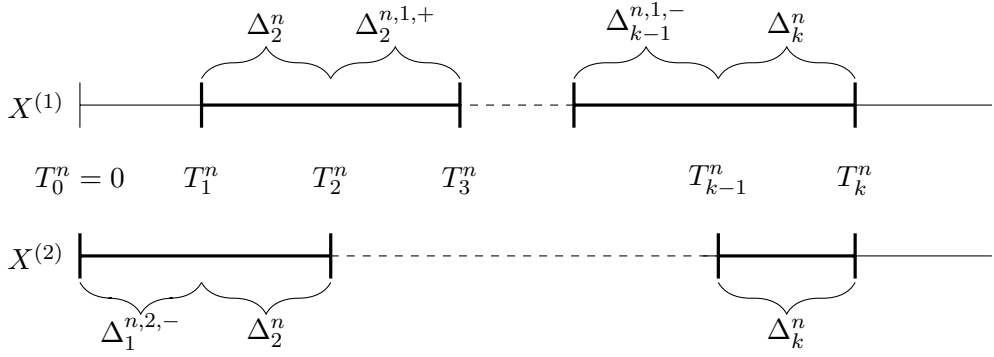
\begin{figure}[t]
\centering
\hspace{-0.6cm}
\begin{tikzpicture}
\draw[dashed] (5,1) -- (8,1)
			(3.8,-1) -- (8.9,-1);
\draw[very thick] (2.1,1) -- (5.5,1)
		(7,1) -- (10.7,1)
		(2.1,0.7) -- (2.1,1.3)
		(5.5,0.7) -- (5.5,1.3)
		(7,0.7) -- (7,1.3)
		(10.7,0.7) -- (10.7,1.3)
		(0.5,-1) -- (3.8,-1)
		(8.9,-1) -- (10.7,-1)
		(0.5,-0.7) -- (0.5,-1.3)
		(3.8,-0.7) -- (3.8,-1.3)
		(8.9,-0.7) -- (8.9,-1.3)
		(10.7,-0.7) -- (10.7,-1.3);
\draw (0.5,1) -- (5,1)
	  (8,1) -- (12.5,1)
		(0.5,-1) -- (3.7,-1)
		(8.7,-1) -- (12.5,-1)
		(0.5,0.7) -- (0.5,1.3);
\draw (0.5,0) node{$T_0^n=0$}
		(2.1,0) node{$T_1^n$}
		(3.8,0) node{$T_2^n$}
		(5.5,0) node{$T_3^n$}
		(8.9,0) node{$T_{k-1}^n$}
		(10.7,0) node{$T_{k}^n$};	
\draw[decorate,decoration={brace,amplitude=12pt}]
	(2.1,1.35)--(3.8,1.35) node[midway, above,yshift=10pt,]{$\Delta_2^n$};
	\draw[decorate,decoration={brace,amplitude=12pt}]
	(3.8,1.35)--(5.5,1.35) node[midway, above,yshift=10pt,]{$\Delta_2^{n,1,+}$};
	\draw[decorate,decoration={brace,amplitude=12pt}]
	(7,1.35)--(8.9,1.35) node[midway, above,yshift=10pt,]{$\Delta_{k-1}^{n,1,-}$};
	\draw[decorate,decoration={brace,amplitude=12pt}]
	(8.9,1.35)--(10.7,1.35) node[midway, above,yshift=10pt,]{$\Delta_k^{n}$};
\draw[decorate,decoration={brace,amplitude=12pt}]
	(10.7,-1.35)--(8.9,-1.35) node[midway, below,yshift=-10pt,]{$\Delta_k^{n}$};
	\draw[decorate,decoration={brace,amplitude=12pt}]
	(3.8,-1.35)--(2.1,-1.35) node[midway, below,yshift=-10pt,]{$\Delta_2^{n}$};
	\draw[decorate,decoration={brace,amplitude=12pt}]
	(2.1,-1.35)--(0.5,-1.35) node[midway, below,yshift=-8pt,]{$\Delta_1^{n,2,-}$};
\draw (0.5,1) node[left,xshift=-2pt]{$X^{(1)}$}
(0.5,-1) node[left,xshift=-2pt]{$X^{(2)}$};
\end{tikzpicture}
\caption{Merged observation times and interval lengths to previous and upcoming observation times.}
\end{figure}

In order to derive a central limit theorem we first have to specify the asymptotics of the observation scheme. We start by defining several quantities which depend on the stopping times only. Following \cite{BibVet15}, we merge the observation times of $X^{(1)}$ and $X^{(2)}$ into a single observation scheme given by
\begin{equation*}
\begin{aligned}
T^n_0&=0,
\\T^n_{k}&=\inf \{t_{i,n}^{(l)}| t_{i,n}^{(l)}>T^n_{k-1}, l=1,2\}, \quad k \geq 1.
\end{aligned}
\end{equation*}
We set $\Delta_k^n=T^n_k-T^n_{k-1}$ and
\begin{align*}
\Delta^{n,l,-}_k=T^n_k-\sup \{ t_{i,n}^{(l)}| t_{i,n}^{(l)} \leq T^n_k \}, \quad \Delta^{n,l,+}_k=\inf \{ t_{i,n}^{(l)}|t_{i,n}^{(l)} \geq T^n_k \}-T^n_k
\end{align*}
for $l=1,2$.

By 
\begin{align*}
\tau_{n,+}^{(l)}(s)=\inf\{t_{i,n}^{(l)}|t_{i,n}^{(l)} \geq s\}, \quad
\tau_{n,-}^{(l)}(s)=\sup\{t_{i,n}^{(l)}|t_{i,n}^{(l)} \leq s\}, \quad l=1,2,
\end{align*}
we denote the observation times immediately before and after time $s$. Using this notation we set
\begin{equation*}
\begin{aligned}
\mathcal{M}^{(l)}_n (s)=\tau_{n,+}^{(l)}(\tau_{n,+}^{(3-l)}(s))-\tau_{n,-}^{(l)}(\tau_{n,-}^{(3-l)}(s)), \quad l=1,2,
\end{aligned}
\end{equation*}
for the total length of the observation intervals of process $X^{(l)}$ which overlap with the observation interval of $X^{(3-l)}$ containing $s$. Let $i_n^{(l)}(s)$ denote the index of the observation interval of $X^{(l)}$ containing $s$, i.e. $i_n^{(l)}(s)$ is defined via $$s \in \mathcal{I}_{i_n^{(l)}(s),n}^{(l)}.$$ Over the intervals which make up $\mathcal{M}^{(l)}_n (s)$ we denote with $\eta^{(l)}_n(s)$, given by
\begin{equation}\label{def_eta_m}
\begin{aligned}
&\eta^{(1)}_n(s)=\sum_{j:\mathcal{I}_{j,n}^{(1)} \leq T} \big(\Delta_{i,n}^{(1)} W^{(1)}\big)^2 \mathds{1}_{\big\{\mathcal{I}_{j,n}^{(1)} \cap \mathcal{I}_{i^{(2)}_n(s),n}^{(2)} \neq \emptyset\big\}},
\\&\eta^{(2)}_n(s)=\sum_{j:\mathcal{I}_{j,n}^{(2)} \leq T} \big(\Delta_{j,n}^{(2)} W^{(2)}\big)^2
\mathds{1}_{\big\{\mathcal{I}_{i^{(1)}_n(s),n}^{(1)} \cap \mathcal{I}_{j,n}^{(2)} \neq \emptyset\big\}},
\end{aligned}
\end{equation}
the sum over the squared increments of the respective Brownian motions driving the processes $X^{(l)}$ and $X^{(2)}$. See Figure \ref{illu} for an illustration. Even though the driving Browian motions are in general dependent,  we will see that the limiting variables of $\eta^{(1)}_n(s)$ and $\eta^{(2)}_n(s)$ can be chosen to be independent, as under the null hypothesis both variables never occur at the same time in the limit.

\begin{figure}[t] \label{illu}
\centering
\hspace{-0.6cm}
\begin{tikzpicture}
\draw[dashed] (6,1.25) -- (6,-1.25) node[midway, xshift=10pt]{$s$}
			(1,-1) -- (3,-1)
			(7,-1) -- (13,-1)
			(1,1) -- (5,1)
			(11,1)--(13,1);
\draw[very thick] (5,1)--(11,1)
			(3,-1)--(7,-1) 
			(5,0.7)--(5,1.3)
			(11,0.7)--(11,1.3)
			(3,-0.7)--(3,-1.3)
			(7,-0.7)--(7,-1.3);
\draw (2,0.7) -- (2,1.3)
		(8,-1.3) -- (8,-0.7)
		(9.1,-1.3) -- (9.1,-0.7)
		(11.7,-1.3) -- (11.7,-0.7)
;
\draw[decorate,decoration={brace,amplitude=12pt}]
	(2,1.35)--(11,1.35) node[midway, above,yshift=10pt,]{$\mathcal{M}_n^{(1)}(s)$};
	
\draw[decorate,decoration={brace,amplitude=12pt}]
	(11.7,-1.35)--(3,-1.35) node[midway, below,yshift=-10pt,]{$\mathcal{M}_n^{(2)}(s)$};	
	\draw (1,1) node[left,xshift=-2pt]{$X^{(1)}$}
(1,-1) node[left,xshift=-2pt]{$X^{(2)}$};
\end{tikzpicture}
\caption{Intervals around time $s$.}
\end{figure}
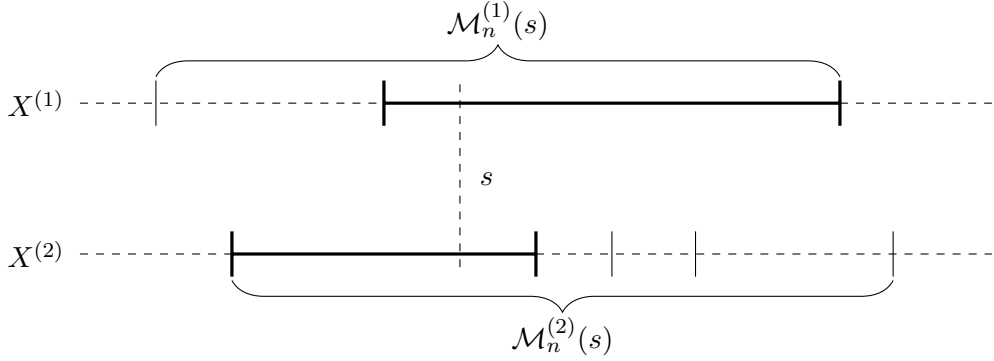

The following condition comprises the assumptions on the asymptotics of the sequence of observation schemes $(\pi_n)_n$ which are needed for the derivation of a central limit theorem. While the first one is a rather mild assumption on the mesh of the sampling scheme, the other two conditions ensure a kind of local regularity which is needed to deduce convergence both of the purely continuous part and the cross part in the limit. 

\begin{condition} \label{cond_centr}
The process $X$ and the sequence of observation schemes $(\pi_n)_n$ fulfill Condition \ref{cond_consistency}, and the observation times are exogeneous, i.e.\ independent of the process $X$ and its components.
\begin{enumerate}[(i)]
\item
It holds
\begin{align*}
\mathbb{E}[(|\pi_n|_T)^{3/2}]=o(n^{-1}).
\end{align*}
\item
The functions
\begin{equation*}
\begin{aligned}
G_n(t)&=n\sum_{k:T_k^n  \leq t} \big(\Delta_k^n \big)^2 ,
\\H_n(t)&=n\sum_{k:T_k^n  \leq t} \big(\Delta_{k-1}^{n,1,-}+\Delta_k^n+\Delta_{k}^{n,1,+} \big)\big(\Delta_{k-1}^{n,2,-}+\Delta_k^n+\Delta_{k}^{n,2,+} \big),
\end{aligned}
\end{equation*}
converge pointwise on $[0,T]$ in probability to continuously differentiable functions $G,H$.
\item
The integral
\begin{multline*}
\int_{[0,T]^{k_1+k_2}} g(x_1,\dots,x_{k_1},x_1',\dots,x_{k_2}') \mathbb{E} \Bigg[ \prod_{p=1}^{k_1} h^{(1)}_p \big(n\eta_n^{(1)}(x_p)\big)
\\\times \prod_{p=1}^{k_2} h^{(2)}_p \big(n\eta_n^{(2)}(x_p')\big)\Bigg] dx_{k_1} \dots dx_1dx_{k_2}' \dots dx_1'
\end{multline*}
converges for $n \rightarrow \infty$ to
\begin{multline}\label{cond_centr_iii2}
\int_{[0,T]^{k_1+k_2}} g(x_1,\dots,x_{k_1},x_1',\dots,x_{k_2}')\prod_{p=1}^{k_1} \int_{\mathbb{R}} h^{(1)}_p \big(y\big)\Gamma^{(1)}(x_p,dy)
\\\times \prod_{p=1}^{k_2} \int_{\mathbb{R}} h^{(2)}_p \big(y'\big)\Gamma^{(2)}(x_p',dy')
 dx_{k_1} \dots dx_1dx_{k_2}' \dots dx_1'
\end{multline}
for all bounded continuous functions $g:\mathbb{R}^{k_1+k_2} \rightarrow \mathbb{R}$ and $h^{(l)}_p:\mathbb{R}\rightarrow \mathbb{R},~l=1,2$. Here $\Gamma^{(l)}(\cdot,dy),~l=1,2,$ are families of probability measures on $[0, T]$ which admit densities such that the first moments are uniformly bounded.
\end{enumerate}
\end{condition}

Because of the exogeneity of the observation times we may assume in the following that the probability space has the form
\begin{align*}
(\Omega,\mathcal{F},\mathbb{P})=(\Omega_\mathcal{X} \times \Omega_\mathcal{S},\mathcal{X} \otimes \mathcal{S},\mathbb{P}_\mathcal{X} \otimes \mathbb{P}_\mathcal{S}),
\end{align*}
where $\mathcal{X}$ denotes the $\sigma$-algebra generated by $X$ and its components and $\mathcal{S}$ denotes the $\sigma$-algebra generated by the observation schemes $(\pi_n)_n$. We will from here on consider $\Omega_T^{(c)},\Omega_T^{(d)},\Omega_T^{(j)}$ to be subsets of $\Omega_\mathcal{X}$.

As usual when power variations for orders higher than two are considered, the limiting term in the central limit theorem will be comprised of a continuous term and a cross term which contains the continuous part of one process and the jumps of the other process. The term originating from the continuous terms is given by
\begin{align*}
\widetilde {C}_T=\int_0^T \big(2(\rho_s\sigma_s^{(1)}\sigma_s^{(2)})^2G'(s) +(\sigma_s^{(1)}\sigma_s^{(2)})^2 H'(s) \big)ds.
\end{align*}
Here, the functions $G'$ and $H'$ are a measure for the asymptotic density of observation times in a given time interval. Two different functions are needed because the products of increments over overlapping and non-overlapping observation intervals have different variances.

The limiting term originating from the cross terms of continuous parts and jumps is given by
\begin{align*}
\widetilde{D}_T=\sum_{p:S_p \leq T} \big( \big(\Delta X_{S_p}^{(1)}\big)^2\big(\sigma^{(2)}_{S_p}\big)^2 \eta^{(2)}(S_p)
+\big(\Delta X_{S_p}^{(2)}\big)^2\big(\sigma^{(1)}_{S_p}\big)^2 \eta^{(1)}(S_p)\big),
\end{align*}
where $(S_p)_{p \geq 0}$ is an enumeration of the jump times of $X$ and the $\eta^{(l)}(S_p)$ are random variables defined on an extended probability space $(\widetilde{\Omega},\widetilde{\mathcal{A}},\widetilde{\mathbb{P}})$. Their distribution is given by
\[
\eta^{(l)}(x) \sim \Gamma^{(l)}(x,dy),
\]
where the $\eta^{(l)}(x)$ are independent of each other and independent of the process $X$ and its components. It is worth mentioning that we do not consider common jumps, since we derive the central limit theorem under the null hypothesis of no common jumps. This leads to independent $\eta^{(l)}(x)$ which simplifies the structure of the limiting variables compared to \cite{BibVet15}. 

Using the above notation we derive the following central limit theorem on $\Omega_T^{(d)}$. 
\begin{theorem}\label{clt}
If Condition \ref{cond_centr} is fulfilled, we have the $\mathcal{X}$-stable convergence 
\begin{align}\label{clt_equation}
n \widetilde{\Phi}_{n,T}^{(d)} \overset{\mathcal{L}-s}{\longrightarrow} \widetilde{\Psi}_T=\frac{\widetilde {C}_T+\widetilde {D}_T}{\sqrt{B_T^1 B_T^2}}
\end{align}
on the set $\Omega_T^{(d)}$.
\end{theorem}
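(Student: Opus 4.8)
The plan is to strip off the (deterministic-limit) denominator, localise, and then decompose $n\,V(f,\pi_n)_T$ into a purely continuous part that converges in probability to the $\mathcal{X}$-measurable quantity $\widetilde{C}_T$ and a cross part built from the continuous part of one component and the jumps of the other, which converges $\mathcal{X}$-stably to $\widetilde{D}_T$, all remaining contributions being negligible. For the reduction, note that by \eqref{cons_Vg} we have $V^{(l)}(g,\pi_n)_T \overset{\mathbb{P}}{\longrightarrow} B_T^{(l)} > 0$ on $\Omega_T^{(d)}$, and $\sqrt{B_T^1 B_T^2}$ is $\mathcal{X}$-measurable, so by the usual stability of stable convergence under multiplication by such a factor it suffices to establish
\[
n\,V(f,\pi_n)_T \;\overset{\mathcal{L}-s}{\longrightarrow}\; \widetilde{C}_T+\widetilde{D}_T \qquad\text{on }\Omega_T^{(d)}.
\]
As is standard we may localise so that $b,\sigma^{(1)},\sigma^{(2)},\rho$ and $\delta$ are bounded; we fix $\varepsilon>0$ and split $X^{(l)}=A^{(l)}+M^{(l),c}+M^{(l),\varepsilon}+J^{(l)}$ into its drift, its continuous martingale part $M^{(l),c}=\int\sigma^{(l)}\,dW^{(l)}$, the compensated sum of its jumps of size $\le\varepsilon$, and the finite sum $J^{(l)}$ of its jumps of size $>\varepsilon$, letting $\varepsilon\downarrow0$ only at the end.

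The combinatorial backbone is the observation that whenever $\mathcal{I}_{i,n}^{(1)}\cap\mathcal{I}_{j,n}^{(2)}\neq\emptyset$ this intersection is exactly one block $(T_{k-1}^n,T_k^n]$ of the merged scheme; consequently the overlapping pairs $(i,j)$ are in bijection with the merged blocks $k$, and for the pair associated with block $k$ one has $|\mathcal{I}_{i,n}^{(1)}|=\Delta_{k-1}^{n,1,-}+\Delta_k^n+\Delta_k^{n,1,+}$ and $|\mathcal{I}_{j,n}^{(2)}|=\Delta_{k-1}^{n,2,-}+\Delta_k^n+\Delta_k^{n,2,+}$, while $\mathcal{I}_{i,n}^{(1)}\cap\mathcal{I}_{j,n}^{(2)}$ has length $\Delta_k^n$. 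Expanding $f(x)=(x_1x_2)^2$ along the four pieces of each increment and summing over the merged blocks, one sees that after multiplication by $n$ only two families survive in the limit: (a) the purely continuous terms $(\Delta_{i,n}^{(1)}M^{(1),c})^2(\Delta_{j,n}^{(2)}M^{(2),c})^2$, and (b) the cross terms in which exactly one of the two increments carries a jump of $J^{(l)}$ while the other is the continuous martingale increment of the opposite component. All other terms can be estimated away: a term mixing, within a single increment, a jump of $J^{(l)}$ with a continuous piece is of order $|\pi_n|^{3/2}$ on the finitely many blocks meeting a jump time, hence $o_{\mathbb{P}}(n^{-1})$ by Condition \ref{cond_centr}(i); terms built only from $A^{(l)}$ and $M^{(l),\varepsilon}$ are bounded after summation by a constant times $n$ times a sum of interval lengths times the quadratic variation carried by jumps of size $\le\varepsilon$ (and by a still higher power of $|\pi_n|$ when the drift is involved), i.e.\ by $O_{\mathbb{P}}(1)\cdot o_\varepsilon(1)$; and a term pairing a jump of $J^{(1)}$ with a jump of $J^{(2)}$ would force a common jump, which on $\Omega_T^{(d)}$ does not occur, so such pairs vanish once $|\pi_n|$ is small enough to separate the finitely many jump times.

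For family (a) I would prove a law of large numbers, $n\sum_{i,j}(\Delta_{i,n}^{(1)}M^{(1),c})^2(\Delta_{j,n}^{(2)}M^{(2),c})^2\overset{\mathbb{P}}{\longrightarrow}\widetilde{C}_T$: freezing $\sigma^{(1)},\sigma^{(2)},\rho$ at the left endpoint of each block (with errors controlled by path continuity, Burkholder--Davis--Gundy and Condition \ref{cond_consistency}), the conditional mean of the block-$k$ summand is, to leading order, $(\sigma_s^{(1)}\sigma_s^{(2)})^2\big(|\mathcal{I}_{i,n}^{(1)}||\mathcal{I}_{j,n}^{(2)}|+2\rho_s^2(\Delta_k^n)^2\big)$, so summing $n$ times this over $\{k:T_k^n\le t\}$ and using $G_n\to G$, $H_n\to H$ from Condition \ref{cond_centr}(ii) produces $\int_0^t\big(2(\rho_s\sigma_s^{(1)}\sigma_s^{(2)})^2G'(s)+(\sigma_s^{(1)}\sigma_s^{(2)})^2H'(s)\big)ds=\widetilde{C}_t$, whereas the centred sum is $O_{\mathbb{P}}(n^{-1/2})$ because each of the $O(n)$ blocks contributes variance $O(|\pi_n|^4)$ with only short-range dependence. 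For family (b), on the finitely many blocks meeting a jump time $S_p$ the contribution equals, up to negligible errors (replacing the $J$-increment by $\Delta X_{S_p}$, replacing $\sigma^{(l)}$ by $\sigma^{(l)}_{S_p}$, and using that on $\Omega_T^{(d)}$ the opposite component does not jump near $S_p$), $(\Delta X_{S_p}^{(1)})^2(\sigma_{S_p}^{(2)})^2\,n\eta_n^{(2)}(S_p)$ or $(\Delta X_{S_p}^{(2)})^2(\sigma_{S_p}^{(1)})^2\,n\eta_n^{(1)}(S_p)$ according to which component jumps, i.e.\ precisely the summands of $\widetilde{D}_T$ once $n\eta_n^{(l)}(S_p)$ is replaced by its limiting law. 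The joint $\mathcal{X}$-stable convergence of $\big(n\eta_n^{(1)}(S_p),n\eta_n^{(2)}(S_p)\big)_p$ towards the independent, $X$-independent variables $\eta^{(l)}(S_p)\sim\Gamma^{(l)}(S_p,dy)$ is exactly what Condition \ref{cond_centr}(iii) is designed to deliver: combined with the exogeneity of $\pi_n$ and the Poisson structure of the jumps of $X$ (which turns a sum over the $S_p$ into the integral over $[0,T]^{k_1+k_2}$ appearing there), its test-function formulation yields this convergence on the extension $(\widetilde{\Omega},\widetilde{\mathcal{A}},\widetilde{\mathbb{P}})$. Since the limit $\widetilde{C}_T$ of family (a) is $\mathcal{X}$-measurable, adding the two limits, dividing by $V^{(1)}(g,\pi_n)_TV^{(2)}(g,\pi_n)_T\overset{\mathbb{P}}{\longrightarrow}B_T^1B_T^2$, and sending $\varepsilon\downarrow0$ gives \eqref{clt_equation} on $\Omega_T^{(d)}$.

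I expect the main obstacle to be family (b): one must justify, uniformly after multiplication by $n$, that the true increments around a jump may be replaced by ``jump size $\times$ frozen-$\sigma$ Brownian increment'' --- this is precisely where $\mathbb{E}[(|\pi_n|_T)^{3/2}]=o(n^{-1})$ from Condition \ref{cond_centr}(i) enters --- and then upgrade the averaged statement of Condition \ref{cond_centr}(iii) to genuine $\mathcal{X}$-stable convergence, which requires setting up the extension $(\widetilde{\Omega},\widetilde{\mathcal{A}},\widetilde{\mathbb{P}})$ and verifying the asserted conditional independence of the $\eta^{(l)}(S_p)$. The remaining work --- checking that each remainder term in the expansion of $V(f,\pi_n)_T$ is $o_{\mathbb{P}}(n^{-1})$, respectively $o_\varepsilon(n^{-1})$ uniformly in $n$ --- is routine but lengthy.
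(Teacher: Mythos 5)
Your proposal is correct and follows essentially the same route as the paper: reduction to the numerator via \eqref{cons_Vg}, localization, the decomposition into drift, continuous martingale, compensated small jumps and finitely many big jumps (your $\varepsilon$ playing the role of the paper's $1/q$), a martingale law of large numbers with frozen coefficients for the purely continuous part using Condition \ref{cond_centr}(ii), the $\mathcal{X}$-stable convergence of the jump--Brownian cross terms via Condition \ref{cond_centr}(iii), and negligibility of all remaining terms with the $n(|\pi_n|_T)^{3/2}$ bound from Condition \ref{cond_centr}(i) entering exactly where you place it. This matches the paper's Propositions \ref{clt_prop1}--\ref{clt_prop3} in both structure and substance.
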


The central limit theorem states that $n \widetilde{\Phi}_{n,T}^{(d)}$ converges $\mathcal{X}$-stably in law on the set $\Omega_T^{(d)}$ to a random variable $\widetilde{\Psi}_T$ on an extended probability space $(\widetilde{\Omega},\widetilde{\mathcal{A}},\widetilde{\mathbb{P}})$ which means that we have
\[
\mathbb{E} \big[g\big(n \widetilde{\Phi}_{n,T}^{(d)}  \big) Y\mathds{1}_{\Omega_T^{(d)}}\big] \rightarrow \widetilde{\mathbb{E}}\big[g\big(\widetilde{\Psi}_T \big) Y\mathds{1}_{\Omega_T^{(d)}}\big]
\] 
for all bounded and continuous functions $g$ and all $\mathcal{X}$-measurable bounded random variables $Y$. For more background information on stable convergence in law we refer to \cite{JacPro12}, \cite{JacShi02} and \cite{PodVet10}.

\begin{example}\label{ext_equid}
Let us discuss the standard setting of equidistant and synchronous observations times. In this case, $t_{i,n}^{(l)}=i/n$, so we have $|\pi_n|_T=n^{-1}$. Hence Condition \ref{cond_consistency} and Condition \ref{cond_centr}(i) are trivially fulfilled. Furthermore,
\begin{align*}
H_n(t)=G_n(t)=n\sum_{i=1}^{\lfloor t/n \rfloor} \big(1/n \big)^2 \rightarrow  t,
\end{align*}
which yields Condition \ref{cond_centr}(ii). We also have $n \eta_n^{(l)}(s) \sim \chi_1^2$, so the limiting distribution $\eta^{(l)}(s)$ is known to be $\chi_1^2$. Standard arguments finally show that $n \eta^{(l)}_n(S_p)$ and $n \eta^{(l')}_n(S_{p'})$ are asymptotically independent for different jump times $S_p \neq S_{p'}$. Hence Condition \ref{cond_centr}(iii) is satisfied and we have \eqref{clt_equation} with
\begin{align*}
\widetilde {C}_T&=\int_0^T \big(2(\rho_s\sigma_s^{(1)}\sigma_s^{(2)})^2 +(\sigma_s^{(1)}\sigma_s^{(2)})^2  \big)ds,
\\
\widetilde{D}_T&=\sum_{p:S_p \leq T} \big( \big(\Delta X_{S_p}^{(1)}\big)^2\big(\sigma^{(2)}_{S_p}\big)^2 \big(U^{(1)}_p\big)^2
+\big(\Delta X_{S_p}^{(2)}\big)^2\big(\sigma^{(1)}_{S_p}\big)^2 \big(U^{(2)}_p\big)^2\big),
\end{align*}
for independent standard normal distributed random variables $U^{(l)}_p$, $l=1,2$. Of course, these terms are identical to the corresponding terms $C_T$ and $\widetilde{D}_T$ in (3.12) and (3.14) of \cite{JacTod09}, and Theorem \ref{clt} becomes Theorem 4.1(a) of \cite{JacTod09} in this setting.
\end{example}

In order to illustrate the theory laid out above we also want to discuss a truly irregular and random setting. Specifically, we consider observation times which are given by the jump times of Poisson processes, but our conditions cover various other sampling schemes as well. Note that Poisson sampling has been discussed frequently in the literature; see e.g. \cite{BibVet15} and \cite{HayYos08}.

\begin{example}\label{poiss1}
Let the observation times of $X^{(1)}$ and $X^{(2)}$ be given by the jump times of independent Poisson processes with intensities $n\lambda_1$ and $n \lambda_2$. Lemma 8 from \cite{HayYos08} states 
\begin{align}\label{Poisson_mesh}
\mathbb{E}\big[ (|\pi_n|_T)^q \big]=o(n^{-\alpha})
\end{align} 
for any $0 \le \alpha < q$, so both Condition \ref{cond_consistency} and Condition \ref{cond_centr}(i) are satisfied. In addition, \mbox{Proposition 1} in \cite{HayYos08} gives Condition \ref{cond_centr}(ii) via
\begin{small}
\begin{equation*}
\begin{aligned}
G_n(t)&=n\sum_{i,j:t_{i,n}^{(1)} \wedge t_{j,n}^{(2)} \leq t} \big|\mathcal{I}_{i,n}^{(1)} \cap \mathcal{I}_{j,n}^{(2)}\big|^2 \mathds{1}_{\{\mathcal{I}_{i,n}^{(1)} \cap \mathcal{I}_{j,n}^{(2)}\neq \emptyset\}}
+nO((|\pi_n|_t)^2)
\overset{\mathbb{P}}{\longrightarrow} \frac{2}{\lambda_1+\lambda_2}t,
\\H_n(t)&=n\sum_{i,j:t_{i,n}^{(1)} \wedge t_{j,n}^{(2)} \leq t} \big|\mathcal{I}_{i,n}^{(1)}\big|\big| \mathcal{I}_{j,n}^{(2)}\big| \mathds{1}_{\{\mathcal{I}_{i,n}^{(1)} \cap \mathcal{I}_{j,n}^{(2)}\neq \emptyset\}}+nO((|\pi_n|_t)^2)\overset{\mathbb{P}}{\longrightarrow}\big(\frac{2}{\lambda_1}+\frac{2}{\lambda_2}\big)t.
\end{aligned} 
\end{equation*}
\end{small}
Finally, we show that Condition \ref{cond_centr}(iii) is satisfied. Note first that the distributions of the sampling scheme $\pi_1$ and the rescaled $n \pi_n$ are identical. Therefore, the distributions of $n \eta_n^{(l)}(s)$ and $\eta_1^{(l)}(ns)$ are identical, and the distribution of the latter only depends on $s$ through the fact that the backward waiting time for the previous observation is bounded by $ns$. This effect becomes asymptotically irrevelant as $n$ grows, thus $n \eta_n^{(l)}(s)$ converges. Note also that the  $n \eta_n^{(l)}(s)$ are asymptotically independent because the Wiener process $W$ and the Poisson processes have independent increments and the $n \eta_n^{(l)}(S_p)$ overlap asymptotically with diminishing probability. Therefore the factorization of the expectations in \eqref{cond_centr_iii2} holds.

By symmetry we focus on $\eta^{(1)}(s)$ only which can be constructed from elementary distributions. Let $E_1^{(1)},E_2^{(1)} \sim Exp(\lambda_{1})$ and $E_1^{(2)},E_2^{(2)} \sim Exp(\lambda_{2})$ be independent. Then, after rescaling, the length of the interval around $s$ in the second series is asymptotically $E_1^{(2)} + E_2^{(2)}$, and conditionally the number of observations in the first series, which we will denote by $P$, is $Poisson (\lambda_1 (E_1^{(2)}+E_2^{(2)}))$. Then, if $(U_n)_{n \in \mathbb{N}}$ and $(N_n)_{n \in \mathbb{N}}$ are i.i.d.\ $\mathcal{U}[0,1]$ and $\mathcal{N}(0,1)$ random variables, respectively, it is easy to deduce that
\begin{align}\label{poiss1_representation}
\eta^{(1)}(s)\overset{\mathcal{L}}{=}\sum_{j=1}^{P+1} \left(R_{(j)}-R_{(j-1)} \right) \left(N_j \right)^2
\end{align}
holds, where we set 
\begin{align*}
&R_0=0 ,
\\&R_j=E_1^{(1)}+U_j(E_1^{(2)}+E_2^{(2)}), \quad j=1,\ldots,P,
\\&R_{P+1}=E_1^{(1)}+(E_1^{(2)}+E_2^{(2)})+E_2^{(1)}, 
\end{align*}
and let $R_{(j)}$ denote the $j$-th largest element from $R_0,\ldots,R_{P+1}$. 
\end{example}

\section{Testing for disjoint jumps} \label{sec:test}
\def\theequation{4.\arabic{equation}}
\setcounter{equation}{0}

We will introduce a test which makes use of a critical region of the form \eqref{critical_region}. In Section \ref{sec:clt} we have derived a central limit theorem for $\widetilde{\Phi}_{n,T}^{(d)}$. However, this result can not directly be applied for determining $\mathpzc{c}_n$ since the law of the limiting variable in Theorem \ref{clt} is itself random and not known to the statistician. Hence, in order to develop a statistical test we need to estimate the law of the limiting variable $\widetilde{\Psi}_T$. 

Estimating the continuous term $\widetilde{C}_T$ in $\widetilde{\Psi}_T$ boils down to estimating the continuous part of $X$. This can be done using truncated increments as in (4.5) of \cite{JacTod09}. With $\beta >0$ and $\varpi \in (0,1/2)$ we set
\begin{multline*}
A_{n,T}(\beta,\varpi)= n \sum_{i,j:t_{i,n}^{(1)} \wedge t_{j,n}^{(2)} \leq T} (\Delta_{i,n}^{(1)} X^{(1)})^2 (\Delta_{j,n}^{(2)} X^{(2)})^2 
\\ \times
\mathds{1}_{\{|\Delta_{i,n}^{(1)} X^{(1)} |\leq \beta |\mathcal{I}_{i,n}^{(1)}|^{\varpi} \wedge |\Delta_{j,n}^{(2)} X^{(2)} |\leq \beta |\mathcal{I}_{j,n}^{(2)}|^{\varpi}\}}
 \mathds{1}_{\{\mathcal{I}_{i,n}^{(1)} \cap \mathcal{I}_{j,n}^{(2)} \neq \emptyset\}}.
\end{multline*}
In order to estimate the law of $\widetilde{D}_T$ in $\widetilde{\Psi}_T$ we need to estimate the law of the $\eta^{(l)}(s)$ first, which is not known in practice unless one imposes knowledge on the nature of the sampling scheme. In principle, we would like to introduce a Monte Carlo approach and simulate the quantiles of $\widetilde{D}_T$, and a first approach obviously is to replace the increments of the Brownian motion in \eqref{def_eta_m} by appropriately scaled realizations of standard normal random variables. However, we have to scale by the lengths of the observation intervals which follow an unknown distribution. To circumvent this issue we use a bootstrap method and estimate the distribution of the observation intervals as well. The idea here is to estimate this distribution around time $s$ by using the observation interval which contains $s$ together with the $K_n$ previous and following intervals. In order for this procedure to work we will introduce a local homogeneity condition later, in the sense that $n\eta_n^{(l)}(s_1)$ and $n\eta_n^{(l)}(s_2)$ have similar distributions for small values of $|s_1-s_2|$ but become asymptotically independent otherwise. 

To formalize, let $(K_n)_n$ and $(M_n)_n$ denote deterministic sequences of integers which tend to infinity. For any $s \in (0,T)$ we define the random variables
\begin{align*}
\hat{\eta}_{n,m}^{(l)}(s)=n\sum_{i:\mathcal{I}_{i,n}^{(l)} \leq T} |\mathcal{I}_{i,n}^{(l)}|(U^{(l)}_{n,i,m})^2 
\mathds{1}_{\{\mathcal{I}_{i,n}^{(l)} \cap \mathcal{I}_{i_n^{(3-l)}(s)+V^{(3-l)}_{n,m}(s),n}^{(3-l)} \neq \emptyset\}}, ~~ l=1,2,
\end{align*}
the $U^{(l)}_{n,i,m}$ are $\mathcal{N}(0,1)$ random variables and the $V_{n,m}^{(l)}(s)$ are distributed according to
\begin{align*}
\widetilde{\mathbb{P}}(V^{(l)}_{n,m}(s)=k|\mathcal{S})= | \mathcal{I}_{i_n^{(l)}(s)+k,n}^{(l)}| \big(\sum_{j=-K_n}^{K_n} | \mathcal{I}_{i_n^{(l)}(s)+k,n}^{(l)}| \big)^{-1}, ~~ k\in\{-K_n,\ldots,K_n\}, 
\end{align*}
all $\mathcal S$-conditionally independent as $m = 1, \ldots, M_n$ varies. Both the $U^{(l)}_{n,i,m}$ and the $V_{n,m}^{(l)}(s)$ are defined on $(\widetilde{\Omega},\widetilde{\mathcal{A}},\widetilde{\mathbb{P}})$ as well. By construction, $\hat{\eta}_{n,m}^{(l)}(s)$ corresponds to a mixture of the $$n\eta_n^{(l)}\big(t_{i_n^{(3-l)}(s)+k,n}^{(3-l)}\big), \quad k\in\{-K_n,\ldots,K_n\},$$ where the increments of $W$ are replaced by the $U_{n,i,m}^{(l)}$ and the probability of choosing a specific $k$ is proportional to the length of $\mathcal{I}_{i_n^{(l)}(s)+k,n}^{(l)}$. This makes sense intuitively, as the probability of a jump of $X$ to fall into a specific interval is proportional to the length of the latter as well.

Consistent estimators for the jumps $\Delta X_s^{(l)}$ and the volatility $\big(\sigma_s^{(l)}\big)^2$, $l=1,2$, are given by
\begin{align*}
&\widehat{\Delta}_n X^{(l)}(s)=\Delta_{i_n^{(l)}(s),n}^{(l)} X^{(l)}  \mathds{1}_{\{|\Delta_{i_n^{(l)}(s),n}^{(l)}X^{(l)} |> \beta |\mathcal{I}_{i_n^{(l)}(s),n}^{(l)}|^{\varpi} \}} , \nonumber
\\&\big(\hat{\sigma}_n^{(l)}(s) \big)^2=\frac{1}{2b_n} 
\sum_{i:t_{i,n}^{(l)} \in [s-b_n,s+b_n]} \big(\Delta_{i,n}^{(l)} X^{(l)}\big)^2, \quad \text{if } \Delta X^{(l)}_s=0, 
\\ &\big(\tilde{\sigma}_n^{(l)}(s) \big)^2=\frac{1}{2b_n}
\sum_{i:t_{i,n}^{(l)} \in [s-b_n,s+b_n]} \big(\Delta_{i,n}^{(l)} X^{(l)}\big)^2 \mathds{1}_{\{|\Delta_{i,n}^{(l)} X^{(l)} |\leq \beta |\mathcal{I}_{i,n}^{(l)}|^{\varpi} \}}, \nonumber
\end{align*}
where $\beta>0$ and $\varpi \in (0,1/2)$, and $b_n$ is a sequence with $b_n \rightarrow 0$ and $|\pi_n|_T/b_n \overset{\mathbb{P}}{\longrightarrow}0$. In fact, we will use both estimators for $\sigma^{(l)}$ throughout the course of the paper, since we are only interested in estimating $\sigma^{(l)}$ when $X^{(3-l)}$ jumps, in which case $\Delta X^{(l)}$ vanishes under the null hypothesis.

Using these estimators we define 
\begin{multline*}
\widehat{D}_{T,n,m}=
\sum_{i,j:t_{i,n}^{(1)} \wedge t_{j,n}^{(2)} \leq T} \bigg(
\big(\Delta_{i,n}^{(1)} X^{(1)} \big)^2 \mathds{1}_{\{|\Delta_{i,n}^{(1)} X^{(1)} |> \beta |\mathcal{I}_i^n|^{\varpi} \}}\big(\hat{\sigma}_n^{(2)}(t_{i,n}^{(1)}) \big)^2\hat{\eta}_{n,m}^{(2)}(t_{i,n}^{(1)})
\\+\big(\Delta_{j,n}^{(2)} X^{(2)} \big)^2 \mathds{1}_{\{|\Delta_{j,n}^{(2)} X^{(2)} |> \beta |\mathcal{J}_j^n|^{\varpi} \}}\big(\hat{\sigma}_n^{(1)}(t_{j,n}^{(2)}) \big)^2 \hat{\eta}_{n,m}^{(1)}(t_{j,n}^{(2)}) \bigg),
\end{multline*}
and for $\alpha \in [0,1]$ we set
\begin{align*}
\widehat{Q}_{n,T}(\alpha)=\widehat{Q}_{\alpha}\big(\big\{\widehat{D}_{T,n,m} \big|m=1,\ldots,M_n \big\}\big)
\end{align*}
where $\widehat{Q}_{\alpha}(B)$ denotes the $\lfloor \alpha N \rfloor$-th largest element of a set $B$ with $N \in \mathbb{N}$ elements. $\widetilde{D}_{T,n,m}$ and $\widetilde{Q}_{n,T}(\alpha)$ are defined analogously by replacing $\hat{\sigma}_n^{(l)}$ with $\tilde{\sigma}_n^{(l)}$. We will see that these expressions consistently estimate the $\mathcal{X}$-conditional $\alpha$ quantile of $\widetilde{D}_T$.

The following condition summarizes all additional assumptions we need in order to obtain an asymptotic test. It ensures in particular that the empirical common distribution of the $\hat{\eta}^{(l)}_{n,m}(s_j)$ converges to the common distribution of the $\eta^{(l)}(s_j)$ which is essential for the bootstrap method to work. 

\begin{condition}\label{condition_test}
The process $X$ and the sequence of observation schemes $(\pi_n)_n$ satisfy Condition \ref{cond_centr}, and $(b_n)_n$ fulfills $|\pi_n|_T/b_n \overset{\mathbb{P}}{\longrightarrow}0$. Also, $(K_n)_n$ and $(M_n)_n$ are sequences of integers converging to infinity, and $|\pi_n|_T K_n \overset{\mathbb{P}}{\longrightarrow}0$. Additionally,
\begin{align} \label{vert_kon1}
\widetilde{\mathbb{P}}\big(\big| \widetilde{\mathbb{P}}\big(\hat{\eta}^{(l_j)}_{n,1}(s_j)\leq x_j,~j=1,\ldots,J \big| \mathcal S \big)
-
  \widetilde{\mathbb{P}}\big({\eta}^{(l_j)}(s_j)\leq x_j,~j=1,\ldots,J\big) \big|> \varepsilon \big) \to 0
\end{align}
%\begin{multline}\label{vert_kon1}
%\widetilde{\mathbb{P}}\big(\big|\frac{1}{M_n}  \sum_{m=1}^{M_n} \big(\mathds{1}_{\{\hat{\eta}^{(l_j)}_{n,m}(s_j)\leq x_j,~j=1,\ldots,J\}} 
%- \widetilde{\mathbb{P}}(\eta^{(l_j)}(s_j) \leq x_j,~j=1,\ldots,J)\big) \big|> \varepsilon \big) \rightarrow 0
%\end{multline}
as $n \rightarrow \infty$, for all $\varepsilon > 0$, $J \in \mathbb{N}$, $x=(x_1, \ldots, x_J) \in \R^J$, $l_j\in\{1,2\}$ and $s_j \in (0,T)$, $j=1,\ldots,J$, with $s_i \neq s_j$ for $i \neq j$.
\end{condition}
Let either $Q_{n,T}(1-\alpha)=\widehat{Q}_{n,T}(1-\alpha)$ or $Q_{n,T}(1-\alpha)=\widetilde{Q}_{n,T}(1-\alpha)$.

\begin{theorem}\label{test_theo}
If Condition \ref{condition_test} is satisfied, the test defined in \eqref{critical_region} with
\begin{align*}
\mathpzc{c}_n= \frac{A_{n,T}(\beta, \varpi)+Q_{n,T}(1-\alpha)}{n\sqrt{V^{(1)}(g,\pi_n)_TV^{(2)}(g,\pi_n)_T}}, \quad \alpha \in [0,1],
\end{align*}
has asymptotic level $\alpha$ in the sense that we have
\begin{align}\label{test_theo_level}
\widetilde{\mathbb{P}}\big(\widetilde{\Phi}_{n,T}^{(d)} > \mathpzc{c}_n \big| F^{(d)} \big) \rightarrow \alpha 
\end{align}
for all $F^{(d)} \subset \Omega_T^{(d)}$ with $\mathbb{P}(F^{(d)})>0$. Because of
\begin{align}\label{test_theo_power}
\widetilde{\mathbb{P}}\big(\widetilde{\Phi}_{n,T}^{(d)} > \mathpzc{c}_n \big| F^{(j)} \big) \rightarrow 1 
\end{align}
for all $F^{(j)} \subset \Omega_T^{(j)}$ with $\mathbb{P}( F^{(j)})>0$ it is consistent as well.
\end{theorem}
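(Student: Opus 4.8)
The plan is to combine the central limit theorem from Theorem \ref{clt} with the consistency of the bootstrap estimators for the two components $\widetilde{C}_T$ and $\widetilde{D}_T$ of the limiting variable, and then to invoke a version of Slutsky's lemma adapted to $\mathcal{X}$-stable convergence. First I would rewrite the event $\{\widetilde{\Phi}_{n,T}^{(d)} > \mathpzc{c}_n\}$ in the equivalent form $\{n\widetilde{\Phi}_{n,T}^{(d)} \cdot \sqrt{V^{(1)}(g,\pi_n)_T V^{(2)}(g,\pi_n)_T} - A_{n,T}(\beta,\varpi) > Q_{n,T}(1-\alpha)\}$, so that everything is phrased in terms of the statistic $n\widetilde{\Phi}_{n,T}^{(d)}$ whose limit we know. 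Using \eqref{cons_Vg} we have $V^{(l)}(g,\pi_n)_T \to B_T^{(l)}$, and one shows (as in \cite{JacTod09}, via the truncation at level $\beta|\mathcal{I}|^{\varpi}$ which kills the jumps and isolates the continuous part) that $A_{n,T}(\beta,\varpi) \pto \widetilde{C}_T$ on $\Omega_T^{(d)}$. Hence the left-hand side of the rewritten event converges $\mathcal{X}$-stably to $\widetilde{C}_T + \widetilde{D}_T$ on $\Omega_T^{(d)}$ by the continuous-mapping property of stable convergence together with Theorem \ref{clt}.

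The crux is then to prove that $Q_{n,T}(1-\alpha)$ consistently estimates the $\mathcal{X}$-conditional $(1-\alpha)$-quantile of $\widetilde{D}_T$. I would first fix a jump time $S_p$ of $X$; under the null $\Omega_T^{(d)}$ the other process is continuous near $S_p$, so $\widehat{\Delta}_n X^{(l)}(S_p)$ identifies $\Delta X_{S_p}^{(l)}$ and $(\hat\sigma_n^{(3-l)}(S_p))^2$ (or the truncated version) is consistent for $(\sigma_{S_p}^{(3-l)})^2$ by the standard spot-volatility estimator arguments, using $|\pi_n|_T/b_n \to 0$. The remaining and genuinely new ingredient is that the bootstrapped quantities $\hat\eta_{n,m}^{(l)}(s)$ reproduce, in the $M_n \to \infty$ limit, the correct conditional law $\Gamma^{(l)}(s,dy)$ of $\eta^{(l)}(s)$: this is exactly what \eqref{vert_kon1} in Condition \ref{condition_test} is designed to give, and one should check that the finitely many jump times $S_1, \ldots, S_{N_T}$ on $[0,T]$ fall into pairwise distinct observation intervals for $n$ large, so that the joint empirical law of the $\hat\eta_{n,m}^{(l)}(S_p)$ over $m=1,\ldots,M_n$ converges (conditionally on $\mathcal{S}$, hence on $\mathcal{X}$ too by the product structure) to the product law $\bigotimes_p \Gamma^{(l)}(S_p,dy)$ appearing in $\widetilde{D}_T$. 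Combining this with the convergence of $\widehat{D}_{T,n,m}$'s ingredients, the empirical distribution of $\{\widehat{D}_{T,n,m} : m \le M_n\}$ converges weakly to the $\mathcal{X}$-conditional law of $\widetilde{D}_T$, and since that limiting law is atomless in a neighbourhood of its $(1-\alpha)$-quantile (because the $\eta^{(l)}$ have densities and at least one jump is present on $\Omega_T^{(d)}$), the empirical quantile $Q_{n,T}(1-\alpha)$ converges to the true conditional quantile $q_{1-\alpha}(\widetilde{D}_T\mid\mathcal{X})$.

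With these two pieces in hand, \eqref{test_theo_level} follows: on any $F^{(d)} \subset \Omega_T^{(d)}$ with $\mathbb{P}(F^{(d)})>0$, the $\mathcal{X}$-stable convergence of $n\widetilde{\Phi}_{n,T}^{(d)}\sqrt{V^{(1)}V^{(2)}} - A_{n,T}$ to $\widetilde{C}_T + \widetilde{D}_T$, the fact that the threshold $Q_{n,T}(1-\alpha) \to q_{1-\alpha}(\widetilde{D}_T\mid\mathcal{X})$ is asymptotically $\mathcal{X}$-measurable, and the continuity of the conditional distribution function of $\widetilde{C}_T + \widetilde{D}_T$ at $\widetilde{C}_T + q_{1-\alpha}(\widetilde{D}_T\mid\mathcal{X})$ together give $\widetilde{\mathbb{P}}(\widetilde{\Phi}_{n,T}^{(d)} > \mathpzc{c}_n \mid F^{(d)}) \to \widetilde{\mathbb{P}}(\widetilde{C}_T + \widetilde{D}_T > \widetilde{C}_T + q_{1-\alpha}(\widetilde{D}_T\mid\mathcal{X}) \mid F^{(d)}) = \alpha$, using that $\widetilde{D}_T$ conditionally on $\mathcal{X}$ has a continuous distribution. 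For the consistency claim \eqref{test_theo_power}, note that on $\Omega_T^{(j)}$ Theorem \ref{cons_theo} gives $\widetilde{\Phi}_{n,T}^{(d)} \to \Phi_T^{(d)} > 0$, whereas $\mathpzc{c}_n = (A_{n,T} + Q_{n,T}(1-\alpha))/(n\sqrt{V^{(1)}V^{(2)}})$ is of order $O_{\mathbb{P}}(1/n) \to 0$ (since $A_{n,T}$, $Q_{n,T}$ and $V^{(l)}$ are all $O_{\mathbb{P}}(1)$), so the rejection probability tends to $1$.

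\textbf{Main obstacle.} The technically delicate step is the bootstrap consistency of $Q_{n,T}(1-\alpha)$: one must control simultaneously (i) the replacement of true Brownian increments by the Gaussian variables $U_{n,i,m}^{(l)}$ rescaled by the unknown interval lengths, (ii) the resampling of which interval a jump lands in, encoded by $V_{n,m}^{(l)}(s)$ with its length-proportional law, and (iii) the passage from the conditional weak convergence \eqref{vert_kon1} to convergence of the empirical $(1-\alpha)$-quantile of the aggregate $\widehat{D}_{T,n,m}$ — all of this while keeping track of the two-layer probability space $(\widetilde\Omega,\widetilde{\mathcal A},\widetilde{\mathbb P})$ and the conditioning on $\mathcal{X}$ versus $\mathcal{S}$. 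The remaining steps are adaptations of now-standard truncation and spot-estimation arguments from \cite{JacTod09} and \cite{BibVet15}.
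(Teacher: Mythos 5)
Your proposal follows essentially the same route as the paper: reduce the rejection event to $\{nV(f,\pi_n)_T - A_{n,T}(\beta,\varpi) > Q_{n,T}(1-\alpha)\}$, establish $A_{n,T}(\beta,\varpi)\overset{\mathbb{P}}{\to}\widetilde{C}_T$ and the bootstrap quantile consistency $Q_{n,T}(1-\alpha)\to Q(1-\alpha)$ (the paper's Lemma 6.5 and Proposition 6.6 carry out exactly the delicate step you flag as the main obstacle), then combine with Theorem 3.2 via stable convergence and the continuity of the $\mathcal{X}$-conditional law of $\widetilde{D}_T$; the power argument via $\mathpzc{c}_n=O_{\widetilde{\mathbb{P}}}(n^{-1})$ is also identical. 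The only blemish is a bookkeeping slip where you state that $nV(f,\pi_n)_T - A_{n,T}$ converges stably to $\widetilde{C}_T+\widetilde{D}_T$ rather than to $\widetilde{D}_T$, but since you shift the comparison threshold by $\widetilde{C}_T$ accordingly, the final limit $\alpha$ comes out correctly.
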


\begin{example}
If the sampling scheme is deterministic, then \eqref{vert_kon1} holds in all situations where a minimal local regularity is assumed. This is in particular the case for the setting of synchronous equidistant observation times as in Example \ref{ext_equid} where our estimator $\widetilde{Q}_{n,T}(\alpha)$ equals the estimator $Z_n^{(d)}(\alpha)$ defined in (5.10) of \cite{JacTod09} for $N_n=M_n$ and any choice of $K_n$ (not necessarily converging to infinity).
\end{example}

\begin{example}\label{poiss2} 
Regarding the Poisson setting from Example \ref{poiss1}, $|\pi_n|_T/b_n \overset{\mathbb{P}}{\longrightarrow}0$ follows from \eqref{Poisson_mesh} for every $b_n=O(n^{-\alpha})$ with $\alpha \in (0,1)$. Showing that \eqref{vert_kon1} holds, however, is rather tedious and postponed to Section \ref{sec:proof}. 
\end{example}

\section{Simulation results} \label{sec:simul}
\def\theequation{5.\arabic{equation}}
\setcounter{equation}{0}

We conduct a simulation study to verify the finite sample properties of the introduced methods. Our benchmark model is the one from Section 6 of \cite{JacTod09}, as we use the same configuration as in their paper to compare our approach to the case of equidistant and synchronous observations. The model for $X$ is given by
\begin{equation*}
\begin{aligned}
&dX^{(1)}_t=X^{(1)}_t \sigma_1 d W^{(1)}_t+\alpha_1 \int_{\R} X^{(1)}_{t-}x_1 \mu_1(dt,dx_1)+\alpha_3 \int_{\R} X^{(1)}_{t-}x_3 \mu_3(dt,dx_3),
\\&dX^{(2)}_t=X^{(2)}_t \sigma_2 d W^{(2)}_t+\alpha_2 \int_{\R} X^{(2)}_{t-}x_2 \mu_2(dt,dx_2)+\alpha_3 \int_{\R} X^{(2)}_{t-}x_3 \mu_3(dt,dx_3),
\end{aligned}
\end{equation*}
where $[W^{(1)},W^{(2)}]_t=\rho t$ and the Poisson measures $\mu_i$ are independent of each other and have predictable compensators $\nu_i$ of the form
\[
\nu_i(dt,dx_i)=\kappa_i\frac{\mathds{1}_{[-h_i,-l_i]\cup [l_i,h_i]}(x_i)}{2(h_i-l_i)}dtdx_i
\]
where $0<l_i<h_i$ for $i=1,2,3$, and the initial values are $X_0=(1,1)^T$. We consider the same twelve parameter settings which were discussed in \cite{JacTod09} of which six allow for common jumps and six do not. In the case where common jumps are possible, we only use the simulated paths which contain common jumps. For the parameters we set $\sigma_1^2=\sigma_2^2=8 \times 10^{-5}$ in all scenarios and choose the parameters for the Poisson measures such that the contribution of the jumps to the total variation remains approximately constant and matches estimations from real financial data (see \cite{HuaTau06}). The parameter settings are summarized in Table \ref{par_settings}.

\begin{table}[b]
\centering
\resizebox{13.3cm}{!} {
\begin{tabular}{lccccccccccccc}
\hline 
 & \multicolumn{13}{c}{Parameters} \\
\cline{2-14}
Case & $\rho$ & $\alpha_1$ & $\kappa_1$ & $l_1$ & $h_1$ & $\alpha_2$ & $\kappa_2$ & $l_1$ & $h_1$ & $\alpha_3$ & $\kappa_3$ & $l_3$ & $h_3$ \\ 
\hline 
I-j & $0.0$ & $0.00$ & • & • & • & $0.00$ & • & • & • & $0.01$ & $1$ & $0.05$ & $0.7484$ \\ 

II-j & $0.0$ & $0.00$ & • & • & • & $0.00$ & • & • & • & $0.01$ & $5$ & $0.05$ & $0.3187$ \\ 

III-j & $0.0$ & $0.00$ & • & • & • & $0.00$ & • & • & • & $0.01$ & $25$ & $0.05$ & $0.1238$ \\ 

I-m & $0.5$ & $0.01$ & $1$ & $0.05$ & $0.7484$ & $0.01$ & $1$ & $0.05$ & $0.7484$ & $0.01$ & $1$ & $0.05$ & $0.7484$ \\ 

II-m & $0.5$ & $0.01$ & $5$ & $0.05$ & $0.3187$ & $0.01$ & $5$ & $0.05$ & $0.3187$ & $0.01$ & $5$ & $0.05$ & $0.3187$ \\ 

III-m & $0.5$ & $0.01$ & $25$ & $0.05$ & $0.1238$ & $0.01$ & $25$ & $0.05$ & $0.1238$ & $0.01$ & $25$ & $0.05$ & $0.1238$ \\ 

I-d0 & $0.0$ & $0.01$ & $1$ & $0.05$ & $0.7484$ & $0.01$ & $1$ & $0.05$ & $0.7484$ & • & • & • & • \\ 

II-d0 & $0.0$ & $0.01$ & $5$ & $0.05$ & $0.3187$ & $0.01$ & $5$ & $0.05$ & $0.3187$ & • & • & • & • \\ 

III-d0 & $0.0$ & $0.01$ & $25$ & $0.05$ & $0.1238$ & $0.01$ & $25$ & $0.05$ & $0.1238$ & • & • & • & • \\ 

I-d1 & $1.0$ & $0.01$ & $1$ & $0.05$ & $0.7484$ & $0.01$ & $1$ & $0.05$ & $0.7484$ & • & • & • & • \\ 

II-d1 & $1.0$ & $0.01$ & $5$ & $0.05$ & $0.3187$ & $0.01$ & $5$ & $0.05$ & $0.3187$ & • & • & • & • \\ 

III-d1 & $1.0$ & $0.01$ & $25$ & $0.05$ & $0.1238$ & $0.01$ & $25$ & $0.05$ & $0.1238$ & • & • & • & • \\ 
\hline 
\end{tabular} }
\caption[]{Parameter settings for the simulation.}
\label{par_settings}
\end{table}

\begin{figure}[t]
\centering
\includegraphics[width=\textwidth]{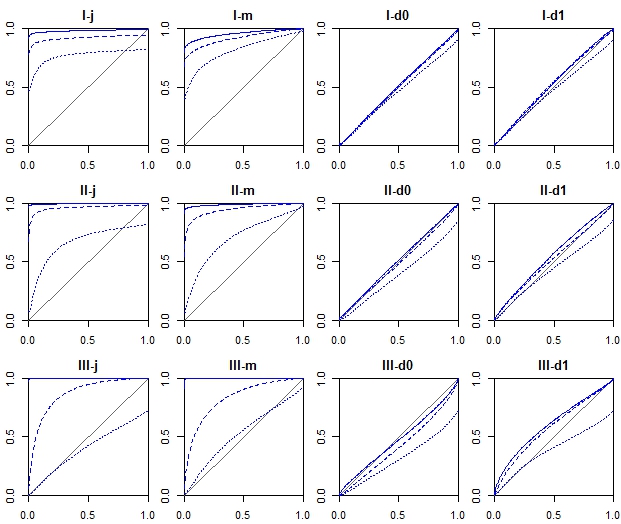}
\caption{Empirical rejection curves from the Monte Carlo simulation for the test derived from Theorem \ref{test_theo}. The dotted line represents the results for $n=100$, the dashed line for $n=400$ and the solid line for $n=1600$. In each case $N=10.000$ paths were simulated.}
\label{rejection_curves}
\end{figure}

To model the observation times we use the Poisson setting discussed in Example \ref{poiss1} and \ref{poiss2} for $\lambda_1=1$ and $\lambda_2=2$, and set $T=1$ which amounts to approximately $n$ observations of $X^{(1)}$ and $2n$ observations of $X^{(2)}$. We choose $n=100,n=400$ and $n=1600$ for the simulation. In a trading day of $6.5$ hours this corresponds to observing $X^{(1)}$ on average every $4$ minutes, every $1$ minute and every $15$ seconds.
 We set $\beta=0.03$ and $\varpi=0.49$ for all occuring truncations. We use $\hat{\sigma}^{(l)}(s)$ as an estimator for $\sigma^{(l)}_s$, $b_n= 1/\sqrt{n}$ for the local interval in the estimation of $\sigma_s^{(l)}$ and $K_n=\lfloor \ln (n)\rfloor$, $M_n=n$ in the simulation of the $\hat{\eta}^{(l)}_{n,m}(s)$.

In Figure \ref{rejection_curves} we display the results from the simulation. The plots are constructed as follows: First for different values of $\alpha$ the critical values are simulated according to Theorem \ref{test_theo}. Then we plot the observed rejection frequencies over $\alpha$. 

The six plots on the left show the results for the cases where the alternative of common jumps is true. In the cases I-j, II-j and III-j there exist only joint jumps and the Brownian motions $W^{(1)}$ and $W^{(2)}$ are uncorrelated. In the cases I-m, II-m and III-m we have a mixed model which allows for disjoint and joint jumps and also the Brownian motions are positively correlated. The prefixes I, II and III indicate an increasing number of jumps present in the observed paths. Since our choice of parameters is such that the overall contribution of the jumps to the quadratic variation is roughly the same in all parameter settings, this corresponds to a decreasing size of the jumps. Hence in the cases I-* we have few big jumps while in the cases III-* we have many small jumps.

We see that the test has very good power against the alternative of common jumps. The power is greater for small $n$ if there are less and bigger jumps as can be seen from the dotted lines for the cases I-j and I-m, because the bigger jumps are detected more easily. On the other hand the power is greater for large $n$ if there are more and smaller jumps which can be seen from the solid lines for III-j and III-m, because then it is more probable that at least one of the common jumps is detected and one small detected common jump is sufficient for rejecting the null. 

The six plots on the right in Figure \ref{rejection_curves} show the results for the cases where the null hypothesis is true. While in the cases *-d0 the Brownian motions $W^{(1)}$ and $W^{(2)}$ are uncorrelated, the Brownian motions are perfectly correlated in the cases *-d1. The prefixes I, II and III stand for an increasing number and a decreasing size of the jumps as in the first six cases. 

Under the null of disjoint jumps we see that the observed rejection frequencies match the predicted asymptotic rejection probabilities from Theorem \ref{test_theo} very well in all six cases. There are slight deviations for a higher number of jumps. This is due to the fact that disjoint jumps whenever they lie close together, sometimes cannot be distinguished based on the observations which leads to over-rejection under the null hypothesis. In the cases *-d1 where the Brownian motions are perfectly correlated the rejection frequencies are systematically too high for large $n$. The results are worse than in the cases *-d0. 

In general, the results from the Monte Carlo match the results from \cite{JacTod09} very closely and we even receive slightly better results in the cases I-d1, II-d1 and especially in case III-d1. This is of great importance, as these results demonstrate that it is possible to construct a test for disjoint jumps which works efficiently in the case of asynchronous and random observations without having to synchronize data first. Such procedures are well-known in the literature, but lead inevitably to a loss of data and, thus, power. Also, our methods are applicable in a quite universal setting without additional knowledge on the underlying observation scheme. 

\section{Proofs} \label{sec:proof}
\def\theequation{6.\arabic{equation}}
\setcounter{equation}{0}

\subsection{Preliminaries}

Throughout the proofs we will assume that the processes $b_s,\sigma_s^{(1)},\sigma_s^{(2)},\rho_s$ and $s \mapsto \delta(s,z)$ are bounded on $[0,T]$. They are continuous by Condition \ref{cond_consistency} and therefore locally bounded. A localization procedure then shows that the results for bounded processes can be carried over to the case of locally bounded processes (see e.g. Section 4.4.1 in \cite{JacPro12}).

We introduce the decomposition $X_t=X_0+B(q)_t+C_t+M(q)_t+N(q)_t$ of the It\^o semimartingale \eqref{ItoSemimart} with
\begin{align*}
B(q)_t&=\int_0^t \big( b_s -\int(\delta(s,z)\mathds{1}_{\{\|\delta\|\leq 1\}}-\delta(s,z)\mathds{1}_{\{\gamma(z)\leq1/q\}})\lambda(dz)\big)ds,
\\C_t &= \int_0^t \sigma_s dW_s,
\\M(q)_t&=\int_0^t \int \delta(s,z) \mathds{1}_{\{\gamma(z)\leq 1/q\}}(\mu-\nu)(ds,dz),
\\N(q)_t&=\int_0^t \int \delta(s,z) \mathds{1}_{\{\gamma(z)>1/q\}}\mu(ds,dz).
\end{align*}
Here $q$ is a parameter which controls whether jumps are classified as small jumps or big jumps. We will make repeatedly use of the following estimates (compare Section 2.1.5 in \cite{JacPro12}).
\begin{lemma}\label{elem_ineq}
There exist constants $K,K_p, K_q,e_q \geq 0$ such that
\begin{align}
&\|B(q)_{s+t}-B(q)_s\|^2 \leq K_q t^2, \nonumber
\\ &\mathbb{E} \big[ \|C_{s+t}-C_{s}\|^p|\mathcal{F}_s\big] \leq K_p t^{p/2}, \label{elem_ineq_C}
\\ &\mathbb{E} \big[ \|M(q)_{s+t}-M(q)_{s}\|^2|\mathcal{F}_s\big] \leq K t e_q, \nonumber
\\ &\mathbb{E} \big[ \|N(q)_{s+t}-N(q)_{s}\|^2|\mathcal{F}_s\big] \leq K_q t, \nonumber
\end{align}
for all $s , t \geq 0$, $q >0$, $p \geq 1$. Here, $e_q$ can be chosen such that $e_q \rightarrow 0$ for $q \rightarrow \infty$.
\end{lemma}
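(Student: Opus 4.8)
The plan is to prove the four estimates one at a time, using only the standing boundedness assumptions from the Preliminaries together with $\|\delta(s,z)\|\le\gamma(z)$ and $\int(1\wedge\gamma^2(z))\lambda(dz)<\infty$; throughout I would take $q\ge 1$, write $\Gamma:=\sup_z\gamma(z)<\infty$, and restrict to $s+t\le T$, which is harmless here. The whole argument follows the template of Section 2.1.5 of \cite{JacPro12}, so the emphasis is on which elementary tool handles which term.

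For the drift term $B(q)$ the key observation is the cancellation of the two truncations: on $\{\gamma(z)\le 1/q\}$ with $q\ge1$ one has $\|\delta(s,z)\|\le\gamma(z)\le 1$, hence $\mathds{1}_{\{\|\delta(s,z)\|\le1\}}=\mathds{1}_{\{\gamma(z)\le1/q\}}=1$, so the $\lambda$-integral appearing in $B(q)$ is supported on $\{\gamma(z)>1/q\}$ and bounded in norm by $\int(1\wedge\gamma(z))\mathds{1}_{\{\gamma(z)>1/q\}}\lambda(dz)$, which is finite since $(1\wedge\gamma)\mathds{1}_{\{\gamma>1/q\}}\le q\Gamma\,(1\wedge\gamma^2)$ up to a constant. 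Combined with boundedness of $b$, this makes the integrand of $B(q)$ bounded by some $c_q<\infty$, so $\|B(q)_{s+t}-B(q)_s\|\le c_q t$ and squaring yields the first bound with $K_q=c_q^2$. For $C$, since $\sigma$ is bounded, $C$ is a continuous $L^p$-martingale and the Burkholder--Davis--Gundy inequality applied $\mathcal{F}_s$-conditionally gives $\mathbb{E}[\|C_{s+t}-C_s\|^p\mid\mathcal{F}_s]\le c_p\,\mathbb{E}[(\int_s^{s+t}\|\sigma_u\|^2du)^{p/2}\mid\mathcal{F}_s]\le c_p(\|\sigma\|_\infty^2 t)^{p/2}$, which is the second estimate.

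For $M(q)$, which is a purely discontinuous $L^2$-martingale, the predictable quadratic variation formula gives $\mathbb{E}[\|M(q)_{s+t}-M(q)_s\|^2\mid\mathcal{F}_s]\le\mathbb{E}[\int_s^{s+t}\int\|\delta(u,z)\|^2\mathds{1}_{\{\gamma(z)\le1/q\}}\lambda(dz)\,du\mid\mathcal{F}_s]\le t\,e_q$ with $e_q:=\int\gamma^2(z)\mathds{1}_{\{\gamma(z)\le1/q\}}\lambda(dz)$; since $\gamma^2\mathds{1}_{\{\gamma\le1/q\}}\le 1\wedge\gamma^2$ and decreases to $0$ pointwise as $q\to\infty$, dominated convergence yields $e_q\to0$. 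Finally for $N(q)$ I would split $N(q)_t=\int_0^t\int\delta(s,z)\mathds{1}_{\{\gamma(z)>1/q\}}(\mu-\nu)(ds,dz)+\int_0^t\int\delta(s,z)\mathds{1}_{\{\gamma(z)>1/q\}}\lambda(dz)\,ds$: the compensated part is treated as $M(q)$ was, now with conditional second moment $\le t\int\gamma^2\mathds{1}_{\{\gamma>1/q\}}\lambda(dz)$, finite because $\lambda(\{\gamma>1/q\})\le\max(1,q^2)\int(1\wedge\gamma^2)\lambda(dz)<\infty$; the absolutely continuous part has a $q$-dependent but bounded integrand, hence is Lipschitz in $t$, and $(a+b)^2\le 2a^2+2b^2$ together with $t^2\le Tt$ on $[0,T]$ collects everything into $K_q t$.

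There is essentially no obstacle: the only points requiring a little care are the cancellation of the two truncations inside $B(q)$ on $\{\gamma\le1/q\}$ (this is exactly what makes $B(q)$ a genuine finite-variation term with a finite, though $q$-dependent, Lipschitz constant) and the dominated-convergence step giving $e_q\to0$, which is the single asymptotic — as opposed to purely quantitative — assertion in the lemma; the rest is bookkeeping with BDG and the compensator formula.
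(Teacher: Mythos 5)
Your proof is correct and follows exactly the standard route the paper itself points to (Section 2.1.5 of Jacod--Protter, which the paper cites in lieu of a proof): boundedness of the integrand of $B(q)$ via the cancellation of the two truncations, conditional BDG for $C$, the compensator isometry plus dominated convergence for $M(q)$ and $e_q\to0$, and a martingale-plus-drift split for $N(q)$. The only cosmetic caveats are your (harmless) restrictions to $q\ge1$ and $s+t\le T$, the latter needed for the $t^2\le Tt$ step in the $N(q)$ bound and consistent with the localization to $[0,T]$ made at the start of the Preliminaries.
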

Throughout the proofs $K$ and $K_q$ will denote generic constants, the latter dependent on $q$, to simplify notation. 

\subsection{Proof of the consistency result}

\begin{proof}[Proof of \eqref{cons_Vf}]
We will show
\begin{gather}\label{proof_cons1}
\lim_{q \rightarrow \infty} \limsup_{n \rightarrow \infty} \mathbb{P} \big(\big| 
\sum_{i,j:t_{i,n}^{(1)} \wedge t_{j,n}^{(2)}\leq T} \big(\Delta_{i,n}^{(1)} N^{(1)}(q)\Delta_{j,n}^{(2)} N^{(2)}(q) \big)^2\mathds{1}_{\{\mathcal{I}_{i,n}^{(1)} \cap \mathcal{I}_{j,n}^{(2)}\neq \emptyset \}}
-B_T\big|>\delta \big)\rightarrow 0 
\end{gather}
and
\begin{gather}\label{proof_cons2}
\lim_{q \rightarrow \infty} \limsup_{n \rightarrow \infty} \mathbb{P} \big(\big| V(f,\pi_n)_T-
\sum_{i,j:t_{i,n}^{(1)} \wedge t_{j,n}^{(2)}\leq T} \big(\Delta_{i,n}^{(1)} N^{(1)}(q)\Delta_{j,n}^{(2)} N^{(2)}(q) \big)^2\mathds{1}_{\{\mathcal{I}_{i,n}^{(1)} \cap \mathcal{I}_{j,n}^{(2)}\neq \emptyset \}}
\big|>\delta \big)\rightarrow 0
\end{gather}
for all $\delta>0$ from which \eqref{cons_Vf} follows. 

For proving \eqref{proof_cons1} we denote by $\Omega(n,q)$ the set on which two different jumps of $N(q)$ are further apart than $2|\pi_n|_T$. On $\Omega(n,q)$ we have
\begin{gather}\label{proof_cons1.1}
\sum_{i,j:t_{i,n}^{(1)} \wedge t_{j,n}^{(2)}\leq T} \big(\Delta_{i,n}^{(1)} N^{(1)}(q)\Delta_{j,n}^{(2)} N^{(2)}(q) \big)^2\mathds{1}_{\{\mathcal{I}_{i,n}^{(1)} \cap \mathcal{I}_{j,n}^{(2)}\neq \emptyset \}}
=\sum_{s \leq T} \big(\Delta N^{(1)}(q)_s \big)^2 \big(\Delta N^{(2)}(q)_s \big)^2.
\end{gather}
Note that the right hand side of \eqref{proof_cons1.1} converges to $B_T$ for $q\rightarrow \infty$. Thus, \eqref{proof_cons1} follows since $\mathbb{P}(\Omega(n,q))\rightarrow 1$ for $n \rightarrow \infty$. 

For proving \eqref{proof_cons2} we introduce the elementary inequality 
\begin{multline}\label{algebr_ineq}
 \big| \big( a_1+b_1+c_1+d_1 \big)^2 (a_2+b_2+c_2+d_2)^2-{d_1}^2 {d_2}^2 \big|
\\ \leq c_{\rho} \sum_{l=1,2} \big( {a_{3-l}}^2+{b_{3-l}}^2+{c_{3-l}}^2 \big)\big( {a_l}^2+{b_l}^2+{c_l}^2+{d_l}^2 \big)+ 3 \rho {d_1}^2 {d_2}^2
\end{multline}
which can be proven using Cauchy-Schwarz inequality after introducing appropriate weights and holds for real numbers $a_l,b_l,c_l,d_l\in \mathbb{R}$, $l=1,2$, and $\rho \in (0,1)$ by setting $c_{\rho}=9 \big(1+\rho \big)^2/\rho^2$. As we are interested in the sum of the product of the squared increments of $X^{(1)}$ and $X^{(2)}$, we can simplify each summand by applying (\ref{algebr_ineq}), i.e.\ we set   $a_l=\Delta_{i,n}^{(l)}B^{(l)}(q),b_l=\Delta_{i,n}^{(l)}C^{(l)},c_l=\Delta_{i,n}^{(l)}M^{(l)}(q),d_l=\Delta_{i,n}^{(l)}N^{(l)}(q)$.

Note that
\[
3 \rho \sum_{i,j:t_{i,n}^{(1)} \wedge t_{j,n}^{(2)}\leq T} \big(\Delta_{i,n}^{(1)} N^{(1)}(q)\Delta_{j,n}^{(2)} N^{(2)}(q) \big)^2\mathds{1}_{\{\mathcal{I}_{i,n}^{(1)} \cap \mathcal{I}_{j,n}^{(2)}\neq \emptyset \}}
 \rightarrow 3 \rho \big[N^{(1)}(q),N^{(2)}(q)  \big]_T
\]
which tends to zero for $\rho \rightarrow 0$. Furthermore, for any $l=1,2$,
\begin{small}
\begin{align*}
&c_\rho \sum_{i,j:t_{i,n}^{(3-l)} \wedge t_{j,n}^{(l)}\leq T} \big(\big(\Delta_{i,n}^{(3-l)} B^{(3-l)}(q)\big)^2 +\big(\Delta_{i,n}^{(3-l)} M^{(3-l)}(q) \big)^2\big) \times \mathds{1}_{\{\mathcal{I}_{i,n}^{(3-l)} \cap \mathcal{I}_{j,n}^{(l)}\neq \emptyset \}}
\\ &~~~~\times
\big(\big(\Delta_{j,n}^{(l)} B^{(l)}(q)\big)^2+
\big(\Delta_{j,n}^{(l)} C^{(l)}\big)^2+ \big(\Delta_{j,n}^{(l)} M^{(l)}(q) \big)^2+\big(\Delta_{j,n}^{(l)} N^{(l)}(q) \big)^2\big)
\\ &~~\leq c_\rho \big( \sum_{i:t_{i,n}^{(3-l)}\leq T} \big(\big(\Delta_{i,n}^{(3-l)} B^{(3-l)}(q)\big)^2 +\big(\Delta_{i,n}^{(3-l)} M^{(3-l)}(q) \big)^2\big) \big)
\\&~~~~ \times \big(\sum_{j:t_{i,n}^{(l)}\leq T}\big( \big(\Delta_{j,n}^{(l)} B^{(l)}(q)\big)^2+
\big(\Delta_{j,n}^{(l)} C^{(l)}\big)^2+ \big(\Delta_{j,n}^{(l)} M^{(l)}(q) \big)^2+\big(\Delta_{j,n}^{(l)} N^{(l)}(q) \big)^2   \big)
\\&~~ \overset{\mathbb{P}}{\longrightarrow} c_\rho\big( [B^{(3-l)}(q),B^{(3-l)}(q)]_T+ [M^{(3-l)}(q),M^{(3-l)}(q)]_T
\big)\big[X^{(l)},X^{(l)}\big]_T
\end{align*}
\end{small}which tends to zero for $q \rightarrow \infty$. For the remaining terms we set
\begin{align*}
K(l,\varepsilon)= \sup_{0\leq t_0 \leq t_1 \leq \ldots \leq t_m \leq T, |t_m-t_0| \leq \varepsilon} \sum_{k=1}^m \big(C^{(l)}_{t_{k}}-C^{(l)}_{t_{k-1}} \big)^2, \quad l=1,2.
\end{align*}
We have $K(l,\varepsilon)\overset{\mathbb{P}}{\longrightarrow} 0$ for $\varepsilon \rightarrow 0$ due to the ucp convergence of realized volatility to the quadratic variation. 
Using the fact that the total length of the observation intervals of one process which overlap with a specific observation interval of the other process is at most $3|\pi_n|_T$, we get on the set $\{|\pi_n|_T\leq \varepsilon\} $
\begin{small}
\begin{align*}
c_\rho& \sum_{i,j:t_{i,n}^{(3-l)} \wedge t_{j,n}^{(l)}\leq T} \big(\Delta_{i,n}^{(3-l)} C^{(3-l)}\big)^2 \big(\big(\Delta_{j,n}^{(l)} C^{(l)} \big)^2 +\big(\Delta_{j,n}^{(l)} N^{(l)}(q) \big)^2  \big) \mathds{1}_{\{\mathcal{I}_{i,n}^{(3-l)} \cap \mathcal{I}_{j,n}^{(l)}\neq \emptyset \}} 
\\&\leq c_\rho K(3-l,3\varepsilon) \sum_{j: t_{j,n}^{(l)}\leq T} \big(\big(\Delta_{j,n}^{(l)} C^{(l)} \big)^2 +\big(\Delta_{j,n}^{(l)} N^{(l)}(q) \big)^2  \big). 
\end{align*}
\end{small}As the latter sum converges to the quadratic variation of $C^{(l)}+N^{(l)}$, we obtain that these terms vanish as well since $K(3-l,3\varepsilon)\overset{\mathbb{P}}{\longrightarrow} 0$ for $\varepsilon \rightarrow 0$ and $\mathbb{P}(|\pi_n|_T\leq \varepsilon)\rightarrow 1$ as $n \rightarrow \infty$ for any fixed $\varepsilon>0$.
\end{proof}

\begin{proof}[Proof of Theorem \ref{cons_theo}] This is a direct consequence of \eqref{cons_Vf} and the continuous mapping theorem for convergence in probability, as \eqref{cons_Vf} implies \eqref{cons_Vg}.
\end{proof}

\subsection{Proof of the central limit theorem}
We will prove the central limit theorem in three parts: We will begin with the convergence of the mixed Brownian increments to the continuous term in the limit (Proposition \ref{clt_prop1}), followed by the convergence of the mixed term of large jumps and Brownian increments to the mixed term in the limit (Proposition \ref{clt_prop2}), and we end with the convergence of the remaining terms to zero (Proposition \ref{clt_prop3}).

\begin{proposition}\label{clt_prop1}
If Condition \ref{cond_centr}(i)-(ii) is fulfilled, we have
\begin{multline*}
n\sum_{i,j:t_{i,n}^{(1)} \wedge t_{j,n}^{(2)} \leq T} \big((\Delta_{i,n}^{(1)} C^{(1)} )^2(\Delta_{j,n}^{(2)} C^{(2)} )^2\big)
\mathds{1}_{\{\mathcal{I}_{i,n}^{(1)} \cap\mathcal{I}_{j,n}^{(2)} \neq \emptyset\}} 
\\ \overset{\mathbb{P}}{\longrightarrow}
\int_0^T \big(2(\rho_s\sigma_s^{(1)}\sigma_s^{(2)})^2G'(s) +(\sigma_s^{(1)}\sigma_s^{(2)})^2 H'(s) \big)ds.
\end{multline*}
\end{proposition}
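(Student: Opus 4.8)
The plan is to reduce the left-hand side, by successive localisation and freezing arguments, to a sum of \emph{conditional expectations} of products of squared Brownian increments, to evaluate these by Gaussian (Isserlis-type) calculus, and finally to match the resulting deterministic quantities with the functions $G_n,H_n$ from Condition \ref{cond_centr}(ii). Fix a deterministic grid $0=s_0<s_1<\dots<s_{N_\ell}=T$ of mesh $\ell$. I would first replace $\Delta_{i,n}^{(l)}C^{(l)}$ by the frozen increment $\sigma^{(l)}_{s_{m-1}}\Delta_{i,n}^{(l)}W^{(l)}$, where $[s_{m-1},s_m)$ is the block containing $\mathcal{I}_{i,n}^{(l)}$, and simultaneously freeze $(\sigma^{(l)}_s)^2$ and $\rho_s$ at $s_{m-1}$ in the target integral. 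Using $\Delta_{i,n}^{(l)}C^{(l)}-\sigma^{(l)}_{s_{m-1}}\Delta_{i,n}^{(l)}W^{(l)}=\int_{\mathcal{I}_{i,n}^{(l)}}(\sigma^{(l)}_s-\sigma^{(l)}_{s_{m-1}})\,dW^{(l)}_s$, whose conditional second moment is at most $|\mathcal{I}_{i,n}^{(l)}|\,\mathbb{E}[w(\ell)^2]$ with $w(\ell)$ the modulus of continuity of $(\sigma^{(1)},\sigma^{(2)},\rho)$ on $[0,T]$ (bounded, hence $\mathbb{E}[w(\ell)^2]\to 0$ as $\ell\to 0$), together with the elementary identity $a^2b^2-a'^2b'^2=(a-a')(a+a')b^2+a'^2(b-b')(b+b')$, the Cauchy--Schwarz inequality, Lemma \ref{elem_ineq}, and the a priori bound $n\sum_{i,j}|\mathcal{I}_{i,n}^{(1)}||\mathcal{I}_{j,n}^{(2)}|\mathds{1}_{\{\mathcal{I}_{i,n}^{(1)}\cap\mathcal{I}_{j,n}^{(2)}\neq\emptyset\}}=H_n(T)+o_{\mathbb P}(1)=O_{\mathbb P}(1)$, one shows that the committed error is $O_{\mathbb P}(w(\ell)^{1/2})$, negligible after $n\to\infty$ then $\ell\to 0$. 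Pairs $(i,j)$ whose intervals straddle a grid point $s_m$ contribute at most $Kn(T/\ell)|\pi_n|_T^2$, which vanishes since $|\pi_n|_T\le T$ gives $n\mathbb{E}[|\pi_n|_T^2]\le T^{1/2}\,n\mathbb{E}[|\pi_n|_T^{3/2}]\to 0$ by Condition \ref{cond_centr}(i).

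\textbf{Conditional means.} Because the observation times are exogenous, conditionally on $\mathcal{S}$ the increments of the Brownian motions over the (then deterministic) observation intervals are jointly Gaussian; to handle the fact that $(W^{(1)},W^{(2)})$ need not be jointly Gaussian when $\rho$ is random I would write $W^{(2)}_t=\int_0^t\rho_s\,dW^{(1)}_s+\int_0^t\sqrt{1-\rho_s^2}\,d\widetilde W_s$ with $\widetilde W$ a Brownian motion independent of $W^{(1)}$. A short Isserlis computation then gives, for intervals $\mathcal{I}_{i,n}^{(1)},\mathcal{I}_{j,n}^{(2)}$ inside block $m$ and for the frozen increments,
\[
\mathbb{E}\big[(\sigma^{(1)}_{s_{m-1}}\Delta_{i,n}^{(1)}W^{(1)})^2(\Delta_{j,n}^{(2)}C^{(2),\mathrm{fr}})^2\,\big|\,\mathcal{F}_{s_{m-1}}\vee\mathcal{S}\big]=(\sigma^{(1)}_{s_{m-1}})^2(\sigma^{(2)}_{s_{m-1}})^2\big(|\mathcal{I}_{i,n}^{(1)}||\mathcal{I}_{j,n}^{(2)}|+2\rho_{s_{m-1}}^2|\mathcal{I}_{i,n}^{(1)}\cap\mathcal{I}_{j,n}^{(2)}|^2\big),
\]
using $\mathbb{E}[A^2B^2]=|\mathcal{I}_{i,n}^{(1)}||\mathcal{I}_{j,n}^{(2)}|+2|\mathcal{I}_{i,n}^{(1)}\cap\mathcal{I}_{j,n}^{(2)}|^2$ for the bivariate Gaussian $(A,B)=(\Delta_{i,n}^{(1)}W^{(1)},\Delta_{j,n}^{(2)}W^{(1)})$ and independence of $\Delta_{j,n}^{(2)}\widetilde W$. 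Multiplying by $n$, summing over pairs in block $m$ and over $m$, and observing (via the bijection between merged intervals and overlapping pairs, which identifies $\Delta^{n,l,-}_{k-1}+\Delta_k^n+\Delta^{n,l,+}_k$ with the length of the enclosing $\mathcal I^{(l)}$-interval) that the restrictions of $n\sum_{i,j}|\mathcal{I}_{i,n}^{(1)}\cap\mathcal{I}_{j,n}^{(2)}|^2\mathds{1}_{\{\cdots\}}$ and $n\sum_{i,j}|\mathcal{I}_{i,n}^{(1)}||\mathcal{I}_{j,n}^{(2)}|\mathds{1}_{\{\cdots\}}$ to $[s_{m-1},s_m]$ equal $G_n(s_m)-G_n(s_{m-1})$ and $H_n(s_m)-H_n(s_{m-1})$ up to the negligible boundary pairs, Condition \ref{cond_centr}(ii) yields convergence (as $n\to\infty$) to $\sum_m(\sigma^{(1)}_{s_{m-1}}\sigma^{(2)}_{s_{m-1}})^2(H(s_m)-H(s_{m-1}))+2\sum_m(\rho_{s_{m-1}}\sigma^{(1)}_{s_{m-1}}\sigma^{(2)}_{s_{m-1}})^2(G(s_m)-G(s_{m-1}))$. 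Since $G,H$ are continuously differentiable, letting $\ell\to 0$ turns this Riemann sum into $\int_0^T\big(2(\rho_s\sigma_s^{(1)}\sigma_s^{(2)})^2G'(s)+(\sigma_s^{(1)}\sigma_s^{(2)})^2H'(s)\big)ds$, the asserted limit.

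\textbf{Fluctuations.} It remains to show that the frozen sum concentrates at its block-conditional means. Write this difference as $\sum_m D_m$, where $D_m$ is the centred version of the block-$m$ contribution; then $D_m$ is a martingale difference for $(\mathcal{F}_{s_m}\vee\mathcal{S})_m$, so $\mathbb{E}[(\sum_mD_m)^2\,|\,\mathcal{S}]=\sum_m\mathbb{E}[D_m^2\,|\,\mathcal{S}]$. For each pair $(i,j)$ only "nearby" pairs $(i',j')$ (those whose intervals lie within $O(|\pi_n|_T)$) contribute to $\mathbb{E}[D_m^2\,|\,\mathcal{S}]$ by independence of Brownian increments over disjoint intervals; since the total length of the $\mathcal{I}^{(1)}$-intervals near $\mathcal{I}_{i,n}^{(1)}$ is $O(|\pi_n|_T)$ and each overlaps $\mathcal{I}^{(2)}$-intervals of total length $\le 3|\pi_n|_T$, the Cauchy--Schwarz bound $|\mathrm{Cov}|\le K|\mathcal{I}_{i,n}^{(1)}||\mathcal{I}_{j,n}^{(2)}||\mathcal{I}_{i',n}^{(1)}||\mathcal{I}_{j',n}^{(2)}|$ gives $\mathbb{E}[D_m^2\,|\,\mathcal{S}]\le Kn\,|\pi_n|_T^2\,(H_n(s_m)-H_n(s_{m-1}))$. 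Summing over $m$ bounds the whole quantity by $Kn\,|\pi_n|_T^2\,H_n(T)$, which tends to $0$ in probability by Condition \ref{cond_centr}(i) (as in the first paragraph, using that $H_n(T)$ is bounded in probability); taking expectations in $\mathcal{S}$ completes the step. Combining the three steps in the order $n\to\infty$, then $\ell\to 0$, proves the proposition.

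\textbf{Main obstacle.} The conceptually new input is merely the orthogonal decomposition of $W^{(2)}$ before the Gaussian moment computation; the real work is the error bookkeeping — simultaneously controlling the path-freezing and coefficient-freezing errors and the conditional variance in terms of $n\,\mathbb{E}[|\pi_n|_T^{3/2}]$ and $w(\ell)$, while carefully accounting for the overlap structure of the Hayashi--Yoshida double sum and for the pairs straddling block boundaries. That is where I expect the bulk of the technical effort to lie.
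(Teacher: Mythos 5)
Your proposal is correct and follows the same architecture as the paper's proof: freeze $\sigma$ and $\rho$ on blocks of a coarse grid, compute the block-conditional expectations of the Hayashi--Yoshida double sum (which, via the bijection between merged intervals and overlapping pairs, produce exactly the increments of $G_n$ and $H_n$ multiplied by the frozen coefficients), pass to the limit with Condition \ref{cond_centr}(ii) and a Riemann sum, and control the freezing error and the fluctuations separately. The one place where you genuinely diverge is the fluctuation step. The paper invokes Lemma 2.2.12 of Jacod--Protter and bounds the conditional variance of each block crudely by Cauchy--Schwarz, obtaining $\sum_k\mathbb{E}[|\xi_k^n|^2\,|\,\mathcal{G}_{k-1}^n]\leq K\,H_n(T)\sup_{|u-s|\leq T2^{-r_n}}|H_n(u)-H_n(s)|$; this bound only vanishes if the blocks shrink, which forces the introduction of an auxiliary grid $r_n\to\infty$ calibrated against $\sup|G_n-G|$, $\sup|H_n-H|$ and $n(|\pi_n|_T)^2$. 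You instead keep the grid fixed and exploit the locality of Brownian increments: only pairs $(i',j')$ whose intervals meet those of $(i,j)$ contribute to the covariance, and since the relevant intervals have total length $O(|\pi_n|_T)$ this yields the sharper bound $Kn(|\pi_n|_T)^2H_n(T)$, which vanishes directly by Condition \ref{cond_centr}(i) since $n\mathbb{E}[(|\pi_n|_T)^2]\leq T^{1/2}n\mathbb{E}[(|\pi_n|_T)^{3/2}]\to 0$. Your route buys a self-contained argument without the auxiliary sequence $r_n$ (and makes the Isserlis computation behind the coefficients $2\rho^2$ and $1$ explicit via the orthogonal decomposition of $W^{(2)}$, which the paper leaves implicit); the paper's route buys brevity by outsourcing the convergence to a standard lemma. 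Only minor care is still needed on your side in treating the modulus of continuity $w(\ell)$ as random (one should work on the event $\{w(\ell)\leq\delta\}$, as the paper does with $\Omega(r,\delta)$), but this is cosmetic.
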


\begin{proof}[Proof of Proposition \ref{clt_prop1}]
We use a discretization of $\sigma$ given via $\sigma(r)_s=\sigma_{(k-1)T/2^r}$ for $s \in [(k-1)T/2^r,kT/2^r)$, and we we denote the integral of $\sigma(r)$ with respect to the Brownian motion $W$ from \eqref{ItoSemimart} by $C(r)$. Setting
\begin{align*}
&R_n=n\sum_{i,j:t_{i,n}^{(1)} \wedge t_{j,n}^{(2)} \leq T} \big((\Delta_{i,n}^{(1)} C^{(1)} )^2(\Delta_{j,n}^{(2)} C^{(2)} )^2\big)
\mathds{1}_{\{\mathcal{I}_{i,n}^{(1)} \cap\mathcal{I}_{i,n}^{(2)} \neq \emptyset\}}, 
\\&R=\int_0^T \big(2(\rho_s\sigma_s^{(1)}\sigma_s^{(2)})^2G'(s) +(\sigma_s^{(1)}\sigma_s^{(2)})^2 H'(s) \big)ds,
\\&R_n(r)=n\sum_{i,j:t_{i,n}^{(1)} \wedge t_{j,n}^{(2)} \leq T} \big((\Delta_{i,n}^{(1)} C^{(1)}(r) )^2(\Delta_{j,n}^{(2)} C^{(2)}(r) )^2\big)
\mathds{1}_{\{\mathcal{I}_{i,n}^{(1)} \cap\mathcal{I}_{i,n}^{(2)} \neq \emptyset\}}, 
\\&R(r)=\int_0^T \big(2(\rho(r)_s\sigma^{(1)}(r)_s\sigma^{(2)}(r)_s)^2G'(s) +(\sigma^{(1)}(r)_s\sigma^{(2)}(r)_s)^2 H'(s) \big)ds,
\end{align*}
we will prove
\begin{align*}
\lim_{r \rightarrow \infty} \limsup_{n \rightarrow \infty} \mathbb{P} \big( \big|R-R(r) \big|+\big| R(r)-R_n(r)\big|+\big|R_n(r)-R_n \big|>\varepsilon  \big) = 0~~ \forall \varepsilon>0.
\end{align*}
By Condition \ref{cond_consistency}, $\sigma$ is uniformly continuous on $[0,T]$. Thus, $\sigma(r)$ converges uniformly to $\sigma$ for $r \rightarrow \infty$ on $[0,T]$, and we have $\big|R-R(r)\big|\rightarrow 0$ almost surely.

In order to prove $|R(r)-R_n(r)| \overset{\mathbb{P}}{\longrightarrow} 0$ as $n \rightarrow \infty$ we apply Lemma 2.2.12 from \cite{JacPro12} with 
\begin{align*}
\xi_k^n=n\sum_{(i,j) \in L(n,k,T) } \big((\Delta_{i,n}^{(1)} C^{(1)}(r) )^2(\Delta_{j,n}^{(2)} C^{(2)}(r) )^2\big)
\mathds{1}_{\{\mathcal{I}_{i,n}^{(1)} \cap\mathcal{I}_{i,n}^{(2)} \neq \emptyset\}}, 
\end{align*}
$L(n,k,T)=\{(i,j):t_{i-1,n}^{(1)} \vee t_{j-1,n}^{(2)}  \in  [(k-1)T/2^{r_n},kT/2^{r_n})\}$, $k=1,2,\ldots,2^{r_n}$, and $\mathcal{G}_k^n=\sigma\big(\mathcal{F}_{(k-1)T/2^{r_n}} \cup \mathcal{S}\big)$. Here, $r_n$ is a sequence of real numbers with $r_n \geq r$, $r_n \rightarrow \infty$ and
\begin{align}\label{r_n}
&2^{r_n} \sup_{s \in [0,T]} |G(s)-G_n(s)|=o_{\mathbb{P}}(1), \nonumber
\\& 2^{r_n} \sup_{s \in [0,T]} |H(s)-H_n(s)|=o_{\mathbb{P}}(1),
\\&2^{r_n}n (|\pi_n|_T)^2=o_{\mathbb{P}}(1). \nonumber
\end{align}
Such a sequence exists, because $G_n,H_n$ and hence $G,H$ are nondecreasing functions such that the pointwise convergence from Condition \ref{cond_centr}(ii) implies uniform convergence on $[0,T]$ and because of $n (|\pi_n|_T)^2=o_{\mathbb{P}}(1)$ by Condition \ref{cond_centr}(i). Elementary computations then reveal
\begin{align*}
&\mathbb{E}\big[\xi_k^n \big| \mathcal{G}_{k-1}^n  \big]=2(\rho(r)_{(k-1)T/2^{r_n}}\sigma^{(1)}(r)_{(k-1)T/2^{r_n}}\sigma^{(2)}(r)_{(k-1)T/2^{r_n}})^2  \\&~~~~~~~~~~~~~~~~~~~~~~~~~~~~~~~~~~~~~~~~~~~~\times \big( G_n(kT/2^{r_n})-G_n((k-1)T/2^{r_n}) \big) 
\\&~~~~+(\sigma^{(1)}(r)_{(k-1)T/2^{r_n}}\sigma^{(2)}(r)_{(k-1)T/2^{r_n}})^2 \big( H_n(kT/2^{r_n})-H_n((k-1)T/2^{r_n}) \big)
\\&~~~~+O_{\mathbb{P}}(n (|\pi_n|_T)^2).
\end{align*}
In combination with the boundedness of $\sigma$ the previous display implies
\begin{multline*}
\big|R(r)-\sum_{k=1}^{2^{r_n}} \mathbb{E}\big[\xi_k^n \big| \mathcal{G}_{k-1}^n  \big]\big|
\\\leq K2^{r_n}\big(\sup_{s \in [0,T]} |G(s)-G_n(s)|+\sup_{s \in [0,T]} |H(s)-H_n(s)|\big)+O_{\mathbb{P}}\big(  2^{r_n}n(|\pi_n|_T)^2\big)
\end{multline*}
where the right hand side is $o_{\mathbb{P}}(1)$ by \eqref{r_n}. Hence the sum over the $\mathbb{E}\big[\xi_k^n \big| \mathcal{G}_{k-1}^n  \big]$ converges to $R(r)$.

Using the Cauchy-Schwarz inequality, the definition of $H_n$ and telescoping sums we also get
\begin{align*}
\sum_{k=1}^{2^{r_n}}\mathbb{E}\big[\big|\xi_k^n\big|^2 \big| \mathcal{G}_{k-1}^n  \big] &\leq K \sum_{k=1}^{2^{r_n}} \Big( n\sum_{(i,j) \in L(n,k,T) }
\big| \mathcal{I}_{i,n}^{(1)} \big | \big| \mathcal{I}_{j,n}^{(2)} \big| 
\mathds{1}_{\{\mathcal{I}_{i,n}^{(1)} \cap\mathcal{I}_{i,n}^{(2)} \neq \emptyset\}} \Big)^2
\\ &\leq K H_n(T) \sup_{u,s \in [0,T], |u-s|\leq T 2^{-r_n}}\big|H_n(u)-H_n(s)|
\end{align*}
where the right hand side converges to zero in probability, since $H_n$ converges uniformly to a continuously differentiable function $H$. Together with 
$$
\sum_{k=1}^{2^{r_n}} \mathbb{E}\big[\xi_k^n \big| \mathcal{G}_{k-1}^n  \big] \overset{\mathbb{P}}{\longrightarrow} R(r)
$$
we obtain
$$
R_n(r)=\sum_{k=1}^{2^{r_n}} \xi_k^n  \overset{\mathbb{P}}{\longrightarrow} R(r)
$$
by Lemma 2.2.12 from \cite{JacPro12}.

Finally, we have
\begin{multline*}
\big|R_n(r)-R_n\big| \leq n 
\sum_{i,j:t_{i,n}^{(1)} \wedge t_{j,n}^{(2)} \leq T} \bigg(
\big|\Delta_{i,n}^{(1)}\big(C^{(1)}-C^{(1)}(r)\big)\big|^2\big|\Delta_{j,n}^{(2)}C^{(2)}  \big|^2 \\
+\big|\Delta_{j,n}^{(2)}\big(C^{(2)}-C^{(2)}(r)\big)\big|^2\big|\Delta_{i,n}^{(1)}C^{(1)}  \big|^2
\bigg)
\mathds{1}_{\{\mathcal{I}_{i,n}^{(1)} \cap\mathcal{I}_{i,n}^{(2)} \neq \emptyset\}}. 
\end{multline*}
Once we take conditional expectation with respect to $\mathcal{S}$ and apply Cauchy-Schwarz as well as inequality \eqref{elem_ineq_C}, we obtain on the set $\Omega(r,\delta)=\{\sup_{s \in [0,T]} \|\sigma_s-\sigma(r)_s\|\leq \delta\}$
\begin{align*}
\mathbb{E}\big[\big|R_n(r)-R(n)\big| \big| \mathcal{S} \big] \leq K \delta^2 H_n(T).
\end{align*}
We have $H_n(T) \rightarrow H(T)$ for $n \rightarrow \infty$ and $\mathbb{P}(\Omega(r,\delta)) \rightarrow 1$ for $r \rightarrow \infty$ and all $\delta>0$. Hence we get 
\begin{align*}
\lim_{r \rightarrow \infty} \limsup_{n \rightarrow \infty} \mathbb{P}(|R_n(r)-R_n|> \varepsilon)\rightarrow 0.
\end{align*}
\end{proof}

\begin{proposition}\label{clt_prop2} If Condition \ref{cond_centr}(iii) is fulfilled, we have on $\Omega_T^{(d)}$ the $\mathcal{X}$-stable convergence
\begin{small}
\begin{multline*}
n\sum_{t_{i,n}^{(1)} \wedge t_{j,n}^{(2)} \leq T} \big(\Delta_{i,n}^{(1)} N^{(1)}(q) \big)^2\big(\Delta_{j,n}^{(2)} C^{(2)} \big)^2+\big((\Delta_{i,n}^{(1)} C^{(1)} \big)^2\big(\Delta_{j,n}^{(2)} N^{(2)}(q) \big)^2\big)
 \mathds{1}_{\{\mathcal{I}_{i,n}^{(1)} \cap\mathcal{I}_{j,n}^{(2)} \neq \emptyset\}}
\\\overset{\mathcal{L}-s}{\longrightarrow}
\sum_{p:S_p \leq T} \big( \big(\Delta X_{S_p}^1\big)^2\big(\sigma_{S_p}^{(2)}\big)^2 \eta^{(2)}(S_p)
+\big(\Delta X_{S_p}^2\big)^2\big(\sigma_{S_p}^{(1)}\big)^2 \eta^{(1)}(S_p) \big)
\end{multline*}
\end{small}as $n \rightarrow \infty$ and then $q \rightarrow \infty$. Here, $\big(S_p\big)_{p \in \mathbb{N}}$ is an enumeration of the jump times of $X$, and the $\eta^{(l)}(S_p)$ are distributed according to
\begin{align*}
\eta^{(l)}(x) \sim \Gamma^{(l)}(x,dy), \quad l=1,2, \quad x \in [0,T],
\end{align*}
where the $\Gamma^{(l)}$ are defined in Condition \ref{cond_centr}(iii). Furthermore, the $\eta^{(l)}(x)$ are mutually independent and independent of $\mathcal{X}$.
\end{proposition}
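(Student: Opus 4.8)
The plan is to fix the truncation level $q$, for which $N^{(1)}(q)$ and $N^{(2)}(q)$ have only finitely many jumps on $[0,T]$, establish the stated stable convergence towards $\widetilde D_T^{(q)}=\sum_{p:S_p\le T}\big((\Delta N^{(1)}(q)_{S_p})^2(\sigma^{(2)}_{S_p})^2\eta^{(2)}(S_p)+(\Delta N^{(2)}(q)_{S_p})^2(\sigma^{(1)}_{S_p})^2\eta^{(1)}(S_p)\big)$, and then let $q\to\infty$, using that $\sum_{p:S_p\le T}(\Delta N^{(l)}(q)_{S_p})^2$ increases to $\sum_{s\le T}(\Delta X^{(l)}_s)^2<\infty$ while the first moments of $\Gamma^{(l)}$ are uniformly bounded, so that $\widetilde D_T^{(q)}\to\widetilde D_T$ in $\widetilde{\mathbb{P}}$-probability. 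For fixed $q$ I would work on the set $\Omega(n,q)$ from the proof of \eqref{cons_Vf}, on which every observation interval of either component contains at most one jump of $N(q)$: there $\Delta_{i,n}^{(1)}N^{(1)}(q)$ equals $\Delta N^{(1)}(q)_{S_p}$ when $S_p\in\mathcal I_{i,n}^{(1)}$ and vanishes otherwise, and similarly for component $2$. Since $\mathbb{P}(\Omega(n,q))\to1$, the first half of the displayed sum then equals, up to a negligible event, $\sum_{p}(\Delta N^{(1)}(q)_{S_p})^2\, n\sum_{j}(\Delta_{j,n}^{(2)}C^{(2)})^2\mathds{1}_{\{\mathcal I^{(1)}_{i_n^{(1)}(S_p),n}\cap\mathcal I^{(2)}_{j,n}\neq\emptyset\}}$, the outer sum running over the big jump times $S_p$ of $N^{(1)}(q)$, and symmetrically for the second half; for $n$ large all the observation intervals occurring here lie inside $[0,T]$, so the boundary restrictions in $V(f,\pi_n)_T$ and in \eqref{def_eta_m} are immaterial.

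The second step is to replace, inside the inner sum, $\Delta_{j,n}^{(2)}C^{(2)}=\int_{\mathcal I^{(2)}_{j,n}}\sigma^{(2)}_s\,dW^{(2)}_s$ by $\sigma^{(2)}_{S_p}\Delta_{j,n}^{(2)}W^{(2)}$. As $\sigma^{(2)}$ is continuous by Condition \ref{cond_consistency} and all intervals meeting $\mathcal I^{(1)}_{i_n^{(1)}(S_p),n}$ lie in a window around $S_p$ of length $O(|\pi_n|_T)$, writing $\Delta_{j,n}^{(2)}C^{(2)}=\sigma^{(2)}_{S_p}\Delta_{j,n}^{(2)}W^{(2)}+\int_{\mathcal I^{(2)}_{j,n}}(\sigma^{(2)}_s-\sigma^{(2)}_{S_p})\,dW^{(2)}_s$, expanding the square, and bounding the $\mathcal S$-conditional moments of the cross and remainder terms via \eqref{elem_ineq_C} and the Cauchy--Schwarz inequality shows that the induced error is $o_{\mathbb{P}}(1)$, once one uses that $n\eta_n^{(2)}(S_p)$ is tight (which follows from Condition \ref{cond_centr}(iii)) and that the oscillation of $\sigma^{(2)}$ over that shrinking window tends to $0$. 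After this step the inner sum is exactly $n\eta_n^{(2)}(S_p)$, and the whole expression has been reduced to $\Xi_n:=\sum_p a_p^{(1)}\, n\eta_n^{(2)}(S_p)+\sum_p a_p^{(2)}\, n\eta_n^{(1)}(S_p)$ with the $\mathcal X$-measurable weights $a_p^{(1)}=(\Delta N^{(1)}(q)_{S_p})^2(\sigma^{(2)}_{S_p})^2$ and $a_p^{(2)}=(\Delta N^{(2)}(q)_{S_p})^2(\sigma^{(1)}_{S_p})^2$. On $\Omega_T^{(d)}$ exactly one of $\Delta N^{(1)}(q)_{S_p},\Delta N^{(2)}(q)_{S_p}$ is nonzero for each $p$, so only one $\eta_n^{(l)}(S_p)$ actually enters per jump time, and the joint law of $\eta^{(1)}(s)$ and $\eta^{(2)}(s)$ at a common point $s$ is never needed.

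The decisive step is the $\mathcal X$-stable convergence $\Xi_n\to\sum_p a_p^{(1)}\eta^{(2)}(S_p)+\sum_p a_p^{(2)}\eta^{(1)}(S_p)$. Because of exogeneity we may work on $(\Omega_\mathcal{X}\times\Omega_\mathcal{S},\mathbb{P}_\mathcal{X}\otimes\mathbb{P}_\mathcal{S})$: for bounded continuous $h$ and bounded $\mathcal X$-measurable $Y$, $\mathbb{E}[h(\Xi_n)Y\mathds{1}_{\Omega_T^{(d)}}]=\int\mathbb{P}_\mathcal{X}(d\omega)\,Y\mathds{1}_{\Omega_T^{(d)}}\,\mathbb{E}_\mathcal{S}[h(\Xi_n)]$, where under $\mathbb{E}_\mathcal{S}$ the jump times $S_p$, the weights $a_p^{(l)}$ and the component labels are frozen. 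The point is that this has exactly the structure of the left side of Condition \ref{cond_centr}(iii): conditioning further on the number of big jumps of $N(q)$ and on the corresponding marks, the jump times admit a density on $[0,T]$ and are independent of $(\pi_n)_n$, so the outer $\mathcal X$-integral runs over these densities; approximating $h$ by finite combinations of characters $e^{iu\,\cdot}$ (legitimate by tightness of $\Xi_n$, as the $a_p^{(l)}$ are bounded and the $n\eta_n^{(l)}$ converge in law by Condition \ref{cond_centr}(iii)) and discretizing the $a_p^{(l)}$ so that the maps $h_p^{(l)}(y)=e^{iu a_p^{(l)}y}$ become deterministic bounded continuous, one matches Condition \ref{cond_centr}(iii) with $g$ absorbing the jump-time density together with $Y\mathds{1}_{\Omega_T^{(d)}}$. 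Condition \ref{cond_centr}(iii) then delivers the limit \eqref{cond_centr_iii2}, whose product form yields mutually independent $\eta^{(l)}(S_p)\sim\Gamma^{(l)}(S_p,dy)$, independent of $\mathcal X$; undoing the discretization and the Fourier approximation gives the claim for fixed $q$, and letting $q\to\infty$ completes the proof.

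The main obstacle is precisely this last step: converting the integrated, purely analytic statement of Condition \ref{cond_centr}(iii) into an assertion about the random jump times $S_p$, and into genuine $\mathcal X$-stable rather than merely distributional convergence. The necessary bookkeeping --- conditioning on the jump count and the marks so that the jump times appear with an explicit density, discretizing the $\mathcal X$-measurable coefficients to expose deterministic test functions, and invoking tightness to reduce to characters --- is routine in spirit but must be carried out carefully. The heuristic that the shrinking neighbourhoods $\mathcal M_n^{(l)}(S_p)$ become pairwise disjoint for large $n$ and that $W$ has independent increments over disjoint intervals explains why the limits $\eta^{(l)}(S_p)$ come out independent; but the random location of the $S_p$ is exactly what forces one to pass through Condition \ref{cond_centr}(iii) rather than argue pathwise.
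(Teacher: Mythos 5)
Your overall strategy coincides with the paper's: identify the sum on a set like $\Omega(n,q)$ with $\sum_p a_p^{(1)} n\eta_n^{(2)}(S_p)+\sum_p a_p^{(2)} n\eta_n^{(1)}(S_p)$, deduce the stable convergence for fixed $q$ from Condition \ref{cond_centr}(iii) using that, conditionally on the jump count, the jump times of the Poisson measure are i.i.d.\ uniform, and then let $q\to\infty$ via the uniformly bounded first moments of the $\Gamma^{(l)}$. Your treatment of $\sigma$ (freezing it at $S_p$ and bounding the error by continuity) is a harmless variant of the paper's discretization $\sigma(r)$ combined with joint stable convergence and the continuous mapping theorem.

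However, there is a genuine gap at exactly the step you call decisive. After the Fubini decomposition $\mathbb{E}[h(\Xi_n)Y\mathds{1}_{\Omega_T^{(d)}}]=\int \mathbb{P}_{\mathcal X}(d\omega)\,Y\mathds{1}_{\Omega_T^{(d)}}\,\mathbb{E}_{\mathcal S}[h(\Xi_n)]$, the inner expectation is taken with the \emph{entire} path of $W$ frozen, whereas the expectation appearing in Condition \ref{cond_centr}(iii) is the unconditional one, over both $W$ and the sampling scheme. The variables $n\eta_n^{(l)}(S_p)$ are built from squared increments of $W$, so for a fixed path of $W$ they do not converge (already in the equidistant case $n(\Delta_{i,n}W)^2$ oscillates pathwise and is $\chi^2_1$ only after averaging over $W$). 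To un-freeze $W$ and recover the form of Condition \ref{cond_centr}(iii) you must integrate over $W$ again, and then a general bounded $\mathcal X$-measurable $Y$ --- which may depend on $W$ arbitrarily --- remains entangled with the $W$-increments entering the $\eta_n^{(l)}(S_p)$; it cannot simply be ``absorbed into $g$'', since $g$ in the condition is a deterministic function of the time points only. This is precisely what the paper's Step 1 is for: it reduces $Y$ to the product form $\gamma(W)\xi(\rho)\kappa((U_{q,p}^{(l)})_{p,l})$, replaces $\gamma(W)$ by $\gamma(W(m))$ with $W(m)$ obtained by excising $1/m$-neighbourhoods of the jump times, and uses the set $\Omega(q,m,n)$ on which $W(m)$ is independent of all the $\eta_n^{(l)}(U_{q,p}^{(l)})$, so that the expectation factorizes and only then can Condition \ref{cond_centr}(iii) be invoked. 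Without this decoupling device (or an equivalent one) your argument establishes at best convergence in law under $\mathbb{P}$, not $\mathcal X$-stable convergence.
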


\begin{proof}[Proof of Proposition \ref{clt_prop2}]
\textit{Step 1.} Denote by $U_{q,p}$ the jump times of $N(q)$ ordered by the size of $\big\|\int_{\mathbb{R}^2}z \mu(U_{q,p},dz)\big\|$. We begin by showing that Condition \ref{cond_centr}(iii) yields the $\mathcal{X}$-stable convergence of all the $\eta_n^{(l)}(U_{q,p})$ to the respective $\eta^{(l)}(U_{q,p})$, i.e.\ we have to show
\begin{multline}\label{stable_eta}
\mathbb{E} \big[\Lambda f\big(\big(
n \eta_n^{(1)}(U^{(1)}_{q,p}) \big)_{U^{(1)}_{q,p} \leq T},\big(n \eta_n^{(2)}(U^{(2)}_{q,p}) \big)_{U^{(2)}_{q,p} \leq T}
\big)  \big]
\\ \rightarrow
\widetilde{\mathbb{E}} \big[\Lambda f\big(\big(
 \eta^{(1)}(U^{(1)}_{q,p}) \big)_{U^{(1)}_{q,p} \leq T},\big( \eta^{(2)}(U^{(2)}_{q,p}) \big)_{U^{(2)}_{q,p} \leq T} \big)\big]
\end{multline}
for all $\mathcal{X}$-measurable bounded random variables $\Lambda$ and all continuous bounded functions $f$. Here $(U^{(l)}_{q,p})_{p \in \mathbb{N}}$ is an enumeration of the jump times of $N^{(l)}(q)$ where the jumps are ordered again by the size of $\big\|\int_{\mathbb{R}^2}z \mu(U^{(l)}_{q,p},dz)\big\|$. We only give a sketch of the proof here and refer for more details to the proofs of Lemma 5.8 in \cite{Jac08} and Lemma 6.2 in \cite{JacPro98}.

By conditioning on the $\sigma$-algebra $\mathcal{G}$ generated by the Brownian motion $W$, the process $\rho$ and the jump times $S_p \leq T$ we see that it is sufficient to prove \eqref{stable_eta} for any $\Lambda$ of the form
\begin{align}\label{clt_prop2_lambda}
\Lambda=\gamma(W)\xi(\rho) \kappa\big((U^{(1)}_{q,p})_{p \in \mathbb{N}},(U^{(2)}_{q,p})_{p \in \mathbb{N}} \big)
\end{align}
with Borel-measurable functions $\gamma,\xi,\kappa$, because the $\eta_n^{(l)}(S_p)$ depend on $\mathcal{X}$ only through $\mathcal{G}$, and all $\mathcal{G}$-measurable random variables can be approximated by random variables of the form \eqref{clt_prop2_lambda}.

Next we set $U_{q,p,m}^{(l,-)}=\max\{U_{q,p}^{(l)}-1/m,0\}$, $U_{q,p,m}^{(l,+)}=\min\{U_{q,p}^{(l)}+1/m,T\}$, and
\begin{align*}
&B(m) := \bigcup_{l=1,2} \bigcup_{U_{q,p}^{(l)}\leq T} \big[U_{q,p,m}^{(l,-)},U_{q,p,m}^{(l,+)}\big],
\\&W(m)_t=\int_0^t \big(1-\mathds{1}_{B(m)}(s)\big)dW(s).
\end{align*} 
Denote by $\Omega(q,m,n)$ the set on which $\mathcal{M}_n^{(l)}(U_{q,p}^{(l)})<1/m$ for all $U_{q,p}^{(l)}\leq T$ and where two different jumps $U_{q,p_1}^{(l_1)},U_{q,p_2}^{(l_2)}\leq T$ are further apart than $2|\pi_n|_T$. On this set the process $W(m)$ is independent of all the $\eta_n^{(l)}(U_{q,p}^{(l)})$ by independence of $W$ and $\mu$, and $\eta_n^{(l_1)}(U_{q,p_1}^{(l_1)})$ and $\eta_n^{(l_2)}(U_{q,p_2}^{(l_2)})$ are independent for $U_{q,p_1}^{(l_1)} \neq U_{q,p_2}^{(l_2)}$ because the corresponding increments of the Brownian motion do not overlap. In particular, as there are no common jumps,  on $\Omega(q,m,n)$ the common distribution of the $\eta_n^{(l)}(U_{q,p}^{(l)})$ is independent of $\rho$ and $W(m)$. This yields
\begin{align*}
&\mathbb{E} \big[\gamma(W)\xi(\rho) \kappa\big((U^{(1)}_{q,p})_{p \in \mathbb{N}},(U^{(2)}_{q,p})_{p \in \mathbb{N}} \big) f\big(\big(\big(
n \eta_n^{(l)}(U^{(l)}_{q,p}) \big)_{U^{(l)}_{q,p} \leq T}\big)_{l=1,2}\big)   \big]
\\&~~= \lim_{m \rightarrow \infty} \lim_{n \rightarrow \infty} \mathbb{E} \big[\mathds{1}_{\Omega(q,m,n)}\gamma(W(m))\xi(\rho)\big] 
\\&~~~~~~~~~~~\times \mathbb{E} \big[\mathds{1}_{\Omega(q,m,n)}\kappa\big((U^{(1)}_{q,p})_{p \in \mathbb{N}},(U^{(2)}_{q,p})_{p \in \mathbb{N}} \big) f\big(\big(\big(
n \eta_n^{(l)}(U^{(l)}_{q,p}) \big)_{U^{(l)}_{q,p} \leq T}\big)_{l=1,2}\big)  \big]
\end{align*}
because of $\mathbb{P}(\Omega(q,m,n))\rightarrow 1$ as $n \to \infty$, and it is sufficient to show
\begin{multline*}
\mathbb{E} \big[\kappa\big((U^{(1)}_{q,p})_{p \in \mathbb{N}},(U^{(2)}_{q,p})_{p \in \mathbb{N}} \big) f \big(\big(\big(
n \eta_n^{(l)}(U^{(l)}_{q,p}) \big)_{U^{(l)}_{q,p} \leq T}\big)_{l=1,2}\big)   \big]
\\\rightarrow \widetilde{\mathbb{E}} \big[\kappa\big((U^{(1)}_{q,p})_{p \in \mathbb{N}},(U^{(2)}_{q,p})_{p \in \mathbb{N}} \big) f\big(\big(\big(
 \eta^{(l)}(U^{(l)}_{q,p}) \big)_{U^{(l)}_{q,p} \leq T}\big)_{l=1,2} \big)  \big].
\end{multline*}
Using the standard metric on an infinite Cartesian product, this is exactly Condition \ref{cond_centr}(iii) as conditional on the event that there are $k_l$ jumps of $N^{(l)}(q)$, $l=1,2$, in $[0,T]$ all the $U_{q,p}^{(l)}$ are independent uniformly distributed on $[0,T]$. Note that we may consider functions again which factorize over the $n \eta_n^{(l)}(U_{q,p}^{(l)})$. Hence we have shown
\begin{align}\label{stable_eta2}
\big(\big(\big( n \eta_n^{(l)}(U_{q,p}^{(l)})\big)_{U_{q,p}^{(l)}\leq T}\big)_{l =1,2}\big)
\overset{\mathcal{L}-s}{\longrightarrow}
\big(\big(\big(  \eta^{(l)}(U_{q,p}^{(l)})\big)_{U_{q,p}^{(l)}\leq T}\big)_{l =1,2}\big).
\end{align}

 \textit{Step 2.}
We reconsider the discretized functions $\sigma(r)$ and $C(r)$ from the proof of Proposition \ref{clt_prop1}. Denote by $\Omega(q,r,n)$ the set where two different jumps $U_{q,p_1}^{(l_1)} \neq U_{q,p_2}^{(l_2)}$ are further apart than $2|\pi_n|_T$ and the jumps $U_{q,p}^{(l)}$ are further apart than $2|\pi_n|_T$ from the discontinuities $k/2^r$ of $\sigma(r)$. On this set we get
\begin{small}
\begin{align}\label{clt_prop2_eqn1}
&n \sum_{i,j:t_{i,n}^{(l)} \wedge t_{j,n}^{(l)} \leq T} \bigg(\big(\Delta_{i,n}^{(1)} N^{(1)}(q) \big)^2\big(\Delta_{j,n}^{(2)} C^{(2)}(r) \big)^2 \nonumber
\\&~~~~~~~~~~~~~~~~~~~~~~~~~~~~~~~~~~~~~~~~~~~~~~~~~~~ +\big((\Delta_{i,n}^{(1)} C^{(1)}(r) \big)^2\big(\Delta_{j,n}^{(2)} N^{(2)}(q) \big)^2\bigg)\mathds{1}_{\{\mathcal{I}_{i,n}^{(1)} \cap\mathcal{I}_{i,n}^{(2)} \neq \emptyset\}} \nonumber
\\&=\sum_{p:U_{q,p} \leq T}
 \bigg(\big(\Delta N^{(1)}(q)_{U_{q,p}} \big)^2\big(\sigma^{(2)}(r)_{U_{q,p}} \big)^2 n \eta_n^{(2)}(U_{q,p}) \nonumber
 \\&~~~~~~~~~~~~~~~~~~~~~~~~~~~~~~~~~~~~~~~~~~~~~~~~~~~+\big(\Delta N^{(2)}(q)_{U_{q,p}} \big)^2\big(\sigma^{(1)}(r)_{U_{q,p}} \big)^2 n \eta_n^{(1)}(U_{q,p})\bigg).
\end{align}
\end{small}Using Proposition 2.2 in \cite{PodVet10} we get from \eqref{stable_eta2} 
\begin{small}
\begin{multline*}
\big(N(q),\sigma(r),\big( \big(U_{q,p}^{(l)}\big)_{U_{q,p}^{(l)}\leq T}\big)_{l =1,2} ,\big(\big( n \eta_n^{(l)}(U_{q,p}^{(l)})\big)_{U_{q,p}^{(l)}\leq T}\big)_{l =1,2} \big)
\\ \overset{\mathcal{L}-s}{\longrightarrow}
\big(N(q),\sigma(r),\big( \big(U_{q,p}^{(l)}\big)_{U_{q,p}^{(l)}\leq T}\big)_{l =1,2} ,\big(\big( \eta^{(l)}(U_{q,p}^{(l)})\big)_{U_{q,p}^{(l)}\leq T}\big)_{l =1,2} \big)
\end{multline*}
\end{small}which yields, using the continuous mapping theorem,
\begin{small}
\begin{multline}\label{stable_conv}
\sum_{p:U_{q,p} \leq T} \sum_{l=1,2}
 \big(\Delta N^{(3-l)}(q)_{U_{q,p}} \big)^2\big(\sigma^{(l)}(r)_{U_{q,p}} \big)^2 n \eta_n^{(l)}(U_{q,p})
\\ \overset{\mathcal{L}-s}{\longrightarrow}
\sum_{p:U_{q,p} \leq T} \sum_{l=1,2}
 \big(\Delta N^{(3-l)}(q)_{U_{q,p}} \big)^2\big(\sigma^{(l)}(r)_{U_{q,p}} \big)^2 \eta^{(l)}(U_{q,p}).
\end{multline}
\end{small}Note that we may replace the left hand side of \eqref{stable_conv} by one of \eqref{clt_prop2_eqn1}, since $\mathbb{P}(\Omega(q,r,n))\rightarrow 1$ as $n \to \infty$.

But the convergence in \eqref{stable_conv} is even preserved if we replace $\sigma(r)$ by $\sigma$, because we get convergence in probability for both sides as $r \rightarrow \infty$: For the left hand side of \eqref{clt_prop2_eqn1} we use that the number of jumps of $N(q)$ and their size is bounded in probability and a similar argument as for the last step in the proof of Proposition \ref{clt_prop1}. For the right hand side of \eqref{stable_conv} we use in addition that the first moments of the $\eta^{(l)}(s)$ are uniformly bounded.
\\ \textit{Step 3.} We have
\begin{small}
\begin{align}\label{clt_prop2_conv_q}
&\sum_{p:S_{p} \leq T} \sum_{l=1,2}
 \big(\Delta X^{(3-l)}_{S_p} \big)^2\big(\sigma^{(l)}_{S_p} \big)^2 \eta^{(l)}(S_p)
-\sum_{p:U_{q,p} \leq T} \sum_{l=1,2}
 \big(\Delta N^{(3-l)}(q)_{U_p} \big)^2\big(\sigma^{(l)}_{U_p} \big)^2 \eta^{(l)}(U_p) \nonumber
 \\&~~~~~~~~~~~~~~~~=\sum_{p:S_{p} \leq T} \sum_{l=1,2}
 \big(\Delta M^{(3-l)}(q)_{S_p} \big)^2\big(\sigma^{(l)}_{S_p} \big)^2 \eta^{(l)}(S_p).
\end{align}
\end{small}Computing the $\mathcal X$-conditional expectation first and applying dominated convergence afterwards, it is easy to see that the right hand side of \eqref{clt_prop2_conv_q} converges to zero in probability as $q \rightarrow \infty$. This finishes the proof of Proposition \ref{clt_prop2}.
\end{proof}

The following lemma is needed for the proof of Proposition \ref{clt_prop3}.

\begin{lemma} \label{clt_lemma1}
Let Condition \ref{cond_consistency} be satisfied. Then there exists a constant $K$ which is independent of $(i,j)$ such that
\begin{align*} 
\mathbb{E} \big[ \big(\Delta_{i,n}^{(l)} C^{(l)} \big)^2\big(\Delta_{j,n}^{(3-l)} M^{(3-l)}(q) \big)^2\big| \mathcal{S} \big] \leq K e_q \big|\mathcal{I}_{i,n}^{(l)}\big|\big|\mathcal{I}_{j,n}^{(3-l)}\big|, \quad l=1,2. 
\end{align*}
On the set $\Omega_T^{(d)}$ we further have
\begin{align} 
\mathbb{E} \big[ \big(\Delta_{i,n}^{(l)} M^{(l)}(q) \big)^2\big(\Delta_{j,n}^{(3-l)} M^{(3-l)}(q') \big)^2\big| \mathcal{S} \big] \leq K e_q e_{q'} \big|\mathcal{I}_{i,n}^{(1)}\big|\big|\mathcal{I}_{j,n}^{(2)}\big|. \label{abschatzung_klein_klein}
\end{align}
\end{lemma}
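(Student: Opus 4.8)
The plan is to condition on $\mathcal S$ and exploit the exogeneity of the observation times: given $\mathcal S$ the intervals $\mathcal I_{i,n}^{(l)}$ and $\mathcal I_{j,n}^{(3-l)}$ are deterministic, so each estimate reduces to a bound, for fixed deterministic $0\le a_1<b_1$ and $0\le a_2<b_2$, of $\mathbb E\big[(A_{b_1}-A_{a_1})^2(A'_{b_2}-A'_{a_2})^2\big]$ by $K e_q e_{q'}(b_1-a_1)(b_2-a_2)$, where $(A,A')=(C^{(l)},M^{(3-l)}(q))$ for the first inequality — the factor $e_{q'}$ then being absent — and $(A,A')=(M^{(l)}(q),M^{(3-l)}(q'))$ for the second. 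I would distinguish whether $(a_1,b_1]$ and $(a_2,b_2]$ overlap.

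If they do not, say $(a_2,b_2]$ lies to the left of $(a_1,b_1]$ (the reverse being symmetric), then $(A'_{b_2}-A'_{a_2})^2$ is $\mathcal F_{a_1}$-measurable; conditioning on $\mathcal F_{a_1}$, pulling this factor out, and applying the second-moment estimates of Lemma \ref{elem_ineq} twice (the bounds $\mathbb E[(C^{(m)}_{s+t}-C^{(m)}_s)^2\,|\,\mathcal F_s]\le Kt$ and $\mathbb E[(M^{(m)}(q)_{s+t}-M^{(m)}(q)_s)^2\,|\,\mathcal F_s]\le Kte_q$) gives exactly the claimed bound; note that the no-common-jump property of $\Omega_T^{(d)}$ is not needed in this case.

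For overlapping intervals set $(\alpha,\beta]=(a_1,b_1]\cap(a_2,b_2]$, so that $\beta-\alpha\le\min(b_1-a_1,b_2-a_2)$ and hence $(\beta-\alpha)^2\le(b_1-a_1)(b_2-a_2)$. I would split each of the two increments into the at most three sub-increments determined by the four endpoints and expand the product into at most nine cross-terms. In each of them either one factor vanishes, or the two sub-intervals are disjoint and therefore time-ordered — in which case the argument from the non-overlapping case applies and yields a contribution $\le K e_q e_{q'}(b_1-a_1)(b_2-a_2)$ (using that a sub-increment of $M(q)$ over a subinterval of $\mathcal I_{i,n}^{(l)}$ still has conditional second moment $\le K e_q|\mathcal I_{i,n}^{(l)}|$, and similarly for $C$) — or else both sub-increments run over $(\alpha,\beta]$. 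For that single remaining term I would apply Itô's formula to $UV$ with $U=(A_\cdot-A_\alpha)^2$ and $V=(A'_\cdot-A'_\alpha)^2$ on $[\alpha,\beta]$: the covariation $[U,V]$ vanishes — because $C$ is continuous in the first case, and because $M^{(l)}(q)$ and $M^{(3-l)}(q')$ have no common jumps on $\Omega_T^{(d)}$ in the second — so that $d(UV)=U_-\,dV+V_-\,dU$. Taking expectations the two martingale parts disappear and one is left with $\mathbb E\big[\int_\alpha^\beta U_{s-}\,d[A']_s\big]+\mathbb E\big[\int_\alpha^\beta V_{s-}\,d[A]_s\big]$; compensating the brackets, bounding the compensator density of $[M^{(m)}(q)]$ by $K e_q$ via $\|\delta\|\le\gamma$ (and that of $[C^{(m)}]$ by $K$), and using $\mathbb E[(C^{(m)}_s-C^{(m)}_\alpha)^2]\le K(s-\alpha)$ resp.\ $\mathbb E[(M^{(m)}(q)_s-M^{(m)}(q)_\alpha)^2]\le K e_q(s-\alpha)$ from Lemma \ref{elem_ineq}, each term is $\le K e_q e_{q'}(\beta-\alpha)^2$. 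Summing the nine contributions gives the assertion.

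The step I expect to be hard is this common-subinterval term, especially for the second inequality. A plain Cauchy--Schwarz on fourth moments is too lossy — it would only give a factor $(\beta-\alpha)^{3/2}$ in place of $(\beta-\alpha)^2$ — so the orthogonality of the two martingales genuinely has to be used. For the second inequality there is the additional delicacy that $\Omega_T^{(d)}\in\mathcal X$ is not adapted, so the vanishing of $[U,V]$ and the disappearance of the stochastic-integral terms cannot simply be invoked under an expectation restricted to $\Omega_T^{(d)}$. I would deal with this by first splitting off from each $M^{(m)}(q)$ the contribution of the atoms of $\mu$ that affect both components — which produces no jumps on $\Omega_T^{(d)}$, only a continuous compensator to be estimated separately — and then running the Itô argument for the remaining, globally orthogonal, martingales.
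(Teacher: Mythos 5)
Your decomposition is the same as the paper's: the disjoint-interval case is handled by time-ordering, iterated conditioning and Lemma \ref{elem_ineq}, and in the overlapping case the product is reduced to pairs of disjoint (hence time-ordered) sub-intervals plus the single term in which both increments run over the common piece $\mathcal{I}_{i,n}^{(l)}\cap\mathcal{I}_{j,n}^{(3-l)}=(T_{k-1}^n,T_k^n]$ --- this is exactly the content of the displayed bound in the paper's proof. Where you differ is in that last, synchronous term: the paper simply cites Lemma 8.2 of \cite{JacTod09}, which is this estimate for coinciding intervals, whereas you prove it from scratch via It\^o's formula applied to $UV$ together with $[U,V]=0$. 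Your remark that plain Cauchy--Schwarz only yields $(\beta-\alpha)^{3/2}$, so that the orthogonality of the two martingales is indispensable, is exactly right, and your It\^o computation for the $C$-versus-$M(q)$ case is complete. So the proposal buys self-containedness at the price of redoing the cited external lemma; the substance is the argument underlying that reference.

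The one place where the sketch is thinner than it needs to be is the second inequality. You correctly identify that $\Omega_T^{(d)}$ is $\mathcal{X}$-measurable but not adapted, so that neither $[U,V]=0$ nor the vanishing of the stochastic-integral expectations can be invoked under $\widetilde{\mathbb{E}}[\,\cdot\,\ind_{\Omega_T^{(d)}}]$, and your fix --- splitting $\delta^{(m)}$ into the part supported on $\{\delta^{(1)}\delta^{(2)}\neq 0\}$ and its complement, the latter producing globally orthogonal martingales --- is the right one. But the assertion that the split-off part reduces on $\Omega_T^{(d)}$ to ``only a continuous compensator'' requires justification: the compensator $\int_0^t\int\delta^{(m)}(s,z)\ind_{\{\delta^{(1)}\delta^{(2)}\neq0\}}\ind_{\{\gamma(z)\leq1/q\}}\,\nu(ds,dz)$ is a well-defined finite-variation process only if $\lambda(\{\delta^{(1)}\delta^{(2)}\neq0\})<\infty$; this does hold whenever $\mathbb{P}(\Omega_T^{(d)})>0$ (otherwise common jumps occur almost surely and the claim is vacuous), but it should be stated. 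Moreover, once that drift is isolated, one still has to check that its contribution carries the factor $e_q$: its density is bounded via Cauchy--Schwarz by $\big(\int\gamma^2\ind_{\{\gamma\leq1/q\}}\lambda(dz)\big)^{1/2}\lambda(\{\delta^{(1)}\delta^{(2)}\neq0\})^{1/2}\leq K e_q^{1/2}$, so its squared increment over $(\alpha,\beta]$ is $O(e_q(\beta-\alpha)^2)$, and only this extra step delivers the stated order $e_qe_{q'}$. These are fixable details rather than a wrong idea, but they are precisely the details that the citation of Lemma 8.2 in \cite{JacTod09} is meant to absorb.
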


\begin{proof}[Proof of Lemma \ref{clt_lemma1}]
If $\mathcal{I}_{i,n}^{(l)} \cap \mathcal{I}_{j,n}^{(3-l)}=\emptyset$ we use iterated expectations and Lemma \ref{elem_ineq}. If the intervals do overlap, there exists a $k$ with $\mathcal{I}_{i,n}^{(l)} \cap \mathcal{I}_{j,n}^{(3-l)}=\big(T_{k-1}^n,T_k^n \big]$. Using iterated expectations we get
\begin{multline*}
\mathbb{E} \big[ \big(\Delta_{i,n}^{(l)} C^{(l)} \big)^2\big(\Delta_{j,n}^{(3-l)} M^{(3-l)}(q) \big)^2\big| \mathcal{S} \big]
\\ \leq  K e_q \big(\big(\Delta_{k-1}^{n,1,-}+\Delta_{k}^{n}+\Delta_{k}^{n,1,+}\big)\big(\Delta_{k-1}^{n,2,-}+\Delta_{k}^{n}+\Delta_{k}^{n,2,+}\big)- \big(\Delta_k^n \big)^2\big)
\\+\mathbb{E} \big[ \big( C^{(l)}_{T_k^n}-C^{(l)}_{T_{k-1}^n} \big)^2\big(M^{(3-l)}(q)_{T_k^n}-M^{(3-l)}(q)_{T_{k-1}^n} \big)^2\big| \mathcal{S} \big]
\end{multline*}
and an analogous result for \eqref{abschatzung_klein_klein}. The claim now follows from Lemma 8.2 in \cite{JacTod09} which is basically Lemma \ref{clt_lemma1} for $\mathcal{I}_{i,n}^{(l)} = \mathcal{I}_{j,n}^{(3-l)}$. The generalization to $q\neq q'$ here does not complicate the proof.
\end{proof}

\begin{proposition} \label{clt_prop3}
If Condition \ref{cond_centr}(i)-(ii) is fulfilled, we have on $\Omega_T^{(d)}$
\begin{align*}
\lim_{q \rightarrow \infty} \limsup_{n \rightarrow \infty} 
\mathbb{P} \big( \big| n V(f,\pi_n)_T - R(n,q)_T \big| > \varepsilon\big)=0 \quad \forall \varepsilon>0
\end{align*}
with
\begin{multline*}
R(n,q)_T=n\sum_{i,j:t_{i,n}^{(1)} \wedge t_{j,n}^{(2)} \leq T} \big(\big(\Delta_{i,n}^{(1)} C^{(1)} \big)^2\big(\Delta_{j,n}^{(2)} C^{(2)} \big)^2
+\big(\Delta_{i,n}^{(1)} N^{(1)}(q) \big)^2\big(\Delta_{j,n}^{(2)} C^{(2)} \big)^2
\\+\big(\Delta_{i,n}^{(1)} C^{(1)} \big)^2\big(\Delta_{j,n}^{(2)} N^{(2)}(q) \big)^2\big)
\mathds{1}_{\{\mathcal{I}_{i,n}^{(1)} \cap\mathcal{I}_{i,n}^{(2)} \neq \emptyset\}}.
\end{multline*}
\end{proposition}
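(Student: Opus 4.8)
The plan is to substitute the decomposition $X^{(l)}=X_0^{(l)}+B^{(l)}(q)+C^{(l)}+M^{(l)}(q)+N^{(l)}(q)$ into each increment, expand $\big(\Delta_{i,n}^{(1)}X^{(1)}\big)^2\big(\Delta_{j,n}^{(2)}X^{(2)}\big)^2$, and observe that the three summands making up $R(n,q)_T$ are precisely $\big(\Delta_{i,n}^{(1)}C^{(1)}\big)^2\big(\Delta_{j,n}^{(2)}C^{(2)}\big)^2$, $\big(\Delta_{i,n}^{(1)}N^{(1)}(q)\big)^2\big(\Delta_{j,n}^{(2)}C^{(2)}\big)^2$ and $\big(\Delta_{i,n}^{(1)}C^{(1)}\big)^2\big(\Delta_{j,n}^{(2)}N^{(2)}(q)\big)^2$; hence $nV(f,\pi_n)_T-R(n,q)_T$ is $n$ times the sum over overlapping $(i,j)$ of all the remaining products. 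I would split these into four groups and show each is negligible in the iterated limit $\lim_{q\to\infty}\limsup_{n\to\infty}$: (1) terms carrying a factor $\Delta_{i,n}^{(l)}B^{(l)}(q)$; (2) terms with no $B$-factor, at least one factor $\Delta_{i,n}^{(l)}M^{(l)}(q)$, and no $N$-factor; (3) all remaining terms involving exactly one $N$-block on one side — a square $\big(\Delta_{i,n}^{(l)}N^{(l)}(q)\big)^2$ or a cross term $\Delta_{i,n}^{(l)}N^{(l)}(q)\Delta_{i,n}^{(l)}(\cdot)$ — and none on the other; and (4) $\big(\Delta_{i,n}^{(1)}N^{(1)}(q)\big)^2\big(\Delta_{j,n}^{(2)}N^{(2)}(q)\big)^2$. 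Three facts will be used throughout: by exogeneity one may condition on $\mathcal S$ and treat the observation intervals as fixed; $n\sum_{i,j:\mathcal I_{i,n}^{(1)}\cap\mathcal I_{j,n}^{(2)}\neq\emptyset}\big|\mathcal I_{i,n}^{(1)}\big|\big|\mathcal I_{j,n}^{(2)}\big|=O_{\mathbb P}(1)$, since this sum is dominated by a multiple of $H_n(T)$ and $H_n(T)\to H(T)$ by Condition \ref{cond_centr}(ii); and for fixed $s\in(0,T)$ the local mesh quantity $n\mathcal M_n^{(l)}(s)$ is $O_{\mathbb P}(1)$, which is part of the analysis of the $n\eta_n^{(l)}(\cdot)$ in the proof of Proposition \ref{clt_prop2}.

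Groups 1 and 2 are routine. For Group 1 the pathwise bound $\big|\Delta_{i,n}^{(l)}B^{(l)}(q)\big|\le K_q\big|\mathcal I_{i,n}^{(l)}\big|$ from Lemma \ref{elem_ineq}, together with Cauchy--Schwarz, \eqref{elem_ineq_C} and $\mathbb E\big[\big(\Delta_{i,n}^{(l)}X^{(l)}\big)^2\big|\mathcal S\big]\le K_q\big|\mathcal I_{i,n}^{(l)}\big|$, gives each such term an $\mathcal S$-conditional expectation $\le K_q|\pi_n|_T^{1/2}\big|\mathcal I_{i,n}^{(1)}\big|\big|\mathcal I_{j,n}^{(2)}\big|$, so its contribution is $K_q|\pi_n|_T^{1/2}\,O_{\mathbb P}(1)=o_{\mathbb P}(1)$ for fixed $q$. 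For Group 2, Lemma \ref{clt_lemma1} — where $\Omega_T^{(d)}$ enters, through \eqref{abschatzung_klein_klein} — and its simpler analogue for two $C$-increments, combined with Cauchy--Schwarz, bound the $\mathcal S$-conditional expectation of every such term by $Ke_q^{1/2}\big|\mathcal I_{i,n}^{(1)}\big|\big|\mathcal I_{j,n}^{(2)}\big|$, so its contribution is $Ke_q^{1/2}\,O_{\mathbb P}(1)$, which vanishes on letting $n\to\infty$ and then $q\to\infty$ since $e_q\to0$.

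The substance is in Groups 3 and 4. Let $\Omega(n,q)$ be the event on which all jump times of $N^{(1)}(q)$ and $N^{(2)}(q)$ in $[0,T]$ are pairwise separated by more than $4|\pi_n|_T$; then $\mathbb P(\Omega(n,q))\to1$. On $\Omega_T^{(d)}$ the components $N^{(1)}(q)$ and $N^{(2)}(q)$ never jump at the same time (a common jump of $X$ would be needed, and the small- and large-jump parts of $X$ cannot jump simultaneously), so on $\Omega(n,q)\cap\Omega_T^{(d)}$ no pair of overlapping intervals can carry a jump of $N^{(1)}(q)$ and of $N^{(2)}(q)$ at once, and the Group 4 term is identically zero there. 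For Group 3, on $\Omega(n,q)\cap\Omega_T^{(d)}$ a factor $\Delta_{i,n}^{(l)}N^{(l)}(q)$ is nonzero for only finitely many $i$ — one per jump of $N^{(l)}(q)$, of bounded size — and the intervals of the other process overlapping such an $i$ sit in a window of length $O(|\pi_n|_T)$ about the jump and hence carry no jump of $N^{(3-l)}(q)$. One is thus reduced, on the $O(1)$ relevant intervals, either to a cross term $\Delta_{i,n}^{(l)}C^{(l)}\Delta_{i,n}^{(l)}N^{(l)}(q)$ or $\Delta_{i,n}^{(l)}M^{(l)}(q)\Delta_{i,n}^{(l)}N^{(l)}(q)$ — each $O_{\mathbb P}(|\pi_n|_T^{1/2})$ by \eqref{elem_ineq_C} and Lemma \ref{elem_ineq} — times the overlap-sum of squared $C$- or $M$-increments of the other process, or to $\big(\Delta_{i,n}^{(l)}N^{(l)}(q)\big)^2$ times an overlap-sum of $B$- or $M$-squares of the other process. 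Taking $\mathcal S$-conditional expectations, the cross-term contributions are killed by $\mathbb E[n|\pi_n|_T^{3/2}]=o(1)$ from Condition \ref{cond_centr}(i), the $B$-square contributions by $n\mathbb E[|\pi_n|_T^2]=o(1)$, and the $M$-square contributions are $O_{\mathbb P}(e_q\,n\mathcal M_n^{(3-l)}(s))=O_{\mathbb P}(e_q)$; throughout one uses exogeneity to integrate out the number of jumps of $N(q)$, which for fixed $q$ is independent of the sampling and has finite mean. So each Group 3 term is $o_{\mathbb P}(1)$ for fixed $q$, those retaining a factor $e_q$ being finally killed as $q\to\infty$. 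Assembling the four estimates proves the proposition.

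The main obstacle is Group 3. Unlike the $B$- and $M$-terms, the cross terms $\Delta C^{(l)}\Delta N^{(l)}(q)$ are not small per unit interval length — they are $O_{\mathbb P}(|\pi_n|_T^{1/2})$, not $O_{\mathbb P}(|\pi_n|_T)$ — so the needed smallness must be drawn from the finiteness of the $N(q)$-jump set. Making this precise forces one to pass to the well-separated event $\Omega(n,q)$, to invoke exogeneity so that the ($q$-fixed, sampling-independent, finite-mean) jump count factors out of the expectation, to use exactly the $3/2$-exponent of Condition \ref{cond_centr}(i), which is precisely what controls $n\cdot|\pi_n|_T^{1/2}\cdot|\pi_n|_T$, and to know that the local overlap-sums $n\mathcal M_n^{(l)}(s)$ are stochastically bounded. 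The remaining work — checking that every one of the many expanded terms lands in one of the four groups and obeys the stated bound, and in particular that the dichotomy ``an interval either meets a jump of $N^{(l)}(q)$ or has a vanishing $N^{(l)}(q)$-increment'' is applied correctly in Groups 3 and 4 — is routine bookkeeping.
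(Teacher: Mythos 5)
Your overall strategy coincides with the paper's: expand the increments via $X=X_0+B(q)+C+M(q)+N(q)$, kill the $B$- and $M$-carrying terms through $|\pi_n|_T\to 0$, $e_q\to 0$ and the bound $n\sum_{i,j}|\mathcal I_{i,n}^{(1)}||\mathcal I_{j,n}^{(2)}|\mathds{1}_{\{\text{overlap}\}}\leq H_n(T)=O_{\mathbb P}(1)$, annihilate every term carrying $N^{(1)}(q)$- and $N^{(2)}(q)$-factors simultaneously by the absence of common jumps on $\Omega_T^{(d)}$ together with a jump-separation event, and control the genuinely delicate cross terms $(\Delta_{i,n}^{(l)}C^{(l)})^2\,\Delta_{j,n}^{(3-l)}C^{(3-l)}\Delta_{j,n}^{(3-l)}N^{(3-l)}(q)$ by the bounded jump count of $N(q)$ and $\mathbb E[n(|\pi_n|_T)^{3/2}]=o(1)$ from Condition \ref{cond_centr}(i). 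This last step is exactly the paper's treatment of \eqref{clt_prop3_remain}, and you correctly identify it as the place where the $3/2$-exponent is consumed. The paper organizes the expansion through the weighted inequality \eqref{algebr_ineq} with a final $\rho\to 0$ step rather than a raw term-by-term expansion, but that is cosmetic.

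There is, however, one genuine gap: your treatment of the contributions $(\Delta_{i,n}^{(l)}N^{(l)}(q))^2\sum_j(\Delta_{j,n}^{(3-l)}M^{(3-l)}(q))^2\mathds{1}_{\{\text{overlap}\}}$ rests on the claim that $n\mathcal M_n^{(3-l)}(s)=O_{\mathbb P}(1)$ for fixed $s$, which you attribute to the analysis of $n\eta_n^{(l)}(\cdot)$ in Proposition \ref{clt_prop2}. That tightness is a consequence of Condition \ref{cond_centr}(iii), which is \emph{not} among the hypotheses of this proposition; Conditions (i)--(ii) only give $\mathcal M_n^{(l)}(s)\leq 3|\pi_n|_T=o_{\mathbb P}(n^{-2/3})$, so $n\mathcal M_n^{(l)}(s)$ may diverge (e.g.\ like $n^{1/3-\epsilon}$) while $G_n,H_n$ still converge, and then your localized bound $O_{\mathbb P}(e_q\,n\mathcal M_n^{(3-l)}(s))$ does not close the $\limsup_n$ for fixed $q$. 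The paper avoids this by never localizing these terms: it writes $N(q)=B(q')-B(q)+M(q')-M(q)$ for a fixed $q'$ with $\gamma\leq 1/q'$ impossible, bounds $\mathbb E[(\Delta_{i,n}^{(l)}N^{(l)}(q))^2(\Delta_{j,n}^{(3-l)}M^{(3-l)}(q))^2\,|\,\mathcal S]\leq K(e_q+K_q|\mathcal I_{i,n}^{(l)}|)|\mathcal I_{i,n}^{(l)}||\mathcal I_{j,n}^{(3-l)}|$ via Lemma \ref{clt_lemma1} (where $\Omega_T^{(d)}$ enters through \eqref{abschatzung_klein_klein}), and sums globally against $H_n(T)$. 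Your argument should be repaired the same way. Two smaller bookkeeping points: your four groups as literally enumerated miss the mixed terms carrying an $N$-factor on \emph{both} sides other than the pure product of squares (e.g.\ $\Delta C^{(1)}\Delta N^{(1)}(q)\cdot\Delta C^{(2)}\Delta N^{(2)}(q)$), though your Group-4 separation argument disposes of all of them at once; and the blanket Group-1 bound $K_q|\pi_n|_T^{1/2}|\mathcal I_{i,n}^{(1)}||\mathcal I_{j,n}^{(2)}|$ fails when the opposite factor contains an $N$-increment (since $\mathbb E[(\Delta_{j,n}^{(3-l)}N^{(3-l)}(q))^2\,|\,\mathcal S]$ is of order $|\mathcal I_{j,n}^{(3-l)}|$, not $|\mathcal I_{j,n}^{(3-l)}|^2$), so those terms must be routed through the localization argument instead.
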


\begin{proof}[Proof of Proposition \ref{clt_prop3}]
Since $\gamma$ is bounded by Condition \ref{cond_consistency} we can write
\begin{align*}
X=X_0+B(q')+C+M(q')
\end{align*}
on $[0,T]$ for some positive number $q'$ (not necessarily an integer) which yields
\begin{align}\label{N_q_qstricht}
N(q)=B(q')-B(q)+M(q')-M(q).
\end{align}
We apply inequality \eqref{algebr_ineq} with $a_l=0,b_l=\Delta_{i,n}^{(l)}B^{(l)}(q),c_l=\Delta_{i,n}^{(l)}M^{(l)}(q)$ and \mbox{$d_l=\Delta_{i,n}^{(l)}C^{(l)}+\Delta_{i,n}^{(l)}N^{(l)}(q)$}. Then, by Propositions \ref{clt_prop1} and \ref{clt_prop2} we have
\begin{small}
\begin{align} \label{clt_prop3_d_l}
&\rho n\sum_{i,j:t_{i,n}^{(1)} \wedge t_{j,n}^{(2)} \leq T} \big(\Delta_{i,n}^{(1)} C^{(1)} + \Delta_{i,n}^{(1)}N^{(1)}(q) \big)^2\big(\Delta_{j,n}^{(2)} C^{(2)}+\Delta_{j,n}^{(2)} N^{(2)}(q) )\big)^2
\mathds{1}_{\{\mathcal{I}_{i,n}^{(1)} \cap\mathcal{I}_{i,n}^{(2)} \neq \emptyset\}}
\\&~~ \leq \rho  n\sum_{i,j:t_{i,n}^{(1)} \wedge t_{j,n}^{(2)} \leq T} 4\big((\Delta_{i,n}^{(1)} C^{(1)})^2 +(\Delta_{i,n}^{(1)} N^{(1)}(q))^2 \big)\big((\Delta_{j,n}^{(2)} C^{(2)})^2+(\Delta_{j,n}^{(2)} N^{(2)}(q) )^2\big) \nonumber \\
&~~~~~~~~~~~~~~~~~~~~~~~~~~~~~~~~~~~~~~~~~~~~~~~~~~~~~~~~~~~~~~~~~~~~~~~~~~~ \times
\mathds{1}_{\{\mathcal{I}_{i,n}^{(1)} \cap\mathcal{I}_{i,n}^{(2)} \neq \emptyset\}}, \nonumber
\end{align}
\end{small}and the latter term is bounded in probability. Hence, it converges to zero for $\rho \rightarrow 0$.

We also get for $l=1,2$ using \eqref{N_q_qstricht}, Lemma \ref{elem_ineq} and Lemma \ref{clt_lemma1},
\begin{align*}
&\mathbb{E} \big[ c_\rho n\sum_{i,j:t_{i,n}^{(3-l)} \wedge t_{j,n}^{(l)} \leq T} \big(\big(\Delta_{i,n}^{(3-l)} B^{(3-l)}(q)\big)^2 +\big(\Delta_{i,n}^{(3-l)} M^{(3-l)}(q)\big)^2 \big)
\\ &~~\times \big(\big(\Delta_{j,n}^{(l)} B^{(l)}(q)\big)^2+\big(\Delta_{j,n}^{(l)} M^{(l)}(q)\big)^2+
\big(\Delta_{j,n}^{(l)} C^{(l)} +\Delta_{j,n}^{(l)} N^{(l)}(q) \big)^2\big) \\ 
&~~~~~~~~~~~~~~~~~~~~~~~~~~~~~~~~~~~~~~~~~~~~~~~~~~~~~~~~~~~~~~~~~~~~~~~~~\times
\mathds{1}_{\{\mathcal{I}_{i,n}^{(3-l)} \cap\mathcal{I}_{i,n}^{(l)} \neq \emptyset\}} \big| \mathcal{S} \big]
\\&~~ \leq  c_\rho n\sum_{i,j:t_{i,n}^{(3-l)} \wedge t_{j,n}^{(l)} \leq T} \big(K_q \big|\mathcal{I}_{i,n}^{(3-l)}\big| +K e_q \big)\big|\mathcal{I}_{i,n}^{(3-l)}\big|
\\ &~~~~\times \big(K_q\big|\mathcal{I}_{j,n}^{(l)}\big|+K e_q +2K  + 8(K_q+K_{q'}) \big|\mathcal{I}_{j,n}^{(l)}\big|+8K (e_q + e_{q'})
\big)\big|\mathcal{I}_{j,n}^{(l)}\big| \\
&~~~~~~~~~~~~~~~~~~~~~~~~~~~~~~~~~~~~~~~~~~~~~~~~~~~~~~~~~~~~~~~~~~~~~~~~~~~ \times
\mathds{1}_{\{\mathcal{I}_{i,n}^{(3-l)} \cap\mathcal{I}_{i,n}^{(l)} \neq \emptyset\}}
\\&~~\leq c_\rho \big(K_q |\pi_n|_T + K e_q   \big)
\big(H_n(T)+O_{\mathbb{P}}\big(n (|\pi_n|_T)^2 \big) \big),
\end{align*}
where the latter bound converges to zero for $n \rightarrow \infty$ and then $q \rightarrow \infty$. Therefore, inequality \eqref{algebr_ineq} shows that only the terms as in \eqref{clt_prop3_d_l} remain in the limit. On $\Omega_T^{(d)}$, the terms that occur in \eqref{clt_prop3_d_l} but not in $R(n,q)_T$ are of the form
\begin{multline}\label{clt_prop3_remain}
n\sum_{i,j:t_{i,n}^{(l)} \wedge t_{j,n}^{(3-l)} \leq T} \big(\big(\Delta_{i,n}^{(l)} C^{(l)}\big)^2 +2 \big(\Delta_{i,n}^{(l)} C^{(l)}\big)\big(\Delta_{i,n}^{(l)}N^{(l)}(q)\big)+\big(\Delta_{i,n}^{(l)}N^{(l)}(q) \big)^2 \big)
\\ \times
\big(\big(\Delta_{j,n}^{(3-l)} C^{(3-l)}\big)\big(\Delta_{j,n}^{(3-l)} N^{(3-l)}(q) \big)\big)
\mathds{1}_{\{\mathcal{I}_{i,n}^{(l)} \cap\mathcal{I}_{j,n}^{(3-l)} \neq \emptyset\}}, \quad l=1,2.
\end{multline}
From similar arguments as before, we obtain that the sum over terms containing the product $\big(\Delta_{i,n}^{(l)}N^{(l)}(q)\big)\big(\Delta_{j,n}^{(3-l)} N^{(3-l)}(q) \big)$ converges to zero because we are on $\Omega_T^{(d)}$. 

In order to discuss why the remaining terms vanish, let $\Omega(3-l,q,\Gamma)$ be the set on which $N^{(3-l)}(q)$ has no more than $\Gamma$ jumps in $[0,T]$. On $\Omega(3-l,q,\Gamma)$ we get, using the Cauchy-Schwarz inequality and \eqref{elem_ineq_C},
\begin{align*}
&\mathbb{E} \big[n\sum_{i,j:t_{i,n}^{(l)} \wedge t_{j,n}^{(3-l)} \leq T} \big(\Delta_{i,n}^{(l)} C^{(l)}\big)^2 
\big(\Delta_{j,n}^{(3-l)} C^{(3-l)}\big)\big(\Delta_{j,n}^{(3-l)} N^{(3-l)}(q) )\big)
\mathds{1}_{\{\mathcal{I}_{i,n}^{(l)} \cap\mathcal{I}_{i,n}^{(3-l)} \neq \emptyset\}} \big| \mathcal{S} \big]
\\&~~~~~\leq 3 K \Gamma n \big(|\pi_n|_T\big)^{3/2}
\end{align*}
which tends to zero by Condition \ref{cond_centr}. Because of $\mathbb{P}(\Omega(3-l,q,\Gamma))\rightarrow 1$ for $\Gamma \rightarrow \infty$ we find that all the terms in  \eqref{clt_prop3_remain} vanish as $n \rightarrow \infty$.
\end{proof}

\begin{proof}[Proof of Theorem \ref{clt}]
This is a direct consequence of Propositions \ref{clt_prop1}, \ref{clt_prop2} and \ref{clt_prop3} as well as \eqref{cons_Vg}.
\end{proof}

\subsection{Proof for the testing procedure}
\bibliographystyle{chicago}

\begin{proof}[Proof of Theorem \ref{test_theo}]
We will only show
\begin{align}\label{test_theo_level2}
\widetilde{\mathbb{P}}\big(nV(f,\pi_n)_T > A_{n,T}(\beta, \varpi)+Q_{n,T}(1-\alpha) \big| F^{(d)} \big) \rightarrow \alpha,
\end{align}
which is well-defined on the entire $\Omega_{\mathcal X}$. To this end, we will prove in the sequel that Condition \ref{cond_centr} ensures
\begin{align}\label{estimator_cont}
A_{n,T}(\beta,\varpi) \overset{\mathbb{P}}{\longrightarrow} \widetilde{C}_T,
\end{align}
while under Condition \ref{condition_test} both $Q_{n,T}(\alpha)=\widehat{Q}_{n,T}(\alpha)$ or $Q_{n,T}(\alpha)=\widetilde{Q}_{n,T}(\alpha)$ satisfy
\begin{align}\label{estimator_jump}
Q_{n,T} (\alpha) \overset{\widetilde{\mathbb{P}}}{\longrightarrow} Q(\alpha)
\end{align}
for each $\alpha \in [0,1]$, where $Q(\alpha)$ denotes the \mbox{$\mathcal{X}$-conditional} $\alpha$ quantile of $\widetilde{D}_T$, i.e.\ the $\mathcal{X}$-measurable random variable defined via 
\[
\widetilde{\mathbb{P}}\big(\widetilde{D}_T \leq Q(\alpha) \big| \mathcal{X} \big)=\alpha.
\]
Note that the $\cal X$-conditional distribution of $\widetilde{D}_T$ is continuous by Condition \ref{cond_centr} (iii). Then, by Theorem \ref{clt} and \eqref{estimator_cont} we get
\begin{align*}
n V(f,\pi_n)_T -A_{n,T}(\beta,\varpi) \overset{\mathcal{L}-s}{\longrightarrow} \widetilde{D}_T
\end{align*}
which yields 
\begin{align*}
&\mathbb{P} \big(\big\{n V(f,\pi_n)_T >A_{n,T}(\beta,\varpi)+Q_{n,T}(1-\alpha) \big\} \cap F^{(d)}  \big) 
\\&~~= \mathbb{P} \big(\big\{n V(f,\pi_n)_T -A_{n,T}(\beta,\varpi)>Q_{n,T}(1-\alpha) \big\}  \cap F^{(d)}  \big) \\&~~\rightarrow \widetilde{\mathbb{P}}\big(\big\{\widetilde{D}_T>Q(1-\alpha) \big\} \cap F^{(d)}  \big)=\alpha \mathbb{P}\big(F^{(d)}\big)
\end{align*}
by \eqref{estimator_jump} and the definition of $Q(\alpha)$. This implies \eqref{test_theo_level2} and hence \eqref{test_theo_level}.

The consistency claim \eqref{test_theo_power} follows from the fact that $\widetilde{\Phi}_{n,T}^{(d)}$ converges to a strictly positive limit on $\Omega_T^{(j)}$ while $\mathpzc{c}_n=O_{\widetilde{\mathbb{P}}}(n^{-1})$.
\end{proof}

\begin{proof}[Proof of \eqref{estimator_cont}]
Following the proof of Proposition \ref{clt_prop3}, it is enough to show that
\begin{small}
\begin{multline*}
n \sum_{i,j:t_{i,n}^{(1)} \wedge t_{j,n}^{(2)} \leq T} \bigg(\big(\Delta_{i,n}^{(1)} C^{(1)}\big)^2 \big(\Delta_{j,n}^{(2)} C^{(2)}\big)^2
+\big(\Delta_{i,n}^{(1)} N^{(1)}(q)\big)^2 \big(\Delta_{j,n}^{(2)} C^{(2)}\big)^2
\\+\big(\Delta_{i,n}^{(1)} C^{(1)}\big)^2 \big(\Delta_{j,n}^{(2)} N^{(2)}(q)\big)^2
 \bigg) 
\times
\mathds{1}_{\{|\Delta_{i,n}^{(1)} X^{(1)} |\leq \beta |\mathcal{I}_{i,n}^{(1)}|^{\varpi} \wedge |\Delta_{j,n}^{(2)} X^{(2)} |\leq \beta |\mathcal{I}_{j,n}^{(2)}|^{\varpi}\}}
 \mathds{1}_{\{\mathcal{I}_{i,n}^{(1)} \cap \mathcal{I}_{j,n}^{(2)} \neq \emptyset\}}
\end{multline*}
\end{small}converges to $\widetilde{C}_T$. 

We first deal with the cross terms of big jumps and Brownian increments. Let $\Omega(q,\Gamma,\delta,n)$ denote the set on which there are at most $\Gamma$ jumps of $N(q)$ which are of size $|\Delta N(q)|>\delta$ and which are further apart than $2|\pi_n|_T$. If $N(q)^{(l)}$ jumps in $\mathcal{I}_{i,n}^{(l)}$, we have on the set $\Omega(q,\Gamma,\delta,n)$
\begin{align*}
&\mathbb{E}\big[\big(\Delta_{i,n}^{(l)}X^{(l)}\big)^2\big| \mathcal{S} \big] 
\\ &~~ \geq \delta^2 - \mathbb{E}\big[\big|\big(\Delta_{i,n}^{(l)}X^{(l)}\big)^2-\big(\Delta_{i,n}^{(l)}N^{(l)}(q)\big)^2\big|\big| \mathcal{S} \big] 
\\ &~~\geq 
\delta^2-3 \mathbb{E}\big[\big(\Delta_{i,n}^{(l)}B^{(l)}(q)\big)^2+\big(\Delta_{i,n}^{(l)}C^{(l)}\big)^2+\big(\Delta_{i,n}^{(l)}M^{(l)}(q)\big)^2| \mathcal{S} \big]
\\&~~~~~~~~~~~~~~~~~~~~~~~~-2  \mathbb{E}\big[\big|\Delta_{i,n}^{(l)}B^{(l)}(q)+\Delta_{i,n}^{(l)}C^{(l)}+\Delta_{i,n}^{(l)}M^{(l)}(q)\big| \big|\Delta_{i,n}^{(l)}N^{(l)}(q) \big| \big| \mathcal{S} \big]
\\ &~~ \geq \delta^2 - 3 K_q |\pi_n|_T-2\Delta K_q \sqrt{|\pi_n|_T}
\end{align*}
where we used Cauchy-Schwarz inequality, Lemma \ref{elem_ineq} and the fact that $\gamma$ is bounded by a constant $\Delta$. This lower bound converges to $\delta^2>0$ as $n \rightarrow \infty$. Hence $\mathbb{P}(|\Delta_{i_n,n}^{(l)} X^{(l)} |\leq \beta |\mathcal{I}_{i_n,n}^{(l)}|^\varpi)$ converges to zero where we use $i_n$ to indicate in which interval the jump is observed. Because of $\mathbb{P}(\Omega(q,\Gamma,\delta,n))\rightarrow 1$ for as first $n \to \infty$, then $\Gamma \rightarrow \infty$ and finally $\delta \rightarrow 0$, the sum over the cross terms therefore vanishes.  

Using Proposition \ref{clt_prop1} it remains to show 
\begin{multline}\label{conv_Lt}
\widetilde{L}_T=n \sum_{i,j:t_{i,n}^{(1)} \wedge t_{j,n}^{(2)} \leq T} \big(\Delta_{i,n}^{(1)} C^{(1)}\big)^2
\big(\Delta_{j,n}^{(2)} C^{(2)}\big)^2
\\ \times
\mathds{1}_{\{|\Delta_{i,n}^{(1)} X^{(1)} > \beta |\mathcal{I}_{i,n}^{(1)}|^{\varpi} \vee |\Delta_{j,n}^{(2)} X^{(2)} |> \beta |\mathcal{I}_{j,n}^{(2)}|^{\varpi}\}}
 \mathds{1}_{\{\mathcal{I}_{i,n}^{(1)} \cap \mathcal{I}_{j,n}^{(2)} \neq \emptyset\}} \overset{\mathbb{P}}{\longrightarrow} 0.
\end{multline}
The conditional Markov inequality plus an application of Lemma \ref{elem_ineq} give
\begin{align}\label{markov}
\mathbb{P} \big(\big|\Delta_{i,n}^{(l)}X^{(l)}\big|> \beta \big|\mathcal{I}_{i,n}^{(l)}\big|^\varpi \big| \mathcal{S}\big) \le K \big|\mathcal{I}_{i,n}^{(l)}\big|^{1-2 \varpi}.
\end{align}
Using
\begin{multline*}
\widetilde{L}_T \leq n \sum_{i_1,i_2:t_{i_1,n}^{(1)} \wedge t_{i_2,n}^{(2)} \leq T} \big(\Delta_{i_1,n}^{(1)} C^{(1)}\big)^2
\big(\Delta_{i_2,n}^{(2)} C^{(2)}\big)^2
\\ \times
\sum_{l=1,2}
\mathds{1}_{\{|\Delta_{i_l,n}^{(l)} X^{(l)} > \beta |\mathcal{I}_{i_l,n}^{(l)}|^{\varpi} \}}
\mathds{1}_{\{\mathcal{I}_{i_1,n}^{(1)} \cap \mathcal{I}_{i_2,n}^{(2)} \neq \emptyset\}}
\end{multline*}
and the generalized H\"older inequality, as well as Lemma \ref{elem_ineq_C} and \eqref{markov}, we get
\begin{align} \label{Hoelder}
\mathbb{E}\big[\widetilde{L}_T \big| \mathcal{S} \big]
&\leq K n \sum_{i_1,i_2:t_{i_1,n}^{(1)} \wedge t_{i_2,n}^{(2)} \leq T} \big|\mathcal{I}_{i_1,n}^{(1)} \big|\big|\mathcal{I}_{i_2,n}^{(2)} \big| \sum_{l=1,2}
\big|\mathcal{I}_{i_l,n}^{(l)} \big|^{(1-2 \varpi)/p'}\mathds{1}_{\{\mathcal{I}_{i,n}^{(1)} \cap \mathcal{I}_{j,n}^{(2)} \neq \emptyset\}} \nonumber
\\& \leq K (|\pi_n|_T)^{(1-2 \varpi)/p'} H_n(T)
\end{align}
for any $p'>1$, which tends to zero by Condition \ref{cond_consistency} and Condition \ref{cond_centr}(ii). This yields \eqref{conv_Lt}.
\end{proof}

For the proof of \eqref{estimator_jump} we need a few preliminary results which yield that the convergence of the empirical $\mathcal{X}$-conditional distribution of the $\widehat{D}_{T,n,m}$ and $\widetilde{D}_{T,n,m}$ to the $\mathcal{X}$-conditional distribution of $\widetilde{D}_T$ follows from the convergence of the common empirical distribution of the $\hat{\eta}^{(l_j)}_{n,m}(s_j)$ to the common distribution of the $\eta^{(l_j)}(s_j)$ provided in Condition \ref{condition_test}. These results are proved in Lemma \ref{emp_lemma} and Proposition \ref{emp_prop}.

\begin{lemma}\label{emp_lemma} 
Suppose that $A_{n,j} \overset{\widetilde{\mathbb{P}}}{\longrightarrow} A_j$ for $\mathcal{X}$-measurable $A_j > 0$, and let $S_j \in [0,T]$, $j=1,\ldots,J,$ be almost surely distinct $\mathcal{X}$-measurable random variables as well. Then, under Condition \ref{condition_test},
\begin{align*}
\widetilde{\mathbb{P}}\Bigg(\Bigg|\frac{1}{M_n} \sum_{m=1}^{M_n} \mathds{1}_{\{\sum_{j=1}^{J} A_{n,j} \hat{\eta}_{n,m}^{(l_j)}(S_j)\leq Z\}} - \widetilde{\mathbb{P}}\bigg(\sum_{j=1}^{J} A_{j} \eta^{(l_j)}(S_j) \leq Z \bigg| \mathcal{X} \bigg) \Bigg|> \varepsilon \Bigg) \rightarrow 0
\end{align*}
for any $\mathcal{X}$-measurable random variable $Z$ and any $\varepsilon>0$. 
\end{lemma}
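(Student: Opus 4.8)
The plan is to peel off the randomness in three layers: first a conditional law of large numbers over the $M_n$ bootstrap replicates, then a reduction of the $\mathcal{X}\otimes\mathcal{S}$-measurable weights $A_{n,j}$ to their limits $A_j$, and finally a conditioning argument that freezes the $\mathcal{X}$-measurable quantities $A_j$, $S_j$, $Z$ so that \eqref{vert_kon1} --- which is stated only for deterministic evaluation points, weights and thresholds --- can be applied. Throughout I would use that, by exogeneity, the observation $\sigma$-algebra $\mathcal{S}$ and the bootstrap randomisation (the $U^{(l)}_{n,i,m}$ and $V^{(l)}_{n,m}(\cdot)$) are independent of $\mathcal{X}$, and I abbreviate $F_{n,m}=\sum_{j=1}^{J}A_{n,j}\hat\eta^{(l_j)}_{n,m}(S_j)$.

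\textbf{Step 1 (averaging over $m$).} Conditionally on $(\mathcal{X},\mathcal{S})$ the weights $A_{n,j}$ and the $S_j,Z$ are constants, while the indicators $\mathds{1}_{\{F_{n,m}\le Z\}}$, $m=1,\dots,M_n$, are i.i.d.\ Bernoulli variables since the bootstrap draws are $\mathcal{S}$-conditionally independent across $m$. Hence the $(\mathcal{X},\mathcal{S})$-conditional variance of $M_n^{-1}\sum_m\mathds{1}_{\{F_{n,m}\le Z\}}$ is at most $1/(4M_n)\to0$, and a conditional Chebyshev inequality yields $M_n^{-1}\sum_m\mathds{1}_{\{F_{n,m}\le Z\}}-\widetilde{\mathbb{P}}(F_{n,1}\le Z\mid\mathcal{X},\mathcal{S})\overset{\widetilde{\mathbb{P}}}{\to}0$. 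It then remains to prove $\widetilde{\mathbb{P}}(F_{n,1}\le Z\mid\mathcal{X},\mathcal{S})\overset{\widetilde{\mathbb{P}}}{\to}\widetilde{\mathbb{P}}(\sum_jA_j\eta^{(l_j)}(S_j)\le Z\mid\mathcal{X})$, the conditioning simplifying to $\mathcal{X}$ because the $\eta^{(l)}(S_j)$ are independent of $\mathcal{S}$.

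\textbf{Steps 2 and 3 (replacing $A_{n,j}$ by $A_j$, then freezing and taking the limit).} On $\{\max_j|A_{n,j}-A_j|\le\varepsilon'\}$, which has probability tending to one, $\{F_{n,1}\le Z\}$ sits between $\{\sum_jA_j\hat\eta^{(l_j)}_{n,1}(S_j)\le Z-\delta\}$ and $\{\sum_jA_j\hat\eta^{(l_j)}_{n,1}(S_j)\le Z+\delta\}\cup\{\sum_j\hat\eta^{(l_j)}_{n,1}(S_j)>\delta/\varepsilon'\}$; taking $(\mathcal{X},\mathcal{S})$-conditional probabilities, the exceptional term is bounded by $(\varepsilon'/\delta)\sum_j\widetilde{\mathbb{E}}[\hat\eta^{(l_j)}_{n,1}(S_j)\mid\mathcal{X},\mathcal{S}]$, which I would show is $O_{\widetilde{\mathbb{P}}}(\varepsilon'/\delta)$ by proving that the $\mathcal{S}$-conditional first moments of $\hat\eta^{(l)}_{n,1}(s)$ are stochastically bounded uniformly in $s$ --- this is where the uniformly bounded first moments of the $\Gamma^{(l)}(s,\cdot)$ from Condition~\ref{cond_centr}(iii) enter. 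Letting first $n\to\infty$, then $\varepsilon'\to0$ and finally $\delta\to0$, I may assume $A_{n,j}\equiv A_j$, so that every ingredient outside the bootstrap draws becomes $\mathcal{X}$-measurable. Conditioning on $\mathcal{X}$ and using $\mathcal{S}\perp\mathcal{X}$, dominated convergence over $\mathcal{X}$ (the relevant integrand is bounded by $1$, and $\{S_i\neq S_j,\ A_j>0\ \text{for all }i\neq j\}$ has probability one) then reduces the claim to the deterministic statement that $\widetilde{\mathbb{P}}(\sum_ja_j\hat\eta^{(l_j)}_{n,1}(s_j)\le z\mid\mathcal{S})\overset{\widetilde{\mathbb{P}}}{\to}\widetilde{\mathbb{P}}(\sum_ja_j\eta^{(l_j)}(s_j)\le z)$ for all distinct $s_j\in(0,T)$, all $a_j>0$ and all $z\in\mathbb{R}$. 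For this I invoke \eqref{vert_kon1}: the $\mathcal{S}$-conditional joint distribution function of $(\hat\eta^{(l_j)}_{n,1}(s_j))_j$ converges in $\widetilde{\mathbb{P}}$-probability, at every point, to that of $(\eta^{(l_j)}(s_j))_j$; since the $\eta^{(l)}(s_j)$ are mutually independent with densities, their joint law is absolutely continuous, the hyperplane $\{\sum_ja_jy_j=z\}$ is a null set, and --- because the $a_j$ are strictly positive --- the region $\{\sum_ja_jy_j\le z\}\cap[0,\infty)^J$ is bounded and can be squeezed, up to limiting mass $\varepsilon$, between two finite unions of boxes $(0,x]$. Sandwiching $\widetilde{\mathbb{P}}(\sum_ja_j\hat\eta^{(l_j)}_{n,1}(s_j)\le z\mid\mathcal{S})$ between the corresponding inclusion--exclusion combinations of orthant probabilities and applying \eqref{vert_kon1} to each of these finitely many orthants gives the claim.

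\textbf{Where the difficulty lies.} Steps~1 and~3 are essentially bookkeeping once \eqref{vert_kon1} is available; the genuinely delicate point is moving from the deterministic evaluation points, weights and thresholds allowed by \eqref{vert_kon1} to the $\mathcal{X}$-measurable ones in the statement, and above all the uniform tightness of $\hat\eta^{(l)}_{n,1}(s)$ needed in Step~2. The crude bound $\hat\eta^{(l)}_{n,1}(s)\le3n|\pi_n|_T\max_i(U^{(l)}_{n,i,1})^2$ is far too weak, since $n|\pi_n|_T$ need not stay bounded, so one must extract from Condition~\ref{cond_centr}(iii) that the \emph{typical} local mesh around $s$ is of order $n^{-1}$ (e.g.\ by truncating the identity and exploiting the uniformly bounded first moments of the $\Gamma^{(l)}$), which is where most of the real work goes. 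The remaining care points are the order in which the limits $n\to\infty$, $\varepsilon'\to0$, $\delta\to0$ are taken in Step~2 and the weak-convergence bookkeeping (half-space versus orthants, null boundaries) in Step~3.
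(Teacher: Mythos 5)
Your overall architecture coincides with the paper's: a conditional Chebyshev/Markov argument over the $M_n$ i.i.d.\ bootstrap replicates, a reduction of $A_{n,j}$ to $A_j$, and a conditioning-plus-discretization step that turns the half-space event into finitely many orthant events to which \eqref{vert_kon1} applies (the paper discretizes one coordinate over $z\in\Z$ and truncates; your boxes-covering-a-bounded-region argument is an equivalent bookkeeping). The problem is your Step~2. To control the exceptional event $\{\sum_j\hat\eta^{(l_j)}_{n,1}(S_j)>\delta/\varepsilon'\}$ you invoke a Markov bound and therefore need the $\mathcal S$-conditional first moments of $\hat\eta^{(l)}_{n,1}(s)$ to be stochastically bounded. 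That bound is not available from the stated assumptions: the exact $\mathcal S$-conditional mean of $\hat\eta^{(l)}_{n,1}(s)$ is a weighted average of $n\mathcal M^{(l)}_n(\cdot)$ over the $2K_n+1$ neighbouring intervals, which is only controlled by $3n|\pi_n|_T$ in general (this is precisely the crude bound the paper itself uses, in a different place, in the proof of Proposition \ref{emp_prop}), and $n|\pi_n|_T$ need not be tight. Condition \ref{cond_centr}(iii) cannot rescue this either, since it constrains $n\eta^{(l)}_n$ (built from true Brownian increments) in an integrated sense, not the bootstrap quantities $\hat\eta^{(l)}_{n,1}$; extracting a first-moment bound for $\hat\eta^{(l)}_{n,1}(s)$ from it is not a routine step, and you have correctly sensed that this is where your argument strains.

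The point is that no such moment bound is needed, and the paper avoids the issue in a way you should note. Because the $\hat\eta^{(l)}_{n,m}$ are nonnegative, on the event $\{|A_{n,j}-A_j|<\delta,\ A_{n,j}>A_j-\delta>0\}$ one has the \emph{multiplicative} sandwich
\begin{align*}
\Big\{\sum_j (A_j+\delta)\,\hat\eta^{(l_j)}_{n,m}(S_j)\le Z\Big\}
\subseteq
\Big\{\sum_j A_{n,j}\,\hat\eta^{(l_j)}_{n,m}(S_j)\le Z\Big\}
\subseteq
\Big\{\sum_j (A_j-\delta)\,\hat\eta^{(l_j)}_{n,m}(S_j)\le Z\Big\},
\end{align*}
so the perturbation is absorbed into the (deterministic, after conditioning on $\mathcal X$) coefficients rather than into the threshold, and no tail event involving $\hat\eta$ ever appears; the final limit $\delta\to0$ only uses that $\sum_j A_j\eta^{(l_j)}(S_j)$ has a continuous law, which follows from the densities in Condition \ref{cond_centr}(iii). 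Alternatively, if you insist on your additive decomposition, replace the Markov bound by tightness: \eqref{vert_kon1} already gives convergence in probability of the $\mathcal S$-conditional distribution function of $\hat\eta^{(l_j)}_{n,1}(S_j)$ at every point, hence $\widetilde{\mathbb P}(\hat\eta^{(l_j)}_{n,1}(S_j)>K\mid\mathcal S)$ can be made small with high probability for large $K$ and $n$, which suffices since only the finitely many fixed points $S_1,\dots,S_J$ are involved (no uniformity in $s$ is required). With either repair your proof goes through; as written, Step~2 rests on an unproved and probably false uniform moment bound.
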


\begin{proof}

First, note that
\begin{align}
& \widetilde{\mathbb{P}}\big(\big|\frac{1}{M_n}  \sum_{m=1}^{M_n} \big(\mathds{1}_{\{\hat{\eta}^{(l_j)}_{n,m}(s_j)\leq x_j,~j=1,\ldots,J\}} 
- \widetilde{\mathbb{P}}(\eta^{(l_j)}(s_j) \leq x_j,~j=1,\ldots,J)\big) \big|> \varepsilon \big) \label{emp_distr1}\\
&~~~~\le
\widetilde{\mathbb{P}}\big(\big|\frac{1}{M_n} \sum_{m=1}^{M_n} \mathds{1}_{\{\hat{\eta}^{(l_j)}_{n,m}(s_j)\leq x_j,~j=1,\ldots,J\}} 
-
 \widetilde{\mathbb{P}}\big(\hat{\eta}^{(l_j)}_{n,1}(s_j)\leq x_j,~j=1,\ldots,J \big| \mathcal S \big) \big|> \frac{\varepsilon}{2} \big) \nonumber
\\&~~~~ +
\widetilde{\mathbb{P}}\big( \widetilde{\mathbb{P}}\big(\hat{\eta}^{(l_j)}_{n,1}(s_j)\leq x_j,~j=1,\ldots,J \big| \mathcal S \big)
-
 \widetilde{\mathbb{P}}\big({\eta}^{(l_j)}(s_j)\leq x_j,~j=1,\ldots,J\big) \big|> \frac{\varepsilon}{2} \big). \nonumber
\end{align}
Conditionally on $\mathcal S$, the $(\hat{\eta}^{(l_j)}_{n,m}(s_j))_j$ are independent and identically distributed as $m$ varies. Therefore, $M_n \to \infty$, the conditional Markov inequality, and dominated convergence ensure that the first term vanishes asymptotically, whereas the second one converges to zero using (\ref{vert_kon1}). We can therefore assume that the term \eqref{emp_distr1} converges to zero.

For the sake of brevity we restrict ourselves in the sequel to the case $J=2$ only, and we denote by $\Omega(n,\delta)$ the set where $|A_{n,j}-A_j|<\delta$ and $A_{n,j}-\delta>0$ for $j=1,2$. On this set we have $R_n \leq R_n^+(\delta)$ with
\begin{align*}
R_n&=\frac{1}{M_n}  \sum_{m=1}^{M_n} \mathds{1}_{\{ A_{n,1} \hat{\eta}_{n,m}^{(l_1)}(S_1)+A_{n,2} \hat{\eta}_{n,m}^{(l_2)}(S_2)\leq Z\}}, 
\\R_n^+(\delta)&= \frac{1}{M_n} \sum_{m=1}^{M_n} \mathds{1}_{\{ (A_{1}-\delta) \hat{\eta}_{n,m}^{(l_1)}(S_1)+(A_{2}-\delta) \hat{\eta}_{n,m}^{(l_2)}(S_2)\leq Z\}}. 
\end{align*}
Using a discretization we get that $R_n^+(\delta)$ is bounded by
\begin{align*}
R_n^+(\delta,r)=\sum_{z \in \mathbb{Z}} \frac{1}{M_n}  \sum_{m=1}^{M_n} \mathds{1}_{\{ (A_{1}-\delta) \hat{\eta}_{n,m}^{(l_1)}(S_1)\leq Z- z 2^{-r}\}}\mathds{1}_{\{ z2^{-r}<(A_{2}-\delta) \hat{\eta}_{n,m}^{(l_2)}(S_2)\leq (z+1)2^{-r}\}} 
\end{align*}
for any $r \in \mathbb{N}$. We will work conditionally on $\cal X$ first, which yields that we can treat $A_j,S_j,Z$ as constants in the following.

To this end, let $\widetilde{\mathbb{P}}_{\omega}(\, \cdot \,)$ denote a regular version of the conditional probability $\widetilde{\mathbb{P}}(\, \cdot \,|\mathcal{X})(\omega)$, and we work pointwise in $\omega \in \Omega(n,\delta)$. Using that \eqref{emp_distr1} vanishes asymptotically, for any $\xi > 0$ we can find $K \in \N$ and $N \in \N$ such that 
\[
\widetilde{\mathbb{P}}_{\omega} \Big(  \frac{1}{M_n}  \sum_{m=1}^{M_n} \mathds{1}_{\{ K2^{-r}<(A_{2}-\delta) \hat{\eta}_{n,m}^{(l_2)}(S_2)\}} > \xi \Big) < \xi
\]
for any $n \ge N$ as well as
\[
\widetilde{\mathbb{P}}_{\omega} \Big( K2^{-r}<(A_{2}-\delta) {\eta}^{(l_2)}(S_2)\Big) < \xi.
\]
Now, note that 
\begin{align*}
&\sum_{|z| \leq K} \frac{1}{M_n}  \sum_{m=1}^{M_n} \mathds{1}_{\{ (A_{1}-\delta) \hat{\eta}_{n,m}^{(l_1)}(S_1)\leq Z- z 2^{-r}\}}\mathds{1}_{\{ z2^{-r}<(A_{2}-\delta) \hat{\eta}_{n,m}^{(l_2)}(S_2)\leq (z+1)2^{-r}\}}
\\& \leq R_n^+(\delta,r) \leq \frac{1}{M_n}  \sum_{m=1}^{M_n} \Big( \sum_{|z| \leq K}  \mathds{1}_{\{ (A_{1}-\delta) \hat{\eta}_{n,m}^{(l_1)}(S_1)\leq Z- z 2^{-r}\}}\mathds{1}_{\{ z2^{-r}<(A_{2}-\delta) \hat{\eta}_{n,m}^{(l_2)}(S_2)\leq (z+1)2^{-r}\}}
\\&~~~~~~~~~~~~~~~~~~~~~~~~~~~~~~~~~~~~~~~~~~~~~~~~ + \mathds{1}_{\{ K2^{-r}<|(A_{2}-\delta) \hat{\eta}_{n,m}^{(l_2)}(S_2)|\}} \Big).
\end{align*}
Setting 
\begin{multline*}
R^+(\delta,r)=\sum_{z \in \mathbb{Z}} \widetilde{\mathbb{P}}_\omega \big(  \eta^{(l_1)}(S_1)\leq (Z- z 2^{-r})/(A_1-\delta)\big)
\\ \times \widetilde{\mathbb{P}}_\omega\big(
z2^{-r}/(A_2-\delta)< \eta^{(l_2)}(S_2)\leq (z+1)2^{-r}(A_2-\delta)\big)
\end{multline*}
the previous discussion and another application of the fact that \eqref{emp_distr1} converges to zero show  
\begin{align}\label{emp_dist11}
\widetilde{\mathbb{P}}_\omega(|R_n^+(\delta,r)-R(\delta,r)|>\varepsilon)\rightarrow 0
\end{align}
for all $\varepsilon>0$ and any fixed $r$. Also, as the $\eta^{(l)}$ have densities by Condition \ref{cond_centr}(iii), $R^+(\delta,r)$ converges as $r \rightarrow \infty$ to 
\begin{align*}
R^+(\delta)&=\int \widetilde{\mathbb{P}}_\omega\left((A_1-\delta)\eta^{(l_1)}(S_1)+z \leq Z  \right) d\widetilde{\mathbb{P}}_\omega\left( (A_2-\delta)\eta^{(l_2)}(S_2) \leq z \right)
\\ &=\widetilde{\mathbb{P}}_\omega\big( (A_1-\delta) \eta^{(l_1)}(S_1)+(A_2-\delta)\eta^{(l_2)}(S_2)\leq Z  \big),
\end{align*}
using a Riemann sum argument. Together with \eqref{emp_dist11} we obtain
\begin{align}\label{emp_dist12}
&\limsup_{r \rightarrow \infty} \lim_{n \rightarrow \infty}
\widetilde{\mathbb{P}}_\omega (|R_n^+(\delta,r)-R^+(\delta)|>\varepsilon) \nonumber
\\&~~~~ \leq \limsup_{r \rightarrow \infty} \lim_{n \rightarrow \infty}
\widetilde{\mathbb{P}}_\omega (|R_n^+(\delta,r)-R^+(\delta,r)|>\varepsilon/2) \nonumber
\\&~~~~~~~+ \limsup_{r \rightarrow \infty} 
\widetilde{\mathbb{P}}_\omega (|R^+(\delta,r)-R^+(\delta)|>\varepsilon/2),
\end{align}
and the right hand side vanishes. Analogously, if
\begin{align*}
R^-(\delta)=\widetilde{\mathbb{P}}_\omega\big( (A_1+\delta) \eta^{(l_1)}(S_1)+(A_2+\delta)\eta^{(l_2)}(S_2)\leq Z  \big)
\end{align*}
we obtain similar lower bounds $R_n^-(\delta)$ and $R_n^-(\delta,r)$ of $R_n$ for which
\begin{align}\label{emp_dist13}
\limsup_{r \rightarrow \infty} \lim_{n \rightarrow \infty}
\widetilde{\mathbb{P}}_\omega (|R_n^-(\delta,r)-R^-(\delta)|>\varepsilon)=0.
\end{align}

Finally, let $R=R(0)$. Then, using $R^-_n(\delta,r) \mathds{1}_{\Omega(n,\delta)}\leq R_n \leq R_n^+(\delta,r)\mathds{1}_{\Omega(n,\delta)}$, we have
\begin{align*}
&\lim_{n \rightarrow \infty} \widetilde{\mathbb{P}} (|R_n-R|>\varepsilon) 
\\&~~~~ \leq \limsup_{\delta \rightarrow 0}\limsup_{r \rightarrow 0}\lim_{n \rightarrow \infty} \widetilde{\mathbb{P}} (\{\max\{|R_n^+(\delta,r)-R|,|R_n^-(\delta,r)-R|\}>\varepsilon\} \cap \Omega(n,\delta))
\\&~~~~~~~~+\limsup_{\delta \rightarrow 0}\limsup_{r \rightarrow 0}\lim_{n \rightarrow \infty} (1-\mathbb{P}(\Omega(n,q)))
\\&~~~~\leq \limsup_{\delta \rightarrow 0}\limsup_{r \rightarrow 0}\lim_{n \rightarrow \infty} \Big(\widetilde{\mathbb{P}} (\{|R_n^+(\delta,r)-R^+(\delta)|>\varepsilon/2\} \cap \Omega(n,\delta)) \\ &~~~~~~~~~~~~~~~~~~~~~~~~~~~~~~~~~~~~~~+\widetilde{\mathbb{P}} (\{|R^+(\delta)-R|>\varepsilon/2\} \cap \Omega(n,\delta))  \Big)
\\&~~~~~~~~+\limsup_{\delta \rightarrow 0}\limsup_{r \rightarrow 0}\lim_{n \rightarrow \infty} \Big(\widetilde{\mathbb{P}} (\{|R_n^-(\delta,r)-R^-(\delta)|>\varepsilon/2\}  \cap \Omega(n,\delta))   \\ &~~~~~~~~~~~~~~~~~~~~~~~~~~~~~~~~~~~~~~+\widetilde{\mathbb{P}} (\{|R^-(\delta)-R|>\varepsilon/2\}  \cap \Omega(n,\delta))  \Big)
\\&~~~~~~~~+\limsup_{\delta \rightarrow 0}\limsup_{r \rightarrow 0}\lim_{n \rightarrow \infty} (1-\mathbb{P}(\Omega(n,q))).
\end{align*}
This final bound vanishes because dominated convergence allows to deduce \eqref{emp_dist12} and \eqref{emp_dist13} if we replace $\widetilde{\mathbb{P}}_\omega$ by $\widetilde{\mathbb{P}}$, because $R^-(\delta)$ and $R^+(\delta)$ converge $\widetilde{\mathbb{P}}$-almost surely to $R$ for $\delta \rightarrow 0$ as the $\eta^{(l)}$ admit $\mathcal{X}$-conditional densities, and because of $\widetilde{\mathbb{P}}(\Omega(n,\delta))\rightarrow 1$ as $n \rightarrow \infty$ for all $\delta>0$.
\end{proof}

\begin{proposition}\label{emp_prop}
Suppose that Condition \ref{condition_test} is satisfied. Then, 
\begin{align}\label{emp_distr_hatD}
\widetilde{\mathbb{P}}\big(\big\{\big|\frac{1}{M_n} \sum_{m=1}^{M_n} \mathds{1}_{\{ \widehat{D}_{T,n,m}\leq Z\}} - \widetilde{\mathbb{P}}\big(\widetilde{D}_T \leq Z \big| \mathcal{X} \big) \big|> \varepsilon \big\} \cap\Omega_T^{(d)} \big) \rightarrow 0
\end{align}
for any $\mathcal{X}$-measurable random variable $Z$ and all $\varepsilon>0$. The analogous result holds if we replace $\widehat{D}_{T,n,m}$ by $\widetilde{D}_{T,n,m}$.
\end{proposition}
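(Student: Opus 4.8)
The plan is to reduce $\widehat{D}_{T,n,m}$ to a finite sum over the big jumps of $X$, apply Lemma~\ref{emp_lemma} to that sum, and then let the truncation level tend to infinity. Throughout I would work on $\Omega_T^{(d)}$ and fix an $\mathcal{X}$-measurable $Z$. Using $X=X_0+B(q)+C+M(q)+N(q)$, let $(U_{q,p}^{(l)})_p$ enumerate the jump times of $N^{(l)}(q)$ in $[0,T]$; on $\Omega_T^{(d)}$ these are finitely many, a.s.\ pairwise distinct for $l=1,2$, and $\mathcal{X}$-measurable. The first step is to show that, up to a remainder vanishing uniformly in $m$ in $\widetilde{\mathbb{P}}$-probability as first $n\to\infty$ and then $q\to\infty$,
\begin{multline*}
\widehat{D}_{T,n,m}=\widehat{D}^{(q)}_{T,n,m}:=\sum_{p:U_{q,p}^{(1)}\le T}\big(\widehat{\Delta}_nX^{(1)}(U_{q,p}^{(1)})\big)^2\big(\hat{\sigma}_n^{(2)}(U_{q,p}^{(1)})\big)^2\hat{\eta}_{n,m}^{(2)}(U_{q,p}^{(1)})\\
+\sum_{p:U_{q,p}^{(2)}\le T}\big(\widehat{\Delta}_nX^{(2)}(U_{q,p}^{(2)})\big)^2\big(\hat{\sigma}_n^{(1)}(U_{q,p}^{(2)})\big)^2\hat{\eta}_{n,m}^{(1)}(U_{q,p}^{(2)})
\end{multline*}
(the $\hat\sigma$ and $\hat\eta$ factors being evaluated at the right endpoint of the observation interval carrying $U_{q,p}^{(l)}$, which is asymptotically immaterial). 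Here I would rely on the analysis already carried out in the proof of \eqref{estimator_cont}: for $n$ large the truncation indicator selects exactly the observation intervals containing a jump of $N(q)$ of size exceeding a fixed threshold $\delta$, together with at most intervals carrying only continuous or small-jump increments whose total squared contribution is negligible; multiplying by $\hat{\sigma}_n^{(l)}=O_{\widetilde{\mathbb{P}}}(1)$ and by $\hat{\eta}_{n,m}^{(l)}$, whose $\mathcal{S}$-conditional mean is at most $3n|\pi_n|_T$, and invoking the moment bounds of Lemmas~\ref{elem_ineq} and \ref{clt_lemma1} $\mathcal{S}$-conditionally, these spurious contributions die out after averaging over $m$, taking limits in the order $n\to\infty$, $\delta\to0$, $q\to\infty$.

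On $\Omega_T^{(d)}$ the component $X^{(3-l)}$ is continuous at every $U_{q,p}^{(l)}$, so $\widehat{\Delta}_nX^{(l)}(U_{q,p}^{(l)})\to\Delta X^{(l)}_{U_{q,p}^{(l)}}$ and $(\hat{\sigma}_n^{(3-l)}(U_{q,p}^{(l)}))^2\to(\sigma^{(3-l)}_{U_{q,p}^{(l)}})^2$ by consistency of the jump and spot-volatility estimators (using $b_n\to0$ and $|\pi_n|_T/b_n\to0$ from Condition~\ref{condition_test}). Thus $\widehat{D}^{(q)}_{T,n,m}=\sum_{j=1}^J A_{n,j}\hat{\eta}_{n,m}^{(l_j)}(S_j)$, where $S_1,\dots,S_J$ enumerates the distinct $\mathcal{X}$-measurable big jump times, $l_j\in\{1,2\}$ indexes the non-jumping component at $S_j$, and $A_{n,j}\overset{\widetilde{\mathbb{P}}}{\to}A_j:=(\Delta X^{(3-l_j)}_{S_j})^2(\sigma^{(l_j)}_{S_j})^2$; terms with $\sigma^{(l_j)}_{S_j}=0$ vanish on both sides and may be discarded, so we may take $A_j>0$. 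Lemma~\ref{emp_lemma} then gives $\frac{1}{M_n}\sum_{m=1}^{M_n}\mathds{1}_{\{\widehat{D}^{(q)}_{T,n,m}\le Z\}}\overset{\widetilde{\mathbb{P}}}{\to}\widetilde{\mathbb{P}}(\widetilde{D}^{(q)}_T\le Z\mid\mathcal{X})$ with $\widetilde{D}^{(q)}_T:=\sum_j A_j\eta^{(l_j)}(S_j)$, and likewise with $Z$ replaced by $Z\pm\varepsilon$.

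To combine these two ingredients I would sandwich
\begin{align*}
\mathds{1}_{\{\widehat{D}^{(q)}_{T,n,m}\le Z-\varepsilon\}}-\mathds{1}_{\{|\widehat{D}_{T,n,m}-\widehat{D}^{(q)}_{T,n,m}|>\varepsilon\}}\le\mathds{1}_{\{\widehat{D}_{T,n,m}\le Z\}}\le\mathds{1}_{\{\widehat{D}^{(q)}_{T,n,m}\le Z+\varepsilon\}}+\mathds{1}_{\{|\widehat{D}_{T,n,m}-\widehat{D}^{(q)}_{T,n,m}|>\varepsilon\}},
\end{align*}
average over $m$, and use the conditional Markov inequality together with the uniform smallness of the remainder to kill the $\frac{1}{M_n}\sum_m\mathds{1}_{\{|\cdot|>\varepsilon\}}$ terms. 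What remains is $\widetilde{\mathbb{P}}(\widetilde{D}^{(q)}_T\le Z\pm\varepsilon\mid\mathcal{X})$, which tends to $\widetilde{\mathbb{P}}(\widetilde{D}^{(q)}_T\le Z\mid\mathcal{X})$ as $\varepsilon\to0$ since for fixed $q$ the variable $\widetilde{D}^{(q)}_T$ has an $\mathcal{X}$-conditional Lebesgue density, being a finite convolution of the scaled densities of the independent $\eta^{(l_j)}(S_j)$ guaranteed by Condition~\ref{cond_centr}(iii). Finally, letting $q\to\infty$, exactly as in Step~3 of the proof of Proposition~\ref{clt_prop2} one has $\widetilde{D}^{(q)}_T\to\widetilde{D}_T$ in $\widetilde{\mathbb{P}}$ (the discarded part is a sum over small jumps whose $\mathcal{X}$-conditional mean is bounded by the uniformly bounded first moments of the $\eta^{(l)}$ times $\sum_{S_p\le T}(\Delta M^{(3-l)}(q)_{S_p})^2(\sigma^{(l)}_{S_p})^2\to0$), and since the $\mathcal{X}$-conditional law of $\widetilde{D}_T$ is continuous by Condition~\ref{cond_centr}(iii), $\widetilde{\mathbb{P}}(\widetilde{D}^{(q)}_T\le Z\mid\mathcal{X})\to\widetilde{\mathbb{P}}(\widetilde{D}_T\le Z\mid\mathcal{X})$. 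A diagonal argument over $n,q,\varepsilon,\delta$ then yields \eqref{emp_distr_hatD}; the statement for $\widetilde{D}_{T,n,m}$ follows verbatim, since $\tilde{\sigma}_n^{(l)}(s)$ is also consistent for $(\sigma^{(l)}_s)^2$ whenever $X^{(l)}$ is continuous at $s$, which is exactly the situation at every $S_j$ above on $\Omega_T^{(d)}$.

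I expect the main obstacle to be the uniform-in-$m$ reduction of $\widehat{D}_{T,n,m}$ to the finite sum $\widehat{D}^{(q)}_{T,n,m}$: one has to verify that the random truncation isolates precisely the big jumps while the residual contributions — coming both from intervals that exceed the truncation level without carrying a big jump and from the fact that $\hat{\sigma}_n$ and $\hat{\eta}_{n,m}$ are only $O_{\widetilde{\mathbb{P}}}(1)$ rather than asymptotically small — disappear after averaging over $m$, which requires the $\mathcal{S}$-conditional moment estimates of Lemmas~\ref{elem_ineq} and \ref{clt_lemma1} and a careful ordering of the iterated limits. Everything downstream of this reduction is routine bookkeeping once Lemma~\ref{emp_lemma} is available.
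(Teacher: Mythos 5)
Your proposal is correct and follows essentially the same route as the paper: reduce $\widehat{D}_{T,n,m}$ to a finite sum over the big jumps (the paper indexes by the $J$ largest jumps and works on the set where these contain all jumps of $N(q)$, you index directly by $q$, which is immaterial), apply Lemma \ref{emp_lemma} to that sum, show the remainder vanishes after averaging over $m$ via the $\mathcal S$-conditional bound $\mathbb{E}[\hat\eta_{n,m}^{(3-l)}\mid\mathcal F]\le 3n|\pi_n|_T$ together with the moment estimates, and close with an indicator-sandwich argument plus continuity of the $\mathcal X$-conditional law as the truncation level tends to infinity. Your observation that terms with vanishing $A_j$ must be discarded before invoking Lemma \ref{emp_lemma} (which assumes $A_j>0$) is a legitimate small point the paper glosses over.
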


\begin{proof} Without loss of generality we will prove the result for $\widehat{D}_{T,n,m}$ only.
\\ \textit{Step 1.} Denote by $S_j$, $j=1,\ldots,J$, the jump times of the $J$ largest jumps of $X$ in $[0,T]$ with respect to a fixed norm on $\mathbb R$. Recall that on $\Omega_T^{(d)}$ only one component $X^{(l_j)}$ of $X$ jumps at $S_j$. Therefore, setting
\begin{align*}
A_{n,j}=\big(\widehat{\Delta}_n X^{(l_j)}(S_j)\big)^2 \big(\hat{\sigma}_n^{(3-l_j)}(S_j)\big)^2
\quad \text{and} \quad A_j=\big(\Delta X^{(l_j)}_{S_j}\big)^2 \big(\sigma^{(3-l_j)}_{S_j}\big)^2,
\end{align*}
Lemma \ref{emp_lemma} proves
\begin{align}\label{emp_prop1}
\widetilde{\mathbb{P}}\big(\big|\frac{1}{M_n} \sum_{m=1}^{M_n} \mathds{1}_{\{R(J,n,m)\leq Z\}} -
\widetilde{\mathbb{P}}\big(R(J) \leq Z \big| \mathcal{X} \big) \big|> \varepsilon \big) \rightarrow 0
\end{align}
where we used the notation
\begin{align*}
R(J,n,m)&=\sum_{j=1}^J \big(\widehat{\Delta}_n X^{(l_j)}(S_j)\big)^2 \big(\hat{\sigma}_n^{(3-l_j)}(S_j)\big)^2 \hat{\eta}_{n,m}^{(3-l_j)}(S_j),
\\ R(J)&=\sum_{j=1}^J \big(\Delta X^{(l_j)}_{S_j}\big)^2 \big(\sigma^{(3-l_j)}_{S_j}\big)^2 \eta^{(3-l_j)}(S_j).
\end{align*}
\\ \textit{Step 2.} We prove
\begin{align}\label{emp_prop2}
\lim_{J \rightarrow \infty} \limsup_{n \rightarrow \infty} \frac{1}{M_n} \sum_{m=1}^{M_n} \widetilde{\mathbb{P}} \big( \big|R(J,n,m)-\widehat{D}_{T,n,m} \big|>\varepsilon \big) \overset{\widetilde{\mathbb{P}}}{\longrightarrow} 0.
\end{align}
for all $\varepsilon>0$. Denote by $\Omega(q,J,n)$ the set on which the jumps of $N(q)$ are among the $J$ largest jumps and two different jumps of $N(q)$ are further apart than $|\pi_n|_T$. Obviously, $\mathbb{P}(\Omega(q,J,n))\rightarrow 1$ for $J,n \rightarrow \infty$ for any $q>0$. On the set $\Omega(q,J,n)$ we have 
\begin{align}\label{emp_prop2a}
&\big|R(J,n,m)-\widehat{D}_{T,n,m}  \big| \nonumber
\\&~~ = \sum_{l=1,2} \sum_{t_{i,n}^{(l)}\leq T, t_{i,n}^{(l)}\neq S_j} \big(\Delta_{i,n}^{(l)} B^{(l)}(q)+\Delta_{i,n}^{(l)} C^{(l)}+\Delta_{i,n}^{(l)} M^{(l)}(q)\big)^2\mathds{1}_{\{|\Delta_{i,n}^{(l)}X^{(l)}|>\beta |\mathcal{I}_{i,n}^{(l)}|^\varpi\}} \nonumber
\\&~~~~~~~~~~~~~~~~~~~~~~~~~~~~~~~~~~~~~~~~ \times  \big(\hat{\sigma}_n^{(3-l)}(t_{i,n}^{(l)}) \big)^2 \hat{\eta}_{n,m}^{(3-l)}(t_{i,n}^{(l)}) \nonumber
\\&~~\leq \sum_{l=1,2} \sum_{t_{i,n}^{(l)}\leq T} \big(\Delta_{i,n}^{(l)} B^{(l)}(q)+\Delta_{i,n}^{(l)} C^{(l)}+\Delta_{i,n}^{(l)} M^{(l)}(q)\big)^2 \mathds{1}_{\{|\Delta_{i,n}^{(l)}X^{(l)}|>\beta |\mathcal{I}_{i,n}^{(l)}|^\varpi\}} \nonumber
\\& ~~~~~~~~~~~~~~\times \frac{1}{2b_n} \sum_{j:|t_{j,n}^{(3-l)}-t_{i,n}^{(l)}|\leq b_n} \big( \Delta_{j,n}^{(3-l)}X^{(3-l)}\big)^2 \hat{\eta}_{n,m}^{(3-l)}(t_{i,n}^{(l)}) . 
\end{align}
We first consider the increments over the overlapping observation intervals in the right hand side of \eqref{emp_prop2a}. The $\mathcal{F}$-conditional mean of their sum is bounded by
\begin{multline}\label{emp_prop2b}
\frac{3|\pi_n|_T}{2b_n} n \sum_{l=1,2} \sum_{t_{i,n}^{(l)},t_{j,n}^{(3-l)}\leq T} \big(\Delta_{i,n}^{(l)} B^{(l)}(q)+\Delta_{i,n}^{(l)} C^{(l)}+\Delta_{i,n}^{(l)} M^{(l)}(q)\big)^2 \mathds{1}_{\{|\Delta_{i,n}^{(l)}X^{(l)}|>\beta |\mathcal{I}_{i,n}^{(l)}|^\varpi\}}
\\ \times \big( \Delta_{j,n}^{(3-l)}X^{(3-l)}\big)^2 
\mathds{1}_{\{\mathcal{I}_{i,n}^{(l)} \cap \mathcal{I}_{j,n}^{(3-l)} \neq \emptyset\}}.
\end{multline}
since 
\begin{align*}
\mathbb{E}\big[\hat{\eta}_{n,m}^{(3-l)}(t_{i,n}^{(l)}) \big| \mathcal{F} \big]&=n\sum_{k=-K_n}^{K_n} \big| \mathcal{I}^{(l)}_{i+k} \big| \big( \sum_{j=-K_n}^{K_n} \big| \mathcal{I}^{(l)}_{i+j} \big|\big)^{-1}
\mathcal{M}(t^{(3-l)}_{i+k,n})
\\&\leq \sup_{k=-K_n,\ldots,K_n} n\mathcal{M}_n^{(3-l)}(t_{i+k,n}^{(l)})\leq 3 n|\pi_n|_T.
\end{align*}
Because of Theorem \ref{clt} the sum in \eqref{emp_prop2b} is of order $1/n$, while $|\pi_n|_T/b_n \overset{\mathbb{P}}{\longrightarrow} 0$ for $n \rightarrow \infty$ by Condition \ref{condition_test}. Hence, \eqref{emp_prop2b} vanishes.

Next we deal with the increments over non-overlapping observation intervals in the right hand side of \eqref{emp_prop2a}. An upper bound is obtained by taking iterated $\mathcal{S}$-conditional expectations using Lemma \ref{elem_ineq} and the H\"older inequality as in \eqref{Hoelder}, and it is given by
\begin{align*}
&\sum_{l=1,2} \sum_{t_{i,n}^{(l)}\leq T} \big(K_q|\mathcal{I}_{i,n}^{(l)}|^2+K|\mathcal{I}_{i,n}^{(l)}|^{(p'+1-2 \varpi)/p'}+K e_q |\mathcal{I}_{i,n}^{(l)}| \big)    \frac{2K \big(b_n+|\pi_n|_T\big)}{2b_n}
\\&~~~~~~~~~~~~~~~~~~~~~~~~~~~~~~~~~~~~~~~~~~~~ \times  n\sum_{k=-K_n}^{K_n} \big| \mathcal{I}^{(l)}_{i+k} \big| \big( \sum_{j=-K_n}^{K_n} \big| \mathcal{I}^{(l)}_{i+j} \big|\big)^{-1}
\mathcal{M}(t^{(3-l)}_{i+k,n})
\\&~~\leq K \big(K_q|\pi_n|_T+(|\pi_n|_T)^{(1-2 \varpi)/p'}+e_q \big) O_{\mathbb{P}}(1) 
\\&~~~~~~~~~~~~~~~~~~~~~~~~ \times
 n \sum_{l=1,2} \sum_{t_{i,n}^{(l)}\leq T} |\mathcal{I}_{i,n}^{(l)}|
\sum_{k=-K_n}^{K_n} \big| \mathcal{I}^{(l)}_{i+k} \big| \big( \sum_{j=-K_n}^{K_n} \big| \mathcal{I}^{(l)}_{i+j} \big|\big)^{-1}
\mathcal{M}(t^{(3-l)}_{i+k,n}).
\end{align*}
Now \eqref{emp_prop2} follows from Condition \ref{cond_centr}(ii) because of
\begin{align*}
 &n \sum_{l=1,2} \sum_{t_{i,n}^{(l)}\leq T} |\mathcal{I}_{i,n}^{(l)}|
\sum_{k=-K_n}^{K_n} \big| \mathcal{I}^{(l)}_{i+k} \big| \big( \sum_{j=-K_n}^{K_n} \big| \mathcal{I}^{(l)}_{i+j} \big|\big)^{-1}
\mathcal{M}(t^{(3-l)}_{i+k,n})
\\&~~=n \sum_{l=1,2} \sum_{t_{i,n}^{(l)},t_{j,n}^{(3-l)}\leq T} |\mathcal{I}_{i,n}^{(l)}|  |\mathcal{I}_{j,n}^{(3-l)}| \mathds{1}_{\{\mathcal{I}_{i,n}^{(l)} \cap \mathcal{I}_{j,n}^{(3-l)} \neq 0\}} \sum_{k=-K_n}^{K_n} \big| \mathcal{I}_{i+k,n}^{(l)} \big| \big(\sum_{m=-K_n}^{K_n} \big| \mathcal{I}_{i+k+m,n}^{(l)} \big|\big)^{-1}
\\&~~\leq n \sum_{l=1,2} \sum_{t_{i,n}^{(l)},t_{j,n}^{(3-l)}\leq T} |\mathcal{I}_{i,n}^{(l)}|  |\mathcal{I}_{j,n}^{(3-l)}| \mathds{1}_{\{\mathcal{I}_{i,n}^{(l)} \cap \mathcal{I}_{j,n}^{(3-l)} \neq 0\}} 
\\&~~~~~~~~\times \Big(
\sum_{k=-K_n}^{0} \big| \mathcal{I}_{i+k,n}^{(l)} \big| \big(\sum_{m=-K_n}^{0} \big| \mathcal{I}_{i+m,n}^{(l)} \big|\big)^{-1}
+\sum_{k=0}^{K_n} \big| \mathcal{I}_{i+k,n}^{(l)} \big| \big(\sum_{m=0}^{K_n} \big| \mathcal{I}_{i+m,n}^{(l)} \big|\big)^{-1} \Big)
\\&~~\leq 2 H_n(T)
\end{align*}
and $q \to \infty$ afterwards.
\\ \textit{Step 3.} Using dominated convergence, $R(J) \overset{\widetilde{\mathbb{P}}}{\longrightarrow} \widetilde{D}_T$. Also, as the $\mathcal{X}$-conditional distribution of $\widetilde{D}_T$ is continuous on $\Omega_T^{(d)}$, for any choice of $\varepsilon, \eta > 0$ there exists $\delta > 0$ such that
\begin{align*}
\widetilde{\mathbb{P}}\big( \big|\widetilde{\mathbb{P}}\big(\widetilde{D}_T \leq Z \big| \mathcal{X} \big)
-\widetilde{\mathbb{P}}\big(\widetilde{D}_T \pm \delta \leq Z \big| \mathcal{X} \big) \big|>\eta \big) < \varepsilon.
\end{align*}
Then it is easy to deduce that 
\begin{align}\label{emp_prop1a}
\widetilde{\mathbb{P}}\big(R(J) \leq Z \big| \mathcal{X} \big)
\overset{\widetilde{\mathbb{P}}}{\longrightarrow}
\widetilde{\mathbb{P}}\big(\widetilde{D}_T \leq Z \big| \mathcal{X} \big)
\end{align}
holds for $J\rightarrow \infty$. 
\\ \textit{Step 4.} For any $\varepsilon>0$ we have
\begin{align*}
&\widetilde{\mathbb{E}}\big[\big|\frac{1}{M_n} \sum_{m=1}^{M_n} \mathds{1}_{\{R(J,n,m)\leq Z\}}-
\frac{1}{M_n} \sum_{m=1}^{M_n} \mathds{1}_{\{\widehat{D}_{T,n,m}\leq Z\}}\big|\big] \nonumber
\\&~~\leq \widetilde{\mathbb{E}}\big[\frac{1}{M_n} \sum_{m=1}^{M_n} \mathds{1}_{\{|R(J,n,m)-\widehat{D}_{T,n,m} | \geq |R(J,n,m)-Z|\}}\big] \nonumber
\\ &~~\leq \widetilde{\mathbb{E}}\big[\frac{1}{M_n}\sum_{m=1}^{M_n}\big( \mathds{1}_{\{|R(J,n,m)-\widehat{D}_{T,n,m} | > \varepsilon\}}+\mathds{1}_{\{ |R(J,n,m)-Z| \leq \varepsilon\}}\big)\big]. 
\end{align*}
As in \eqref{emp_prop1}, we obtain
\begin{align}\label{emp_prop3b}
\widetilde{\mathbb{E}}\big[\frac{1}{M_n} \sum_{m=1}^{M_n}\mathds{1}_{\{|R(J,n,m)-Z|\leq \varepsilon\}}\big]\rightarrow \widetilde{\mathbb{P}} ( |R(J)-Z|\leq \varepsilon),
\end{align}
and the right hand side tends to zero as $\varepsilon \rightarrow 0$ because the $\mathcal{X}$-conditional distribution of $R(J)$ is continuous, while $Z$ is $\mathcal{X}$-measurable. By \eqref{emp_prop2} we also have
\begin{align}\label{emp_prop3c}
\lim_{J \rightarrow \infty} \limsup_{n \rightarrow \infty}\widetilde{\mathbb{E}}\big[\frac{1}{M_n} \sum_{m=1}^{M_n} \mathds{1}_{\{|R(J,n,m)-\widehat{D}_{T,n,m}| > \varepsilon\}}\big] \rightarrow 0
\end{align}
for all $\varepsilon>0$. Thus, using \eqref{emp_prop3b} and \eqref{emp_prop3c}, we obtain
\begin{align}\label{emp_prop1b}
\lim_{J \rightarrow \infty} \limsup_{n \rightarrow \infty} \widetilde{\mathbb{P}} \big(\big|\frac{1}{M_n} \sum_{m=1}^{M_n} \big(\mathds{1}_{\{R(J,n,m)\leq Z\}}-
 \mathds{1}_{\{\widehat{D}_{T,n,m}\leq Z\}}\big) \big|> \varepsilon \big) =0
\end{align}
for all $\varepsilon>0$.
\\ \textit{Step 5.} The claim follows from \eqref{emp_prop1}, \eqref{emp_prop1a} and \eqref{emp_prop1b}.
\end{proof}

\begin{proof}[Proof of \eqref{estimator_jump}] Again  we will prove the result only for $\widehat{D}_{T,n,m}$. We have for arbitrary $\varepsilon>0$
\begin{align*}
&\widetilde{\mathbb{P}}\big(\widehat{Q}_{n,T}(\alpha)>Q(\alpha)+\varepsilon\big)
\\&~~=\widetilde{\mathbb{P}}\big(\frac{1}{M_n}  \sum_{m=1}^{M_n} \mathds{1}_{\{ \widehat{D}_{T,n,m}> Q(\alpha)+\varepsilon\}}>\frac{M_n-(\lfloor \alpha M_n \rfloor-1)}{M_n} \big) 
\\&~~\leq \widetilde{\mathbb{P}}\big(\frac{1}{M_n} \sum_{m=1}^{M_n} \mathds{1}_{\{ \widehat{D}_{T,n,m}> Q(\alpha)+\varepsilon\}}-Z(\alpha,\varepsilon)> (1- \alpha)-Z(\alpha,\varepsilon) \big)
\end{align*}
with $Z(\alpha,\varepsilon)=\widetilde{\mathbb{P}}\big(\widetilde{D}_T > Q(\alpha)+\varepsilon \big| \mathcal{X} \big)$. Because the $\mathcal{X}$-conditional distribution of $\widetilde{D}_T$ is continuous with a strictly positive density on $[0,\infty)$, we have $Z(\alpha,\varepsilon)<1-\alpha$ a.s. which yields
\begin{align*}
\widetilde{\mathbb{P}}\big(\widehat{Q}_{n,T}(\alpha)>Q(\alpha)+\varepsilon\big) \rightarrow 0
\end{align*}
by \eqref{emp_distr_hatD}. Analogously, we get $\widetilde{\mathbb{P}}\big(\widehat{Q}_{n,T}(\alpha)<Q(\alpha)-\varepsilon\big) \rightarrow 0$. 
\end{proof}

\subsection{Proof of Example \ref{poiss2}}
First, note that $\hat{\eta}_{n,m}^{(l_i)}(s_i)$ and $\hat{\eta}_{n,m}^{(l_j)}(s_j)$ are $\mathcal S$-conditionally independent if we are on the set $\Omega(n,s_i,s_j)$ on which $\hat{\eta}_{n,m}^{(l_i)}(s_i)$ and $\hat{\eta}_{n,m}^{(l_j)}(s_j)$ contain no common observation intervals. Without loss of generality let $s_i < s_j$. Using the Markov inequality we get 
\begin{align}\label{expoisshelp2} 
\mathbb{P}(\Omega(n,s_i,s_j)^c) &\leq \mathbb{P}\big(\tau_{n,+}^{(l_i)}(t_{i_n^{(3-l_i)}(s_i)+K_n,n}^{(3-l_i)})
\geq s_i+(s_j-s_i)/2\big) \nonumber
\\&~~~~~+\mathbb{P}\big(\tau_{n,-}^{(l_j)}(t_{i_n^{(3-l_j)}(s_j)-K_n-1,n}^{(3-l_j)})\leq s_i+(s_j-s_i)/2\big) \nonumber
\\ & \leq 2 K_{\lambda_1,\lambda_2}\frac{K_n/n}{(s_j-s_i)/2}
\end{align}
for a generic constant $K_{\lambda_1,\lambda_2}$. The latter tends to zero as $n \rightarrow \infty$ because of $|\pi_n|_T K_n \overset{\mathbb{P}}{\longrightarrow}0$ and $n |\pi_n|_T = O_{\mathbb{P}}(1)$.
Hence, we may assume $\hat{\eta}_{n,m}^{(l_i)}(s_i)$ and $\hat{\eta}_{n,m}^{(l_j)}(s_j)$ to be $\mathcal S$-conditionally independent, and it remains to prove \eqref{vert_kon1} for $J=1$. Also, we have seen in Example \ref{poiss1} that $\eta^{(l)}(s)$ follows a continuous distribution. If we establish weak convergence of the $\mathcal S$-conditional distribution of $\hat{\eta}^{(l)}_{n,1}(s)$ to the one of ${\eta}^{(l)}(s)$, then (\ref{vert_kon1}) follows from the Portmanteau theorem. 

First, we have
\begin{align*}
&\mathbb{E}\big[\exp(itn\big|\mathcal{I}_{i_n^{(3-l)}(s)+V_{n,1}^{(3-l)}(s),n}^{(3-l)}\big|) \big| \mathcal S \big]
\\&~~~=\sum_{k=-K_n}^{K_n}\big|\mathcal{I}_{i_n^{(3-l)}(s)+k,n}\big| \Big(\sum_{j=-K_n}^{K_n}\big|\mathcal{I}_{i_n^{(3-l)}(s)+j,n}\big|  \Big)^{-1}\exp(itn\big|\mathcal{I}_{i_n^{(3-l)}(s)+k,n}\big|).
\end{align*}
Except for $k=0$ the length of each observation interval is exponentially distributed, up to asymptotically negligible boundary effects, with parameter $n \lambda_{3-l}$. It follows easily that the previous expression has asymptotically the same distribution as
\begin{align*}
 \Big(\sum_{j=-K_n}^{K_n}  E_j  \Big)^{-1} \sum_{j=-K_n}^{K_n}E_j \exp(itE_j)  
\end{align*}
for i.i.d.\ exponentials $E_j$, $j=-K_n,\ldots,K_n$, with parameter $\lambda_{3-l}$. Using the law of large numbers, the $S$-conditional characteristic function of $n|\mathcal{I}_{i_n^{(3-l)}(s)+V_{n,1}^{(3-l)}(s),n}^{(3-l)}|$ converges in probability to 
\begin{align}\label{expoisshelp4}
\mathbb{E}\big[\lambda_{3-l} E_0 \exp(itE_0)  \big]&=\int_0^\infty \lambda_{3-l} x e^{itx} \lambda_{3-l} e^{- \lambda_{3-l} x} dx \nonumber
\\&= \int_0^\infty e^{itx} \frac{(\lambda_{3-l})^2}{\Gamma(2)}xe^{-\lambda_{3-l} x}\nonumber \\&=\mathbb{E}[\exp(itG)]
\end{align}
for a $\Gamma(2,\lambda_{3-l})$-distributed random variable $G$. The latter is the limit distribution of $n|\mathcal{I}_{i_n^{(3-l)}(s)+V_{n,1}^{(3-l)}(s),n}^{(3-l)}|$. 

Finally, as the observation times $t_{i,n}^{(l)}$ as well as the corresponding increments of the Brownian motion $W$ are independent of the observation times $t_{i,n}^{(3-l)}$, and because of the stationarity of increments of the Poisson process, the distribution of the $|\mathcal{I}^{(l)}_{j,n}|$ which overlap with $\mathcal{I}_{i_n^{(3-l)}(s)+V_{n,1}^{(3-l)}(s),n}^{(3-l)}$ depends only on the length of the interval and (asymptotically) not on its location. Therefore, the $\mathcal{I}_{i_n^{(3-l)}(s)+V_{n,1}^{(3-l)}(s),n}^{(3-l)}$-conditional distribution of $\hat{\eta}_{n,1}^{(l)}(s)$ is (asymptotically) completely determined by the length of the latter interval. This reasoning shows that
\begin{align*}
&\mathbb{E}\big[\exp(it \hat{\eta}_{n,1}^{(l)}(s)) \big| \mathcal S \big]
\\&~=\sum_{k=-K_n}^{K_n}\big|\mathcal{I}_{i_n^{(3-l)}(s)+k,n}\big| \big(\sum_{j=-K_n}^{K_n}\big|\mathcal{I}_{i_n^{(3-l)}(s)+j,n}\big|  \big)^{-1}\mathbb{E} \left[\exp\big(itn \eta_n^{(l)}\big(t^{(3-l)}_{i_n^{(3-l)}(s)+k,n}\big)\big)\middle| \mathcal{S} \right]
\end{align*}
has asymptotically the same distribution as
\begin{align}\label{expoisshelp3}
 \Big(\sum_{k=-K_n}^{K_n}E_k  \Big)^{-1}\sum_{k=-K_n}^{K_n}E_k\mathbb{E} \left[\exp\big(it f_{\eta(s)}(E_k,(R_{k,j})_j,(N_{k,j})_{j},P_k)\big)\middle| (E_k,(R_{k,j})_j,P_k)_k \right]
\end{align}
for i.i.d.\ exponentials $E_k$ with parameter $\lambda_{3-l}$, and where we set
\begin{align*}
f_{\eta(s)}(E_k,(R_{k,j})_j,(N_{k,j})_{j},P_k)=\sum_{j=1}^{P_k+1} (R_{k,(j)}-R_{k,(j-1)})(N_{k,j})^2
\end{align*}
with the right hand side defined as in \eqref{poiss1_representation} but with $E_1^{(3-l)}+E_2^{(3-l)}$ replaced by $E_k$. Here, the $E_k$ are independent of all other used random variables. Note that we can choose the $(f_{\eta(s)}(E_k,(R_{k,j})_j,(N_{k,j})_{j},P_k))_k$ to be independent, because the $\eta_n^{(l)}\big(t^{(3-l)}_{i_n^{(3-l)}(s)+k,n}\big)$ are asymptotically independent for large differences of $k$ by \eqref{expoisshelp2}. 
\\\eqref{expoisshelp3} then converges by the law of large numbers to
\begin{multline*}
 \mathbb{E}\left[\lambda_{3-l} E_0 \mathbb{E} \left[\exp\big(it f_{\eta(s)}(E_0,(R_{0,j})_j,(N_{0,j})_{j},P_0)\middle| E_0,(R_{0,j})_j,P_0 \right] \right]
 \\=\mathbb{E}\left[\lambda_{3-l} E_0 \exp\big(it f_{\eta(s)}(E_0,(R_{0,j})_j,(N_{0,j})_{j},P_0)\right],
\end{multline*}
which equals
\begin{align*}
\mathbb{E}\left[\exp\big(it f_{\eta(s)}(G,(R_{0,j})_j,(N_{0,j})_{j},P_0) \right]
\end{align*}
by \eqref{expoisshelp4} for a $\Gamma(2,\lambda_{3-l})$-distributed random variable $G$ because $E_0$ is independent of $(R_{0,j})_j,(N_{0,j})_{j},P_0$. Noting that $E_1^{(3-l)}+E_2^{(3-l)}\sim\Gamma(2,\lambda_{3-l})$ in \eqref{poiss1_representation} finishes the proof. \qed

\bibliography{bibliography}
\end{document}